\newtheoremstyle{mystyle}{}{}{\slshape}{2pt}{\scshape}{.}{ }{}
\newtheoremstyle{etapestyle}{}{}{\itshape}{2em}{\sffamily}{:}{ }{\thmname{#1}}
\newtheoremstyle{definitionstyle}{}{}{}{2pt}{\bfseries}{.}{ }{}
\newtheorem{thm}{Theorem}[section]
\newtheorem{prop}[thm]{Proposition}
\newtheorem{defi}[thm]{Definition}
\newtheorem{lemma}[thm]{Lemma}
\newtheorem{question}[thm]{Question}
\theoremstyle{mystyle}
\newtheorem{notation}[thm]{Notation}
\theoremstyle{remark}
\newtheorem{rem}[thm]{Remark}
\theoremstyle{etapestyle}
\theoremstyle{definitionstyle}
\newcommand{\op}{\operatorname}
\newcommand{\mc}{\mathbb{C}}
\newcommand{\mz}{\mathbb{Z}}
\newcommand{\mq}{\mathbb{Q}}
\newcommand{\mr}{\mathbb{R}}
\newcommand{\plan}{\mathcal{P}}
\newcommand{\si}{\sigma}
\newcommand{\rw}{\rightarrow}
\newcommand{\me}{\mathcal{E}}
\newcommand{\md}{\mathcal{D}}
\newcommand{\ml}{\mathcal{L}}
\newcommand{\mo}{\mathcal{O}}
\newcommand{\kb}{\bar{k}}
\newcommand{\open}{\Omega}
\newcommand{\akx}{\op{Aut}_K(X)}
\newcommand{\ep}{\si}
\newcommand{\g}{\Gamma}
\newcommand{\cat}{\mathcal{C}}
\newcommand{\gmk}{\mathbb{G}_{m,k}}
\newcommand{\zzz}{\mathbb{Z}/3\mathbb{Z}}
\newcommand{\z}{\mathbb{Z}}
\newcommand{\Xokb}{X_{0,\kb}}
\newcommand{\Gkb}{G_{\kb}}
\newcommand{\xokb}{x_{0,\kb}}
\newcommand{\kbXo}{\mathcal{K}}
\newcommand{\mf}{\mathcal{F}}
\newcommand{\nkb}{\mathcal{V}}
\newcommand{\mx}{\mathcal{X}}
\newcommand{\ka}{\mathcal{K}}
\newcommand{\gsi}{g_{\si}}
\newcommand{\gsii}{g_{\si}^{-1}}
\newcommand{\gsiii}{g_{\si^{-1}}}
\newcommand{\gsiiii}{g_{\si^{-1}}^{-1}}
\newcommand{\gtau}{g_{\tau}}
\newcommand{\mv}{\mathcal{V}}
\newcommand{\gmkb}{\mathbb{G}_{m,\kb}}
\newcommand{\slll}{SL_{3,\mathbb{C}}}
\newcommand{\mb}{\mathcal{D}}
\begin{document}
\title{Toric varieties and spherical embeddings over an arbitrary field}
\author{Mathieu Huruguen}

\date{}

\maketitle
\begin{abstract}
We are interested in two classes of varieties with group action, namely toric varieties and spherical embeddings. They are classified by combinatorial objects, called 
fans in the toric setting, and colored fans in the spherical setting. We characterize those combinatorial objects corresponding to varieties defined over an arbitrary field 
$k$. Then we provide some situations where toric varieties over $k$ are classified by Galois-stable fans, and spherical embeddings over $k$ 
by Galois-stable colored fans. Moreover, we construct an example of a smooth toric variety under a $3$-dimensional nonsplit torus over $k$ whose fan is 
Galois-stable but which admits no $k$-form. In the spherical setting, we offer an example of a spherical homogeneous space $X_0$ over $\mr$ of rank $2$ under the action of 
$SU(2,1)$ and a smooth embedding of $X_0$ whose fan is Galois-stable but which admits no $\mr$-form.
\end{abstract}

\section*{Introduction}
In the early $70$'s, Demazure (\cite{De}) gave a full classification of smooth
toric varieties under a split torus in terms of combinatorial objects which he
named fans. This classification was then extended to all toric varieties under
a split torus during the next decade (see for example \cite{Da}).

In the first part of this paper, we address the classification problem for a
nonsplit torus $T$ over a field $k$. Let $K$ be a Galois extension of $k$ which
splits $T$. Then the Galois group $\op{Gal}(K|k)$ acts on fans corresponding to
toric varieties under $T_K=T\times_k K$, and one can speak of Galois-stable
fans. The classification Theorem \ref{criterion} says, roughly speaking, that
toric varieties under a nonsplit torus $T$ are classified by Galois-stable fans
satisfying an additional condition, named $(ii)$. For a quasi-projective fan
(see Proposition \ref{quasi}) condition $(ii)$ holds. If the torus $T$ is of
dimension $2$ then every fan is quasi-projective, and then condition $(ii)$
holds. If $T$ is split by a quadratic extension, condition $(ii)$ is also
automatically satisfied, by a result of Wlodarczyk (\cite{Wl}) which asserts
that any two points in a toric variety (under a split torus) lie on a common
affine open subset. In these situations, toric varieties under $T$ are thus
classified by Galois-stable fans (Theorem \ref{two}). Condition $(ii)$ is
nonetheless necessary, as we construct an example of a three dimensional torus
$T$ over a field $k$, split by an extension $K$ of $k$ of degree $3$, and a
smooth toric variety under the split torus $T_K$ with Galois-stable fan but
which is not defined over $k$ (Theorem \ref{extor}). This provides an example of
a toric variety (under a split torus) containing three points which do not lie
on a common affine open subset.

Recently, Elizondo, Lima-Filho, Sottile and Teitler also studied toric
varieties under nonsplit tori (\cite{El}). They also obtain the statement of
Theorem \ref{two}, but do not address the problem of descent in general. Using
Galois cohomology, they are able to classify toric $k$-forms of
$\mathbb{P}^n_K$ if the extension $K$ of $k$ is cyclic and, toric smooth
surfaces in general.

In the second part of this paper, we address the classification problem for
embeddings of spherical homogeneous spaces. A homogeneous space $(X_0,x_0)$
under a connected reductive group $G$ over $k$ is called spherical if there is
a Borel subgroup $B$ of $\Gkb$ with $Bx_0$ open in $X_0(\kb)$. An embedding of
$X_0$ is a normal $G$-variety over $k$ containing $X_0$ as an open orbit. The
main difference with the toric case is the base point, introduced in order to
kill automorphisms.

The classification of spherical embeddings was obtained by Luna and Vust
(\cite{Lu}) when $k$ is algebraically closed of characteristic $0$, and
extended by Knop (\cite{Kn}) to all characteristics. The classifying objects, called colored fans,
are also of combinatorial nature.

In Section \ref{galoisaction}, we show that the Galois group $\op{Gal}(\kb|k)$
acts on those colored fans, so that we can speak of Galois-stable colored
fans. The main classification theorem is Theorem \ref{criterion2}; like Theorem
\ref{criterion} it asserts that the embeddings of $X_0$ are classified by Galois-stable 
fans satisfying an additional assumption, named $(ii)$. We provide some
situations where this condition $(ii)$ is automatically satisfied (including
the split case, which is not a part of the Luna-Vust theory), and an example of a homogeneous space $X_0$ over $\mr$, under the action of $SU(2,1)$,
and an embedding $X$ of $X_{0,\mc}$ with Galois-stable colored
fan but which is not defined over $\mr$. This gives an example of a smooth
spherical variety containing two points which do not lie on a common affine
subset.

A motivation for studying embeddings of spherical homogeneous spaces is to
construct equivariant smooth compactifications of them. In the toric case, this
construction is due to Colliot-Th\'{e}l\`{e}ne, Harari and Skorobogatov (\cite{Co}).

I'd like to thank M.Brion for his precious advice about that work and his
careful reading. I'd also like to thank E.J.Elizondo, P.Lima-Filho, F.Sottile
and Z.Teitler for communicating their work to me.

\section{Classification of toric varieties over an arbitrary field}\label{toric}
Let $k$ be a field and $\kb$ a fixed algebraic closure. We denote by $T$ a
torus defined over $k$. By a variety over $k$ we mean a separated geometrically
integral scheme of finite type over $k$. We define toric varieties under the
action of $T$ in the following way :

\begin{defi}\label{torvar}
A toric variety over $k$ under the action of $T$ is a normal $T$-variety $X$
such that the group $T(\bar{k})$ has an open orbit in $X(\kb)$ in which it acts
with trivial isotropy subgroup scheme. A morphism between toric varieties under
the action of $T$ is a $T$-equivariant morphism defined over $k$.
\end{defi}
It follows from the definition that a toric variety $X$ under the action of $T$
contains a principal homogeneous space under $T$ as a $T$-stable open subset.
We will denote it by $X_0$.
\begin{defi}
We will say that $X$ is split if $X_0$ is isomorphic to $T$, that is to say, if
$X_0$ has a $k$-point.
\end{defi}
\begin{rem}\label{auto}
If $X$ is a split toric variety, then the automorphism group of $X$ is the
group $T(k)$.
\end{rem}
In the rest of this section, we classify the toric varieties under the action
of $T$ (up to isomorphism). Assuming first that the torus $T$ is split, we
recall how the classification works in terms of combinatorial data named fans
(Section \ref{splitt}). In Section \ref{forms}, we derive the general case from
the split case. We show (Theorem \ref{criterion}) that toric varieties under
the action of $T$ are, roughly speaking, classified by Galois-stable fans
satisfying an additional assumption. In Section \ref{applications}, we provide
some situations where this additional assumption is always satisfied, and an
example where it is not.
\subsection{The split case}\label{splitt}
In this section, we assume that the torus $T$ is split, and give the
classification of toric varieties under the action of $T$. This classification
was obtained by Demazure in the case of smooth toric varieties, and by many
other people in the general case. See \cite{Da} for more details and proofs.
\begin{prop}
Every toric variety under the action of $T$ is split.
\end{prop}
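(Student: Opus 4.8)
The plan is to recognize the principal homogeneous space $X_0$ as a torsor under $T$ whose isomorphism class is governed by a first Galois cohomology set, and then to kill that class using Hilbert's Theorem 90.

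First I would make explicit that $X_0$ is a $T$-torsor over $k$. By Definition \ref{torvar} the group $T(\kb)$ acts on $X(\kb)$ with an open orbit and trivial isotropy; this open orbit is exactly $X_0(\kb)$, and it is $T(\kb)$-equivariantly isomorphic to $T(\kb)$ acting on itself by translation. Thus $X_0\times_k\kb$ is equivariantly isomorphic to $T_{\kb}$, i.e. $X_0$ is a principal homogeneous space (torsor) under $T$ that is trivialized over $\kb$. The content of the proposition is then to show this torsor is already trivial over $k$, equivalently that $X_0(k)\neq\emptyset$.

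Next I would invoke the classification of such torsors: isomorphism classes of $T$-torsors split by $\kb$ are in bijection with the pointed cohomology set $H^1(\op{Gal}(\kb|k),T(\kb))$, the trivial class corresponding precisely to torsors possessing a $k$-point. Since $T$ is split, I fix an isomorphism $T\cong\gm^n$ over $k$, which induces an isomorphism $H^1(\op{Gal}(\kb|k),T(\kb))\cong H^1(\op{Gal}(\kb|k),\kb^{\times})^n$. By Hilbert's Theorem 90 the group $H^1(\op{Gal}(\kb|k),\kb^{\times})$ vanishes, so the whole set is trivial; hence the class of $X_0$ is trivial, $X_0\cong T$, and $X$ is split.

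The argument is short, and the only place demanding care is the first step, where one must verify at the level of schemes (not merely on $\kb$-points) that $X_0$ is genuinely a $T$-torsor and that its isomorphism class lands in the $H^1$ described above. Once this identification is in hand there is no real obstacle: the vanishing of $H^1(\op{Gal}(\kb|k),\kb^{\times})$ via Hilbert 90 immediately forces triviality, and the hypothesis that $T$ is split enters only to reduce the computation to the case of $\gm$.
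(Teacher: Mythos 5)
Your proof is correct and is essentially the paper's own argument: the paper notes that $X_0$ is a principal homogeneous space under $T$ and disposes of it in one line by Hilbert's Theorem 90, which is exactly your reduction (via the splitting $T\cong\gm^n$) to the vanishing of $H^1(\op{Gal}(\kb|k),\kb^{\times})$. You have simply made explicit the torsor identification and the cohomological classification that the paper leaves implicit.
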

\begin{proof}
By Hilbert's $90$ theorem, every principal homogeneous space under $T$ has a
$k$-point.
\end{proof}
In order to state the main theorem of this section, we need more notations and
definitions.
\begin{notation}
We denote by $M$ the character lattice of $T$, and by $N$ the lattice of one
parameter subgroups, which is dual to $M$. We call $V$ the $\mq$-vector space
$N\otimes_{\mz}\mq$, and $V^*=M\otimes_{\mz}\mq$ its dual.
\end{notation}
By a cone in $V$, we mean the $\mq^+$-linear span of finitely many elements of
$V$. We say that a cone is strictly convex if it contains no line.
\begin{notation}
 If $\cat$ is a cone in $V$, we denote by $\cat^{\vee}\subseteq V^*$ its dual cone, and by $\op{Int}(\cat)$ its relative interior.
\end{notation}
Recall the classical :
\begin{defi}
A fan in $V$ is a finite collection $\me$ of strictly convex cones in $V$
satisfying :
\begin{itemize}
\item Every face of $\cat\in\me$ belongs to $\me$.
\item The intersection of two cones in $\me$ is a face of each.
\end{itemize}
\end{defi}
Let us now recall how a fan can be associated to a toric variety under the
action of $T$. Fix a toric variety $X$ under $T$. If $x\in X(\kb)$, the orbit
of $x$ under $T(\kb)$ is actually defined over $k$. For simplicity, we will
denote by $Tx$ this orbit. If $\omega$ is a $T$-orbit, the open subset 
$$X_{\omega}:=\{x\in X(\kb),\quad \omega\subseteq \overline{Tx}\}$$
is affine and defined over $k$. Fix $x \in X_0(k)$. One can show that the subset
$$\{\lambda\in N,\quad \op{lim}_{t\rw 0} \lambda(t)x \text{ exists in }X \text{ and belongs to} X_{\omega}\}$$
of $N$ is a finitely generated monoid whose $\mq^+$-linear span $\cat_{\omega}$ is a
strictly convex cone in $V$. Observe that the cone $\cat_{\omega}$ does not depend on the point
$x\in X_0(k)$.
 \begin{thm}
By mapping a toric variety $X$ to the collection 
$$\{\cat_{\omega},\quad \omega\subseteq X\text{ is a }T\text{-orbit}\},$$
one gets a bijection between (isomorphism classes of) toric varieties under $T$
and fans in $V$. We will denote by $\me_X$ the fan associated to the toric
variety $X$, and by $X_{\me}$ the toric variety associated to the fan $\me$.
\end{thm}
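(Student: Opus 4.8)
The plan is to reduce everything to the affine case and then glue. First I would set up the local dictionary: to a strictly convex cone $\cat$ in $V$ I associate the affine scheme $U_\cat := \spec k[\cat^\vee\cap M]$. By Gordan's lemma the monoid $\cat^\vee\cap M$ is finitely generated, so $U_\cat$ is affine of finite type; since $\cat$ is strictly convex, $\cat^\vee$ spans $V^*$ and $\cat^\vee\cap M$ generates $M$ as a group, so $T=\spec k[M]$ embeds as an open orbit and makes $U_\cat$ an affine toric variety. Normality of $U_\cat$ follows because $\cat^\vee\cap M$ is saturated in $M$. Conversely I would show that every affine toric variety arises this way, and that the cone recovered from $U_\cat$ by the limit construction of the statement is again $\cat$: a one-parameter subgroup $\lambda\in N$ extends as $t\rw 0$ on $U_\cat$ precisely when $\langle\lambda,m\rangle\ge 0$ for all $m\in\cat^\vee\cap M$, that is, when $\lambda\in(\cat^\vee)^\vee=\cat$.

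Next I would analyse the orbit structure of the affine model. The $T$-orbits in $U_\cat$ are in order-reversing bijection with the faces $\tau\preceq\cat$; the orbit $\omega_\tau$ is the one whose associated cone $\cat_{\omega_\tau}$ equals $\tau$, and the open subset $(U_\cat)_{\omega_\tau}$ of the statement is exactly $U_\tau$. In particular, faces of a cone give open inclusions $U_\tau\hookrightarrow U_\cat$, and the limit computation above identifies $\cat_\omega$ with the correct face. This is the step that matches the combinatorial data to the geometry.

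For the global statement I would glue in both directions. Given a fan $\me$, I form $X_\me$ by gluing the $U_\cat$ for $\cat\in\me$ along the common opens $U_{\cat\cap\cat'}$, which make sense because $\cat\cap\cat'$ is a face of each. The nontrivial point is separatedness: $X_\me$ is separated iff for every pair $\cat,\cat'$ the multiplication map $k[\cat^\vee\cap M]\otimes k[(\cat')^\vee\cap M]\rw k[(\cat\cap\cat')^\vee\cap M]$ is surjective, and this surjectivity is exactly the fan axiom that $\cat\cap\cat'$ is a common face (by the separating-hyperplane lemma). Conversely, given a toric variety $X$, I would cover it by $T$-stable affine opens using Sumihiro's theorem (applicable since $X$ is normal), recognise each piece as some $U_\cat$ via the affine dictionary, and check that the resulting cones $\cat_\omega$ form a fan: the face condition comes from the affine orbit analysis, and the intersection-is-a-face condition comes from separatedness of $X$. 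The two assignments $X\mapsto\me_X$ and $\me\mapsto X_\me$ are then mutually inverse.

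The main obstacle is the passage from a single orbit to the whole variety, namely producing a $T$-stable affine cover and identifying the global gluing data; this rests on Sumihiro's theorem, hence on the normality hypothesis of Definition \ref{torvar}, together with the precise equivalence between separatedness of $X$ and the ``intersection is a face'' fan axiom. The purely local computations (Gordan's lemma, saturation, and the limit criterion) are routine once this framework is in place.
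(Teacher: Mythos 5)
The paper does not actually prove this theorem: it is quoted as classical, with the proof delegated to Danilov's survey \cite{Da}, and the only argument sketched in the text is the fan-to-variety direction by gluing the affine charts $\operatorname{Spec}(k[\cat^{\vee}\cap M])$ along $X_{\cat}\cap X_{\cat'}=X_{\cat\cap\cat'}$ --- which is exactly your gluing step. Your proposal is the standard proof found in that cited literature (Gordan's lemma and the affine dictionary, the orbit--cone correspondence, separatedness of the glued scheme being equivalent to the intersection-is-a-face axiom, and Sumihiro's theorem producing the $T$-stable affine cover for the converse, valid here since the torus is split --- indeed the paper itself records that each $X_{\omega}$ is affine and defined over $k$), so it is correct in outline and coincides with the intended argument.
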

To construct a toric variety under $T$ out of a fan $\me$ in $V$, one proceeds as
follows. If $\me$ contains only one maximal cone $\cat$, then the variety
$X_{\me}$ is $$X_{\me}=\op{Spec}(k[\cat^{\vee}\cap M]).$$ Observe that
$X_{\me}$ is affine. In the general case, one glues the toric varieties
$(X_{\cat})_{\cat\in \me}$ along their intersections : $X_{\cat}\cap
X_{\cat'}=X_{\cat\cap\cat'}$.

The following proposition enables us to detect the quasi-projectivity of a
toric variety by looking at its fan :
\begin{prop}\label{quasi}
Let $\me$ be a fan in $V$. The variety $X_{\me}$ is quasi-projective if and
only if there exists a family of linear forms $(l_{\cat})_{\cat\in\me}$ on $V$
satisfying the following conditions :
\begin{itemize}
\item $\forall \quad\cat,\cat'\in\me,\quad
l_{\cat}=l_{\cat'}\text{ over } \cat\cap\cat' .$
\item $\forall\quad \cat,\cat'\in\me, \quad \forall x\in \op{Int}(\cat),\quad l_{\cat}(x)>l_{\cat'}(x).$
\end{itemize}
In this situation, we say that the fan $\me$ is quasi-projective.
\end{prop}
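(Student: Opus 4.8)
The plan is to deduce Proposition \ref{quasi} from the classical dictionary between $T$-linearized line bundles on $X_{\me}$ and piecewise-linear support functions on the fan, under which the two displayed conditions become precisely the criterion for ampleness. Throughout I use that $X_{\me}$ is of finite type and separated over $k$, so that it is quasi-projective if and only if it carries an ample line bundle.

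First I would set up the reduction to linearized bundles. On a toric variety every line bundle is $T$-linearizable: a standard exact sequence (for the connected group $T$) shows that the cokernel of the forgetful map $\op{Pic}^{T}(X_{\me})\to\op{Pic}(X_{\me})$ injects into $\op{Pic}(T)=0$, and adding an equivariant structure does not alter the underlying bundle, hence does not affect ampleness. Therefore $X_{\me}$ is quasi-projective if and only if it carries a $T$-linearized ample line bundle. Next I invoke the standard identification of $T$-linearized line bundles with continuous support functions $h\colon|\me|\to\mr$ that are $M$-linear on each cone; writing $h|_{\cat}=l_{\cat}$ yields a family of linear forms, and the requirement that $h$ be single-valued and continuous is exactly the first condition $l_{\cat}=l_{\cat'}$ on $\cat\cap\cat'$. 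Since ampleness of $L$ is equivalent to ampleness of a positive power, I may work with $\mq$-coefficients, which is why the $l_{\cat}$ are allowed to be arbitrary linear forms on $V$; conversely, clearing a common denominator makes a rational family integral without disturbing either condition. It suffices to treat the maximal cones, the forms on their faces being determined by restriction.

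The heart of the matter is then that a $T$-linearized bundle $L_{h}$ is ample if and only if $h$ is strictly convex, i.e. if and only if the second condition holds. For the implication from the second condition to ampleness I argue concretely: after clearing denominators the condition says that on $\op{Int}(\cat)$ the form $l_{\cat}$ strictly dominates every other $l_{\cat'}$, so that $h=\max_{\cat}l_{\cat}$ and $l_{\cat}$ is realized as $h$ exactly on $\cat$. Translating into sections, the global section of $L_{h}$ attached to $l_{\cat}$ has non-vanishing locus precisely the affine chart $X_{\cat}$; these principal open sets are affine and cover $X_{\me}$, which by the standard affine-cover criterion forces $L_{h}$ to be ample, so $X_{\me}$ is quasi-projective. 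For the converse I would start from an ample $L_{h}$ and recover strict convexity by restricting to the orbit closures attached to the walls $\tau=\cat\cap\cat'$: positivity of $L_{h}$ along the complete such closures reads off the strict inequality between $l_{\cat}$ and $l_{\cat'}$ across each wall, and convexity propagates this strict separation to all pairs of cones demanded by the second condition.

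I expect the delicate step to be this converse, extracting strict convexity from ampleness; the clean route passes through the restriction of $L_{h}$ to the orbit closures of the walls, and one must handle uniformly the interior walls, which give complete closures on which positive degree yields the convexity inequality, and the walls on the boundary of $|\me|$, whose closures are not complete. A secondary but minor point is that we work over an arbitrary field $k$ with $T$ split rather than over $\kb$; since the combinatorial construction of $X_{\me}$, the support functions, and the distinguished sections are all defined over $k$, and since ampleness may be tested after the faithfully flat base change to $\kb$, the entire argument descends to $k$ unchanged.
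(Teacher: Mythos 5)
The paper itself gives no proof of Proposition \ref{quasi}: it is recalled from the literature (Danilov, \cite{Da}), so your proposal can only be measured against the classical argument. Your general frame is the right one (quasi-projective $\Leftrightarrow$ ample line bundle, $T$-linearization, support functions), and your descent remarks at the end are fine; but one direction of your proof has a real gap.

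On the forward direction (family $\Rightarrow$ quasi-projective): this is essentially the standard argument and it works, with one caveat. The statement must be read for all pairs $\cat\not\subseteq\cat'$ (as literally written it is vacuously false for nested pairs), and your reduction ``it suffices to treat the maximal cones'' discards exactly the part of the hypothesis you then use. Your key claim, that the section attached to $l_{\cat}$ has non-vanishing locus \emph{precisely} $X_{\cat}$, i.e.\ that $\{h=l_{\cat}\}$ meets the support of $\me$ exactly in $\cat$, depends on the inequalities in which the \emph{first} cone is not maximal. Concretely, let $\me_0$ be the fan in $\mq^2$ with maximal cones $\cat_1=\op{Cone}(e_1,e_2)$ and $\cat_2=\op{Cone}(-e_1,-e_2)$. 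The forms $l_{\cat_1}=0$, $l_{\cat_2}=-x_1$ satisfy both conditions for the pair of maximal cones; yet the resulting $h$ admits no extension to a full family (the form of $\op{Cone}(-e_2)$ must vanish on that ray, while the pair $(\op{Cone}(-e_2),\cat_1)$ requires it to be $>0$ there), and indeed the section your argument attaches to $l_{\cat_1}$ has non-vanishing locus $X_{\me_0}\setminus D_-$, where $D_-$ is the divisor of $\op{Cone}(-e_1)$: this strictly contains $X_{\cat_1}$ and is not affine. So the affine-cover criterion fails unless the full hypothesis is used; used in full, your argument is correct.

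The genuine gap is in the converse (quasi-projective $\Rightarrow$ family). Your route, positivity of $L_h$ on the orbit closures of interior walls followed by ``convexity propagates this strict separation to all pairs'', is the standard proof for \emph{complete} fans, and both steps break down in general. First, a fan can have several maximal cones and no interior wall at all: in $\me_0$ above no cone is a common facet of two maximal cones, so your argument yields no inequality whatsoever, whereas ampleness does impose nontrivial ones: the $h$ above corresponds to the bundle $\mo(D_-)$, which is not ample because it is trivial on the complete curves inside $X_{\me_0}\setminus(D_-\cup D_+)\simeq\gm\times\proj^1$ ($D_+$ the divisor of $\op{Cone}(e_1)$), and these curves are not orbit closures of any cone. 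Second, the ``propagation'' of local wall-crossing convexity to arbitrary pairs of cones uses completeness (connectedness through walls of the full-dimensional cones), which is unavailable here. Third, for maximal cones of dimension $<\dim V$ the linear form is not determined by the support function at all, and its existence must be proved; walls say nothing about this. Note that this is not a marginal case for the present paper: the fan of Theorem \ref{extor} has three maximal cones pairwise meeting only at the origin, and Proposition \ref{strictconv} invokes precisely the implication ``quasi-projective $\Rightarrow$ family exists'' for such a fan. A correct proof of the converse runs along different lines: from an ample $L$ one extracts a finite-dimensional $T$-stable subspace $W\subseteq\Gamma(X_{\me},L^{\otimes n})$ giving a $T$-equivariant locally closed immersion $X_{\me}\to\proj(W^*)$ (sections form a rational $T$-module, and extra sections only refine an immersion); the normalization of the closure is a projective toric variety, giving a complete fan $\overline{\me}\supseteq\me$ carrying an ample linearized extension of $L^{\otimes n}$; one applies the complete case to $\overline{\me}$ (where your wall argument is valid) and then restricts the strictly convex family to $\me$, perturbing by supporting linear forms of faces to recover the strict inequalities that restriction destroys and to produce the forms of the low-dimensional cones.
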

\begin{rem}\label{deux}
Every two-dimensional fan is quasi-projective.
\end{rem}
\begin{rem}\label{deuxprime}
If the fan $\me$ has one maximal cone, then it is quasi-projective because
$X_{\me}$ is affine. If $\me$ has two maximal cones, then it is also
quasi-projective. Indeed, let $l\in V^*$ be positive on the first maximal cone
$\cat_1$, and negative on the second $\cat_2$. Putting $l_{\cat_1}=l$ and
$l_{\cat_2}=0$ one gets the result.
\end{rem}

\subsection{Forms of a split toric variety}\label{forms}
In this section, we go back to the general setting ($T$ is not necessarily
split). We fix a finite Galois extension $K$ of $k$ with Galois group $\Gamma$
such that the torus $T_K$ is split. The notations $M,N,...$ will refer to the
corresponding objects associated to $T_K$ in Section \ref{splitt}. These
objects are equipped with an action of the group $\Gamma$.

Fix a toric variety $X$ under $T_K$. We address the following problem :
\begin{question}\label{gen}
Does $X$ admit a $k$-form?
\end{question}
By a $k$-form of $X$, we mean a toric variety $Y$ under the action of $T$, such
that $Y_K\simeq X$ as $T_K$-varieties. Let $F_X$ be the set of isomorphism
classes of $k$-forms of $X$.  We denote by $\open$ the open orbit of $T_K$ in
$X$, and define $F_{\open}$ similarly. By definition, $F_{\open}$ is the set of
principal homogeneous spaces under $T$ which become trivial under $T_K$. By
mapping a $k$-form of $X$ to the principal homogeneous space that it contains,
one obtains a natural map 
$$\delta_X : F_X\rw F_{\open}$$
\begin{question}\label{part}
What can be said about the map $\delta_X$ ?
\end{question}
Theorem \ref{criterion} will give an answer to Questions \ref{gen} and
\ref{part}. We will obtain a criterion involving the fan $\me_X$ for the set
$F_X$ to be nonempty, and show that if this criterion is satisfied, the map
$\delta_X$ is a bijection. As usual in Galois descent issues, semi-linear
actions on $X$ respecting the ambient structure turn out to be very helpful.
\begin{defi}
An action of $\Gamma$ on $X$ is called toric semi-linear if for every
$\si\in\Gamma$ the diagrams 
$$\begin{CD}
X @>\si>>  X \\
@VVV  @VVV \\
\op{Spec}(K) @>\si>>  \op{Spec}(K)
\end{CD},\quad \begin{CD}
T_K\times X @>>>  X \\
@VV(\si,\si)V      @VV\si V \\
T_K\times X @>>>  X
\end{CD}$$
are commutative. The group $\akx=T(K)$ acts by conjugacy on the whole set of toric semi-linear
actions of $\Gamma$ on $X$. We denote by $E_X$ the set of conjugacy classes of
toric semi-linear actions of $\Gamma$ on $X$.
\end{defi}
If $Y$ is a $k$-form of $X$ and if one fixes a $T_K$-equivariant isomorphism
$X\rw Y_K$, then one can let the group $\Gamma$ act on $X$. Replacing the
isomorphism $X\rw Y_K$ by another one, one obtains a $T(K)$-conjugated toric
semi-linear action. This proves that there is a natural map 
$$\alpha_X : F_X\rw E_X$$
The following proposition is part of the folklore and can be found for example
in \cite{El}.
\begin{prop}\label{classical}
The map $\alpha_X$ is injective. A toric semi-linear action on $X$ is in the
image of $\alpha_X$ if and only if the quotient $X/\Gamma$ exists, or, in other
words, if and only if one can cover $X$ by $\Gamma$-stable quasi-projective
subsets.
\end{prop}
\begin{rem}\label{ez}
If the variety $X$ itself is quasi-projective, the map $\alpha_X$ is thus
bijective.
\end{rem}
\begin{prop}
The map $\alpha_{\open}$ is bijective. Otherwise stated, a principal
homogeneous space under $T$ which becomes trivial under $T_K$ is characterized
by the toric semi-linear action it induces on $\open$.
\end{prop}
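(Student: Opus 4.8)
The plan is to reduce the bijectivity of $\alpha_{\open}$ to the general descent result of Proposition \ref{classical}, exploiting the special geometry of the open orbit. The key observation is that $\open$, being the open orbit of the split torus $T_K$ in $X$, is $T_K$-equivariantly isomorphic to $T_K$ itself: indeed $X$ is a toric variety under the split torus $T_K$, hence split by the classification recalled in Section \ref{splitt}, so its open orbit is a trivial principal homogeneous space. In particular $\open$ is an affine variety, and therefore quasi-projective.

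First I would record the injectivity of $\alpha_{\open}$, which is immediate from Proposition \ref{classical} applied to the variety $\open$. For surjectivity I would invoke the criterion in that same proposition: a toric semi-linear action of $\Gamma$ on $\open$ lies in the image of $\alpha_{\open}$ precisely when $\open$ can be covered by $\Gamma$-stable quasi-projective subsets. Since $\open\simeq T_K$ is affine, it is quasi-projective and $\Gamma$-stable as a whole, so this covering condition is trivially satisfied for every toric semi-linear action, and $\alpha_{\open}$ is surjective. Equivalently, and more directly, one may appeal to Remark \ref{ez}, which asserts that $\alpha_Z$ is bijective whenever the variety $Z$ is quasi-projective, and apply it to $Z=\open$.

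The only point requiring care is the geometric input, namely that the open orbit of a split toric variety is the torus itself and hence affine; but this follows at once from the fact that every toric variety under a split torus is split, together with the trivial quasi-projectivity of an affine scheme. I do not expect a genuine obstacle here: the entire content of the statement is carried by Proposition \ref{classical}, and the affineness of $\open$ makes the covering hypothesis of that proposition vacuous. I would then conclude that $\alpha_{\open}$ is both injective and surjective, hence bijective, which is exactly the assertion that a principal homogeneous space under $T$ trivialized by $T_K$ is determined, up to isomorphism, by the toric semi-linear action it induces on $\open$.
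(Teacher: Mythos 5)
Your proposal is correct and follows exactly the paper's argument: the paper's proof is the one-line "Use Proposition \ref{classical} and observe that $\open$ is affine," which is precisely your reduction (injectivity from Proposition \ref{classical}, surjectivity because affineness of $\open\simeq T_K$ makes the $\Gamma$-stable quasi-projective covering condition vacuous). Your extra justification that $\open$ is a trivial torsor, hence affine, is a harmless elaboration of the same idea.
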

\begin{proof}
Use Proposition \ref{classical} and observe that $\open$ is affine.
\end{proof}
\begin{rem}
The set $F_{\open}$ is naturally the Galois cohomology set $H^1(\Gamma,T(K))$.
\end{rem}
The following proposition (also obtained in \cite{El}) enables us to see very
easily whether the set $E_X$ is empty or not, by looking at the fan $\me_X$.
\begin{prop}\label{res}
The set $E_X$ is nonempty if and only if the fan $\me_X$ is $\Gamma$-stable in
the sense that, for every cone $\cat\in\me_X$, and for every $\si\in\Gamma$,
the cone $\si(\cat)$ still belongs to $\me_X$. In this case, the restriction
map $E_X\rw E_{\open}$ is a bijection.
\end{prop}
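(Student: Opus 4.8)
The plan is to prove both assertions of Proposition \ref{res} by unwinding what a toric semi-linear action of $\Gamma$ on $X$ does to the combinatorial data, using the bijection between toric varieties and fans from the split case.

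First I would establish the necessity and the construction of a semi-linear action in a single argument. Suppose $E_X$ is nonempty, so there is a toric semi-linear action of $\Gamma$ on $X$. For $\si\in\Gamma$, the map $\si\colon X\to X$ is semi-linear over the corresponding map on $\spec(K)$, and it is compatible with the $T_K$-action twisted by $\si$. The key point is to check that such a $\si$ carries the orbit $T_K\omega$-stratification to itself and, at the level of the associated monoids in $N$, induces precisely the linear action of $\si$ on $N=N(T_K)$. Concretely, if $\omega$ is a $T_K$-orbit with cone $\cat_\omega$, then $\si(\omega)$ is again a $T_K$-orbit, and by chasing the definition of $\cat_\omega$ through the limit $\op{lim}_{t\to 0}\lambda(t)x$ together with the twisted equivariance in the second commutative diagram, one finds $\cat_{\si(\omega)}=\si(\cat_\omega)$. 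Since the collection of all $\cat_\omega$ is exactly $\me_X$, this shows $\me_X$ is $\Gamma$-stable. Conversely, if $\me_X$ is $\Gamma$-stable, then for each $\si$ the toric variety $X_{\si(\me_X)}=X_{\me_X}=X$ is identified with $X$ via the functoriality of the fan construction, and this identification is exactly a semi-linear self-map of $X$ realizing the $\Gamma$-action; one checks the cocycle-type compatibility so that these assemble into a genuine toric semi-linear action, giving an element of $E_X$. I would make the affine case explicit first—where $X_\cat=\spec(K[\cat^\vee\cap M])$ and $\si$ acts through its action on $M$—and then note the general case glues because $\si$ respects the intersections $X_\cat\cap X_{\cat'}=X_{\cat\cap\cat'}$.

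For the second assertion, once $\me_X$ is $\Gamma$-stable I would analyze the restriction map $E_X\to E_{\open}$. Injectivity should follow because the open orbit $\open$ is dense in $X$: a toric semi-linear action is determined by a semi-linear map, and two such maps agreeing on the dense open $\open$ must coincide, and this passes to $T(K)$-conjugacy classes. For surjectivity, given a toric semi-linear action on $\open$ I would extend it to $X$ by using that the extra structure is entirely combinatorial: the action on $\open$ fixes a semi-linear automorphism of the torsor, hence determines the induced linear action of $\Gamma$ on $N$, and since $\me_X$ is $\Gamma$-stable this linear action permutes the cones and therefore extends canonically over each affine chart $X_\cat$ and glues. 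The extension is unique up to conjugacy, which is what matches injectivity.

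The main obstacle I expect is the careful verification that the combinatorial action and the geometric semi-linear action match up with the correct twist—that is, proving $\cat_{\si(\omega)}=\si(\cat_\omega)$ rigorously and checking that the twisted equivariance diagram (with $(\si,\si)$ on $T_K\times X$) is exactly what forces the induced map on $N$ to be $\si$ rather than, say, its inverse or identity. A secondary delicate point is the gluing: one must ensure that the semi-linear maps defined chart-by-chart are compatible on overlaps and satisfy the multiplicativity $(\si\tau)$-relation up to the right conjugacy, so that they define an action of the group $\Gamma$ and not merely a family of maps. Once the normalization of the $N$-action is pinned down, the remaining gluing and density arguments are routine.
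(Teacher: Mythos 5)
Your proposal is correct and follows the same overall structure as the paper's proof: the forward implication by chasing the limit definition of $\cat_\omega$ through the twisted equivariance diagram to get $\si(\cat_\omega)=\cat_{\si(\omega)}$, the converse by building the semi-linear action chart-by-chart from the action of $\si$ on $M$ and gluing, and injectivity of the restriction map from density of $\open$ in $X$. The one place where you genuinely diverge is surjectivity of $E_X\rw E_{\open}$: the paper fixes a reference toric semi-linear action on $X$ (which exists by the first part), observes that for any action $*$ on $\open$ the composite $x\mapsto \si^{-1}*(\si(x))$ is a toric automorphism of $\open$, hence multiplication by an element of $T(K)$, and concludes because such multiplications extend to $X$; you instead extend $*$ directly over each chart $X_{\cat}\rw X_{\si(\cat)}$, using that the comorphism sends each character $\chi^m$ to a scalar multiple of $\chi^{\si^{-1}(m)}$, so $\Gamma$-stability of the fan is exactly what makes the target chart exist. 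Both arguments work; the paper's comparison trick is slicker in that it reuses the chart computation already done in the first part and needs no further bookkeeping of the translation twist, while your direct extension is self-contained but obliges you to carry that twist (the image of a base point under $*$) through the comorphism and to reverify the cocycle identity and gluing on overlaps, which, as you note, then follow from density and separatedness.
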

\begin{rem}
 The open orbit $\open$ is easily seen to be $\Gamma$-stable for any toric semi-linear action
of $\Gamma$ on $X$. By mapping such an action to its restriction to $\open$ one
gets what we call the restriction map $E_X\rw E_{\open}$.
\end{rem}

\begin{proof}
Assume first that the set $E_X$ is non empty. The variety $X$ is thus endowed
with a toric semi-linear action of $\Gamma$. Let $\omega$ be an orbit of $T_K$
on $X$, and $\si$ be an element of $\Gamma$. Let $x$ be a $K$-point in $\open$
(it exists because $T_K$ is split). One has 
$$\si(\cat_{\omega})\cap N=\{\si(\lambda),\quad \lambda\in N,\quad \op{lim}_{t\rw 0} \lambda(t)x \text{ exists in }X \text{ and belongs to
} X_{\omega}\}.$$ Thus,  
$$\si(\cat_{\omega})\cap N=\{\lambda\in N,\quad \op{lim}_{t\rw 0} \lambda(t)\si(x) \text{ exists in }X \text{ and belongs to
} X_{\si(\omega)}\}.$$ But the $K$-point $\si(x)$ belongs to $\open$, showing
that $\si(\cat_{\omega})=\cat_{\si(\omega)}.$ This proves that $\me_X$ is
stable under the action of $\Gamma$.

Assume now that $\me_X$ is $\Gamma$-stable. Let $\cat$ be a cone in $\me$ and
$\si$ an element of $\Gamma$. The linear map $\si$ on $M$ gives a morphism of
monoids 
$$\si(\cat)^{\vee}\cap M\rw \cat^{\vee}\cap M $$
and then a semi-linear morphism of $K$-algebras 
$$K[\si(\cat)^{\vee}\cap M]\rw K[\cat^{\vee}\cap M],$$
inducing a morphism of varieties 
$$U_{\cat}\rw U_{\si(\cat)}$$
which respects the toric structures on both sides. These morphisms patch
together, and enable us to construct the desired toric semi-linear action on
$X$. This completes the proof of the first point.

Suppose from now on that the set $E_X$ is non empty, and fix a toric
semi-linear action of $\Gamma$ on $X$. Denote by $*$ a toric semi-linear action
of $\Gamma$ on $\open$. Then, for all $\si\in \Gamma$, the morphism 
$$\open\rw\open$$
$$x\mapsto \si^{-1}*(\si(x))$$
is a toric automorphism of $\open$, that is, the multiplication by an element
of $T(K)$. But such a multiplication extends to $X$, proving that the
semi-linear action $*$ of $\Gamma$ extends to $X$. In other words, the
restriction map is surjective. But it is also injective because $\open$ is open
in $X$. This completes the proof of the proposition.
\end{proof}
\begin{rem}\label{rem}
By the previous arguments, one sees that if $\me$ is $\Gamma$-stable, if
$\omega$ is an orbit of $T_K$ in $X$, and $\si$ an element of $\Gamma$, the
notation $\si(\omega)$ makes sense, and does not depend of the chosen toric
semi-linear action of $\Gamma$ on $X$. Moreover, one sees that for every toric
semi-linear action of $\Gamma$ on $X$, and for every cone $\cat\in\me$,
$\si(X_{\cat})=X_{\si(\cat)}$.
\end{rem}
We are now able to answer Questions \ref{gen} and \ref{part} :
\begin{thm}\label{criterion}
The set $F_X$ is non empty if and only if the two following conditions are satisfied :
\begin{enumerate} [(i)]
\item The fan $\me_X$ is $\Gamma$-stable.
\item For every cone $\cat\in\me_X$, the fan
consisting of the cones $(\si(\cat))_{\si\in\Gamma}$ and their faces is
quasi-projective.
\end{enumerate}
In that case, the map $\delta_X$ is bijective. Otherwise stated, for every
principal homogeneous space $X_0$ under $T$, there is a unique $k$-form of $X$
containing $X_0$, up to isomorphism.
\end{thm}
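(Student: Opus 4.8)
The plan is to reduce the whole statement to the bijections already at hand and to identify $\delta_X$ with the restriction of semi-linear actions. We have the injection $\alpha_X\colon F_X\rw E_X$ (Proposition \ref{classical}), the restriction map $E_X\rw E_{\open}$, which is a bijection as soon as $\me_X$ is $\Gamma$-stable (Proposition \ref{res}), and the bijection $\alpha_{\open}\colon F_{\open}\rw E_{\open}$. Unwinding the definitions, sending a $k$-form $Y$ to the principal homogeneous space it contains is, at the level of semi-linear actions, exactly restriction to $\open$, since $\open$ is the base change of that homogeneous space inside $X\simeq Y_K$; hence $\delta_X=\alpha_{\open}^{-1}\circ(\mathrm{res})\circ\alpha_X$ whenever $E_X\neq\emptyset$. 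So the theorem follows once I show that under $(i)$ and $(ii)$ the map $\alpha_X$ is surjective (hence bijective), and conversely that the existence of a single $k$-form already forces $(i)$ and $(ii)$.

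For sufficiency, assume $(i)$ and $(ii)$. For each cone $\cat\in\me_X$ let $\me_{\cat}$ be the subfan made of the cones $(\si(\cat))_{\si\in\Gamma}$ and their faces; by $\Gamma$-stability this is a $\Gamma$-stable subfan, and by Remark \ref{rem} the open subvariety $X_{\me_{\cat}}$ is $\Gamma$-stable for every toric semi-linear action. Condition $(ii)$ says precisely that $\me_{\cat}$ is quasi-projective, so each $X_{\me_{\cat}}$ is a $\Gamma$-stable quasi-projective open subset; since $X_{\cat}\subseteq X_{\me_{\cat}}$ and the $X_{\cat}$ (for maximal $\cat$) cover $X$, these $X_{\me_{\cat}}$ cover $X$. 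By Proposition \ref{classical} every toric semi-linear action then lies in the image of $\alpha_X$, so $\alpha_X$ is bijective; composing with the two remaining bijections shows $\delta_X$ is bijective, and in particular $F_X\neq\emptyset$ because $F_{\open}=H^1(\Gamma,T(K))$ contains the trivial class.

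For necessity, suppose $F_X\neq\emptyset$ and fix a $k$-form $Y$, writing $\pi\colon X\simeq Y_K\rw Y$ for the base-change morphism, which is finite \'etale of degree $|\Gamma|$. Then $E_X\neq\emptyset$, so $(i)$ holds by Proposition \ref{res}. Fix $\cat$; as above $X_{\me_{\cat}}$ is a $\Gamma$-stable open subset, so its image $Y_{\me_{\cat}}:=\pi(X_{\me_{\cat}})$ is an open subvariety of $Y$ with $(Y_{\me_{\cat}})_K\simeq X_{\me_{\cat}}$; that is, $X_{\me_{\cat}}$ admits the $k$-form $Y_{\me_{\cat}}$. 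The crucial structural point is that the maximal cones of $\me_{\cat}$ form a single $\Gamma$-orbit (the translates $\si(\cat)$, all of the same dimension), so the toric variety $Y_{\me_{\cat}}$ over $k$ has a \emph{unique} closed $T$-orbit. I would then invoke Sumihiro's covering theorem over the possibly non-split field $k$: the normal $T$-variety $Y_{\me_{\cat}}$ is covered by $T$-stable quasi-projective open subsets, and the one containing the unique closed orbit must contain all the maximal cones, hence equals the whole of $Y_{\me_{\cat}}$. Therefore $Y_{\me_{\cat}}$ is quasi-projective, so $X_{\me_{\cat}}=(Y_{\me_{\cat}})_K$ is quasi-projective, and by the ``only if'' part of Proposition \ref{quasi} the fan $\me_{\cat}$ is quasi-projective, which is exactly $(ii)$.

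The routine points — that $\mathrm{res}$ corresponds to $\delta_X$, that $\pi$ is the expected finite \'etale quotient, and that the $X_{\me_{\cat}}$ are $\Gamma$-stable — I would dispatch quickly via Remark \ref{rem}. The genuine obstacle is the necessity of $(ii)$: it rests on the arbitrary-field form of Sumihiro's theorem for normal torus-varieties, combined with the observation that $\me_{\cat}$ carries a single $\Gamma$-orbit of maximal cones, so that the unique closed $T$-orbit collapses any $T$-stable quasi-projective cover of $Y_{\me_{\cat}}$ to the whole variety. Everything else is a formal assembly of the bijections furnished by Propositions \ref{classical} and \ref{res} and the bijectivity of $\alpha_{\open}$.
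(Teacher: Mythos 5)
Your sufficiency argument is the paper's own: under $(i)$ and $(ii)$ the $\Gamma$-stable opens $\bigcup_{\si\in\Gamma}X_{\si(\cat)}$ are quasi-projective by Proposition \ref{quasi} and cover $X$, so Proposition \ref{classical} makes $\alpha_X$ bijective, and $\delta_X=\alpha_{\open}^{-1}\circ(\op{res})\circ\alpha_X$ is then a composition of bijections (the paper leaves this last identification implicit; you state it, which is fine). Where you genuinely diverge is the necessity of $(ii)$. The paper stays over $K$, where the torus is split: since the $k$-form exists, the quotient $U/\Gamma$ of $U=\bigcup_{\si}X_{\si(\cat)}$ exists, the closed $T_K$-orbits of $U$ form one $\Gamma$-orbit and so have a common image, and the preimage of an affine neighborhood of a point of that image is an affine open of $U$ meeting every closed orbit; Proposition \ref{Su} (proved in the paper from Sumihiro's Lemma 7 over an algebraically closed field) then gives quasi-projectivity of $U$. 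You instead descend $X_{\me_{\cat}}$ to its $k$-form and invoke Sumihiro's covering theorem for the \emph{non-split} torus $T$ over $k$. That statement is true — it is in Sumihiro's second paper (Equivariant Completion II), and in later work of Brion over arbitrary fields — so your argument closes, but be aware it is a strictly stronger input than anything this paper uses or cites (the paper cites only Sumihiro's 1974 paper, over algebraically closed fields), and it cannot be obtained by naively Galois-averaging quasi-projective covers: the paper's own Theorem \ref{extor} shows that such averaging fails. If you want to stay inside the paper's toolkit, your descent step finishes with Proposition \ref{Su} applied to the $k$-form: an affine open neighborhood, defined over $k$, of a point of the unique minimal closed $T$-stable subvariety is Galois-stable after base change, hence meets every geometric closed orbit (they form a single Galois orbit), and quasi-projectivity of the $k$-form, of $X_{\me_{\cat}}$, and of the fan follows. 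In short: your route buys a clean statement ("a $k$-form with a unique closed orbit is quasi-projective") at the cost of a heavier external theorem; the paper's route is self-contained.
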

\begin{proof}
Assume first that conditions $(i)$ and $(ii)$ are fulfilled. By Proposition
\ref{res}, the set $E_X$ is nonempty. Fix a toric semi-linear action of
$\Gamma$ on $X$. By condition $(ii)$ and Proposition \ref{quasi}, for every
cone $\cat\in\me$, the open subset $\bigcup_{\si\in\Gamma}X_{\si(\cat)}$ is
quasi-projective. But these open subsets are $\Gamma$-stable (by Remark
\ref{rem}) and cover $X$. By Proposition \ref{classical} the quotient
$X/\Gamma$ exists. Performing this argument for every toric semi-linear action
of $\Gamma$ on $X$, one proves that the map $\delta_X$ is bijective. In
particular, the set $F_X$ is nonempty.

Assume now that the set $F_X$ is nonempty. We want to prove that conditions
$(i)$ and $(ii)$ are fulfilled. By Proposition \ref{classical}, the set $E_X$
is nonempty, so that condition $(i)$ holds. Fix a $T$-orbit $\omega$ on $X$. By
Remark \ref{rem}, the open subset $U=\bigcup_{\si\in\Gamma}X_{\si(\omega)}$ is
$\Gamma$-stable. Moreover, closed $T$-orbits in $U$ form a unique orbit under
$\Gamma$. There exists therefore an affine open subset $V$ in $U$ intersecting
every closed $T$-orbit. By Proposition \ref{Su}  below, one concludes that $U$
is quasi-projective, or, using Proposition \ref{quasi}, that the fan consisting
of the cones $(\si(\cat_{\omega}))_{\si\in\Gamma}$ and their faces is
quasi-projective. This being true for every $T$-orbit $\omega$, we are done.
\end{proof}
Let $G$ be a linear algebraic group over $k$. Sumihiro proved in \cite{Sum} that a normal $G$-variety containing only one closed orbit is quasi-projective.
By the same arguments one gets the next proposition, whose proof we give for the convenience of the reader.
\begin{prop} \label{Su}
Let $X$ be a normal $G$-variety over $k$. Assume that there is an affine subset
of $X$ which meets every closed orbit of $G$ on $X$. Then $X$ is
quasi-projective.
\end{prop}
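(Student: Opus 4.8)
The plan is to imitate Sumihiro's argument for the single-closed-orbit case, the only extra step being the passage from one closed orbit to the affine subset $V$ meeting all of them. Throughout I may as well assume $k$ algebraically closed and $G$ connected, postponing the removal of these hypotheses to the end.

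First I would show that $X=G\cdot V$. Set $U:=G\cdot V=\bigcup_{g\in G}gV$, a $G$-stable open subset. Every closed orbit $\omega$ meets $V$ by hypothesis, and being $G$-stable it satisfies $\omega=G\cdot(\omega\cap V)\subseteq U$; thus $U$ contains every closed orbit. Its complement $X\setminus U$ is $G$-stable and closed, so were it nonempty it would contain a closed orbit (take an orbit of minimal dimension inside it), which is impossible. Hence $X=U=G\cdot V$, and by quasi-compactness finitely many translates $g_1V,\dots,g_mV$ already cover $X$.

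Next I would produce an ample line bundle. Since $V$ is affine and $X$ is normal, the boundary $X\setminus V$ has pure codimension one, hence is the support of an effective Weil divisor. The crucial input is Sumihiro's linearization lemma: on a normal $G$-variety with $G$ connected, a suitable positive multiple of this divisor is Cartier, and the associated line bundle $L$ admits a $G$-linearization together with a global section $s$ whose non-vanishing locus is exactly $V$. Using the linearization I can translate $s$: the section $g_i\cdot s\in H^0(X,L)$ has non-vanishing locus $g_iV$, and these loci are affine and cover $X$. Enlarging this family by finitely many further sections of powers of $L$ so that on each chart $g_iV$ the ratios of sections generate the coordinate algebra $\mathcal{O}(g_iV)$, I conclude by the standard criterion (EGA II.4.5.2) that $L$ is ample, so that $X$ is quasi-projective.

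The hard part is precisely this linearization lemma: on a merely normal, possibly non-factorial variety one must turn the Weil boundary divisor into a genuine Cartier divisor cutting out the affine $V$, while simultaneously endowing the resulting line bundle with a $G$-action — this is where both the normality of $X$ and the connectedness of $G$ enter, exactly as in Sumihiro. It then remains to remove the two simplifying hypotheses, which is routine: one reduces to $G$ connected using that the component group $G/G^0$ is finite, and one reduces to $k=\bar k$ by carrying out the construction over $\bar k$ and observing that the canonically obtained ample bundle is stable under $\operatorname{Gal}(\bar k\,|\,k)$, hence descends, giving quasi-projectivity of $X$ over $k$.
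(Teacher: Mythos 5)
Your step producing the line bundle is where the proof breaks. You outsource the real content to a ``Sumihiro linearization lemma'' asserting that on a normal $G$-variety with $G$ connected, a positive multiple of the boundary divisor $D=X\setminus V$ of an affine open $V$ is Cartier. No such lemma exists: Sumihiro's linearization theorem concerns \emph{invertible} sheaves (some positive power of a given line bundle on a normal $G$-variety admits a $G$-linearization), and it says nothing about a Weil divisor becoming Cartier after passing to a multiple. Indeed the assertion is false in general: on the affine cone $X=\{xy=zw\}$ over $\mathbb{P}^1\times\mathbb{P}^1$ (a normal variety with an action of $\mathbb{G}_m$, or even of $SL_2\times SL_2\times\mathbb{G}_m$), the cone over a curve of type $(1,2)$ is a Weil divisor with affine complement, yet no multiple of it is Cartier. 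This does not contradict the Proposition only because that affine complement misses the vertex, i.e.\ the unique closed $\mathbb{G}_m$-orbit. So the Cartier-ness of $D$ is exactly the place where the closed-orbit hypothesis must be used a second time --- and your argument never invokes that hypothesis after the initial covering step. The paper supplies precisely this missing piece: set $\mathcal{L}=\mathcal{O}_X(D)$, note that normality gives $\mathcal{L}\simeq i_*(\mathcal{L}|_{X^0})$ for $i:X^0\to X$ the inclusion of the smooth locus, linearize the invertible sheaf $\mathcal{L}|_{X^0}$ under a finite covering $G'$ of $G$ (this is where Sumihiro's theorem legitimately applies), transport the $G'$-action to $\mathcal{L}$ through this isomorphism, and observe that the locus $A$ where $\mathcal{L}$ fails to be invertible is then a $G$-stable closed subset contained in $D$; since every nonempty $G$-stable closed subset contains a closed orbit and every closed orbit meets $V$, one gets $A=\emptyset$, so $D$ itself is Cartier. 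With that established, the remainder of your argument (translated canonical sections, affine non-vanishing loci $g_iV$ covering $X$, then ampleness via EGA II 4.5.2, or equivalently Sumihiro's quasi-projectivity criterion, which is what the paper cites) goes through essentially as you wrote it.

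Two smaller caveats on your closing reductions. The passage from $G$ to its identity component is not routine in the direction you need: a closed $G$-orbit is a finite disjoint union of closed $G^0$-orbits, and the affine set $V$ may meet only some of them, so the hypothesis for $G$ does not imply the hypothesis for $G^0$. The paper sidesteps this entirely by never assuming $G$ connected --- a finite covering $G'$ of $G$ enters only to linearize on the smooth locus. The reduction to $k$ algebraically closed is fine (quasi-projectivity is a geometric property), and is all the paper says about it.
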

\begin{proof}
Quasi-projectivity is of geometric nature, so one can assume that $k$ is
algebraically closed in what follows.  We will use the following
quasi-projectivity criterion given in \cite{Sum} (Lemma $7.$) :
\begin{lemma}
If there exists a line bundle $\ml$ on $X$ and global sections $s_1,...,s_n$
generating $\ml$ at every point and such that $X_{s_1},...,X_{s_n}$ are affine,
then $X$ is quasi-projective.
\end{lemma}
Let $U$ be an affine subset of $X$ meeting every closed orbit of $G$ and
$D=X\setminus U$. Then $D$ is a Weil divisor. Denote by $\ml$ the coherent
sheaf $\mo_X(D)$, and by $i:X^0\rw X$ the inclusion of the regular locus of $X$
in $X$. The natural morphism $\varphi : \ml\rw i_*(\ml_{|X^0})$ is an
isomorphism, because $X$ is normal. Moreover, the sheaf $\ml_{|X^0}$ is
invertible on the smooth $G$-variety $X^0$, so that one can let a finite
covering $G'$ of $G$ act on $\ml_{|X^0}$, and thus on $\ml$, using the
isomorphism $\varphi$. This enables us to see that the set 
$$A=X\setminus\{x\in X,\quad \ml\text{ is invertible in a neighborhood of }x\}$$
is a $G$-stable closed subset of $X$. Moreover, $\ml_{|U}$ is trivial, so that
$A$ is contained in $D$, proving that $A$ is empty, or, otherwise stated, the
sheaf $\ml$ is invertible : the divisor $D$ is Cartier. Let now $s$ be the
canonical global section of the sheaf $\ml$. The common zero locus of the
translates $(g'.s)_{g'\in G'}$ is also a $G$-closed subset of $D$, and is
therefore empty.  We can thus find a finite number of elements $g'_1,...,g'_n$
of $G'$ such that the global sections $g'_1.s,...,g'_n.s$ generate $\ml$ at
every point. Moreover, the open subsets $X_{g'_i.s}$ are $G$-translates of $U$
and are therefore affine. This completes the proof.

\end{proof}

\subsection{Applications}\label{applications}
 In this section we
give first some conditions on $T$ for every toric variety under $T_K$ to have a
$k$-form. Then we construct an example of a $3$-dimensional torus $T$ split by
a degree $3$ extension $K$ of $k$ and a toric variety $X$ under $T$ with $F_X$
empty.
\begin{thm}\label{two}
Assume that $\op{dim}_k(T)=2$, or that the torus $T$ is split by a quadratic
extension. Then, for every toric variety $X$ under the action of $T_K$ whose
associated fan is $\Gamma$-stable, the map $\delta_X$ is bijective.
\end{thm}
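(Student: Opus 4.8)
The plan is to reduce Theorem \ref{two} to the criterion provided by Theorem \ref{criterion}. Since the fan $\me_X$ is assumed to be $\Gamma$-stable, condition $(i)$ of Theorem \ref{criterion} holds automatically, and the entire task is to verify condition $(ii)$: for every cone $\cat\in\me_X$, the fan $\mf_{\cat}$ consisting of the cones $(\si(\cat))_{\si\in\Gamma}$ together with their faces must be quasi-projective. Once condition $(ii)$ is established, Theorem \ref{criterion} immediately yields that $\delta_X$ is bijective, which is precisely the conclusion we seek. So the proof splits according to the two hypotheses on $T$, and in each case I would argue that the auxiliary fan $\mf_{\cat}$ is quasi-projective.

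\textbf{The two-dimensional case.} Here $\op{dim}_k(T)=2$, so $V=N\otimes_{\mz}\mq$ is two-dimensional and $\mf_{\cat}$ is a two-dimensional fan. By Remark \ref{deux}, every two-dimensional fan is quasi-projective. Thus condition $(ii)$ is automatic and we are done in this case with essentially no further work.

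\textbf{The quadratic case.} Suppose $T$ is split by a quadratic extension, so that $\Gamma=\op{Gal}(K|k)$ has order $2$, say $\Gamma=\{1,\si\}$. Then for a fixed cone $\cat\in\me_X$, the auxiliary fan $\mf_{\cat}$ has at most two maximal cones, namely $\cat$ and $\si(\cat)$. By Remark \ref{deuxprime}, a fan with one or two maximal cones is quasi-projective: if the two maximal cones coincide the fan is affine, and otherwise one takes a linear form $l\in V^*$ positive on $\cat$ and negative on $\si(\cat)$ and sets $l_{\cat}=l$, $l_{\si(\cat)}=0$ to exhibit the family required by Proposition \ref{quasi}. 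Hence condition $(ii)$ holds here as well.

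The only subtlety I anticipate, and the step to state carefully, is that in the quadratic case one must confirm the auxiliary fan genuinely has at most two maximal cones. This requires observing that since $|\Gamma|=2$, the collection $(\si(\cat))_{\si\in\Gamma}$ consists of $\cat$ and $\si(\cat)$ only; every maximal cone of $\mf_{\cat}$ is a $\Gamma$-translate of $\cat$ (the faces contribute only lower-dimensional cones), so there are indeed at most two of maximal dimension. With that observation in hand, Remark \ref{deuxprime} applies directly. In both cases condition $(ii)$ is verified, so Theorem \ref{criterion} gives the bijectivity of $\delta_X$, completing the proof.
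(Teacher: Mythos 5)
Your proof is correct and takes essentially the same approach as the paper: the quadratic case is verbatim the paper's argument (at most two maximal cones, Remark \ref{deuxprime}, then Theorem \ref{criterion}). The only cosmetic difference is in the two-dimensional case, where the paper notes that $X$ itself is quasi-projective by Remark \ref{deux} and concludes directly via Remark \ref{ez}, whereas you verify condition $(ii)$ for each auxiliary subfan and route both cases uniformly through Theorem \ref{criterion}; both are immediate consequences of Remark \ref{deux}.
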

\begin{proof}
If $\op{dim}_k(T)=2$, then $\op{dim}_K T_K=2$, and Remark \ref{deux} shows that
the variety $X$ is quasi-projective. By Remark \ref{ez}, the map $\delta_X$ is
bijective in that case. Assume now that the torus $T$ is split by a quadratic
extension $K$ of $k$. Then the Galois group $\Gamma$ has two elements, and thus
for every cone $\cat\in\Sigma$, the fan consisting of the cones
$(\si(\cat))_{\si\in\Gamma}$ and their faces has only one or two maximal cones.
By Remark \ref{deuxprime}, this fan is automatically quasi-projective, so that
Theorem \ref{criterion} applies.
\end{proof}
\begin{rem}
The tori $T$ split by a fixed quadratic extension $K$ of $k$ are exactly the
products of the following three tori : $\mathbb{G}_{m}$,
$R_{K|k}(\mathbb{G}_{m})$, $R^1_{K|k}(\mathbb{G}_{m})$. Here
$R_{K|k}(\mathbb{G}_{m})$ is the Weil restriction of the torus
$\mathbb{G}_{m}$, and $R^1_{K|k}(\mathbb{G}_{m})$ is the kernel of the norm map
$$N_{K|k} : R_{K|k}(\mathbb{G}_{m})\rw \mathbb{G}_{m}.$$
\end{rem}
We now provide an example of a split toric variety over $K$ which admits no
$k$-form. We assume that the Galois extension $K$ over $k$ is cyclic of degree
$3$. Let $\ep$ be a generator of $\Gamma$. Fix a basis $(u,v,w)$ of the lattice
$N=\mz^3$, and let $\ep$ acts on $N$ by the following matrix in the basis
$(u,v,w)$
$$\left(
\begin{array}{ccc}
0 & -1 & 0  \\
1 & -1 & 0  \\
0 &  0 & 1  \\
\end{array}
\right).$$ One defines in this way an action of $\Gamma$ on $N$. Let $T$ be the
torus over $k$ corresponding to this action. This is a three-dimensional torus
split by $K$.

\begin{lemma}
Let $\cat=\operatorname{Cone}(5u+v-5w,-5u-5v+14w,4u-v)$. Then
$\cat\cap\ep(\cat)=\{0\}.$
\end{lemma}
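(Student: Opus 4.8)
The natural strategy is to separate the two cones by a single hyperplane. Denote the three given generators of $\cat$ by $g_1=5u+v-5w$, $g_2=-5u-5v+14w$, $g_3=4u-v$. I would look for a linear form $\ell\in V^*$ that is strictly positive on each $g_i$ and strictly negative on each of the three generators of $\ep(\cat)$. Since every point of a cone is a nonnegative combination of its generators, such an $\ell$ is automatically positive on $\cat\setminus\{0\}$ and negative on $\ep(\cat)\setminus\{0\}$, which forces $\cat\cap\ep(\cat)=\{0\}$.

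First I would make $\ep(\cat)$ explicit. Reading off the given matrix, $\ep$ sends $u\mapsto v$, $v\mapsto -u-v$ and $w\mapsto w$, so the generators of $\ep(\cat)$ are $\ep(g_1)=-u+4v-5w$, $\ep(g_2)=5u+14w$ and $\ep(g_3)=u+5v$. Next I would solve for $\ell$: writing $\ell(au+bv+cw)=pa+qb+rc$, the six desired strict inequalities form a small linear feasibility problem in $(p,q,r)$. Searching among the forms with $p=0$ quickly produces $\ell(u)=0$, $\ell(v)=-10$, $\ell(w)=-3$. A direct check then gives the values $\ell(g_1)=5$, $\ell(g_2)=8$, $\ell(g_3)=10$ on the generators of $\cat$, and $-25,-42,-50$ on the three generators of $\ep(\cat)$.

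The conclusion is then immediate. If $x=a_1g_1+a_2g_2+a_3g_3\in\cat$ with $a_i\ge 0$, then $\ell(x)=5a_1+8a_2+10a_3\ge 0$, with equality only when all $a_i=0$, i.e. $x=0$; symmetrically $\ell\le 0$ on $\ep(\cat)$, with equality only at $0$. A common nonzero point $x$ would therefore satisfy both $\ell(x)>0$ and $\ell(x)<0$, which is absurd, so $\cat\cap\ep(\cat)=\{0\}$.

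The only real obstacle is producing the separating form: the substance of the argument is solving the finite feasibility problem for $(p,q,r)$, after which the sign argument is purely routine. Exhibiting $\ell$ explicitly settles the existence question outright, and as a by-product the same form (and its negative) shows that both $\cat$ and $\ep(\cat)$ are strictly convex. An alternative, more computational route would be to set $\sum a_ig_i=\sum b_j\ep(g_j)$ with all coefficients nonnegative and show the only solution yields the zero vector, but the separating form gives a shorter and more transparent proof.
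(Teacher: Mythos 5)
Your proof is correct, and every numerical claim checks out: with $\ell(u)=0$, $\ell(v)=-10$, $\ell(w)=-3$ one gets the values $5,8,10$ on the generators of $\cat$ and $-25,-42,-50$ on those of $\ep(\cat)$, and strict positivity on $\cat\setminus\{0\}$ together with strict negativity on $\ep(\cat)\setminus\{0\}$ indeed forces $\cat\cap\ep(\cat)=\{0\}$. The paper takes what you dismiss at the end as the ``alternative, more computational route'': it assumes a common point, equates a nonnegative combination of the generators of $\cat$ with one of the generators of $\ep(\cat)$, extracts three linear equations, and forms the combination $14$ times the second equation plus $5$ times the third, obtaining $-11a-14c=31d+70e+70f$; the left side is nonpositive, the right side nonnegative, so $d=e=f=0$ and the point is zero. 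The two arguments are dual to one another: the paper's combination is precisely the evaluation of the linear form $\alpha u+\beta v+\gamma w\mapsto 14\beta+5\gamma$, which takes values $-11,0,-14$ on the generators of $\cat$ and $31,70,70$ on those of $\ep(\cat)$ --- a \emph{weakly} separating functional (it vanishes on the ray through $-5u-5v+14w$), which still suffices because strict positivity on one side alone kills a nonzero common point. Your functional is strictly separating on both sides, which makes the sign argument symmetric and, as you note, yields strict convexity of both cones as a by-product (the paper instead gets this from the determinant computation in Lemma \ref{lisse}); the price is that you must search for the form, whereas the paper's coefficients $14$ and $5$ are read off directly by eliminating variables from the equations. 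Both proofs are equally rigorous and of comparable length.
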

\begin{proof}
One has $\ep(\cat)=\operatorname{Cone}(-u+4v-5w,5u+14w,u+5v)$. Let $x\in
\cat\cap\ep(\cat)$. There exist $a,b,c,d,e,f\geqslant 0$ such that
$$
\begin{aligned}
x &= (5a-5b+4c)u+(a-5b-c)v+(-5a+14b)w\\
 &=(-d+5e+f)u+(4d+5f)v+(-5d+14e)w.
\end{aligned}
$$
Consequently 
$$\left\{\begin{array}{ccc}
 5a-5b+4c & = & -d+5e+f \\
a-5b-c & = & 4d+5f \\
-5a+14b & = &-5d+14e,
\end{array}\right.$$
 and then $14$(second line)+$5$(third line) gives
$$\begin{aligned}
 -11a-14c=31d+70e+70f.
\end{aligned}$$
The left hand-side is nonpositive, and the right hand-side is nonnegative, so that
$d=e=f=0$, and then $x=0$.

\end{proof}
\begin{defi}
Let $\me$ be the fan consisting of  $\cat, \ep(\cat), \ep^2(\cat)$ and their
faces.
\end{defi}
\begin{lemma}\label{lisse}
The fan $\me$ is smooth.
\end{lemma}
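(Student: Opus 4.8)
The plan is to verify that the fan $\me$ is smooth by checking that each of its three maximal cones $\cat$, $\ep(\cat)$, $\ep^2(\cat)$ is a smooth (regular) cone, i.e.\ that its generators form part of a $\mz$-basis of the lattice $N=\mz^3$. Since $\ep$ is an automorphism of $N$ (given by an integer matrix with integer inverse), it carries smooth cones to smooth cones; hence it suffices to prove that the single cone $\cat=\operatorname{Cone}(5u+v-5w,\,-5u-5v+14w,\,4u-v)$ is smooth, and smoothness of $\ep(\cat)$ and $\ep^2(\cat)$ follows automatically. The faces of a smooth cone are smooth, so once the maximal cones are handled the whole fan is smooth.

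First I would record the three generators of $\cat$ as the columns (or rows) of the $3\times 3$ integer matrix
\[
A=\begin{pmatrix} 5 & -5 & 4\\ 1 & -5 & -1\\ -5 & 14 & 0\end{pmatrix},
\]
and compute its determinant. A cone generated by three vectors in a rank-$3$ lattice is smooth precisely when those vectors form a $\mz$-basis of $N$, which happens if and only if $\det A=\pm 1$. So the entire computation reduces to a single $3\times 3$ determinant evaluation; I expect $\det A=\pm 1$, confirming that the generators of $\cat$ are a lattice basis and hence that $\cat$ is a smooth cone.

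With $\cat$ smooth, applying the lattice automorphism $\ep$ shows $\ep(\cat)$ and $\ep^2(\cat)$ are smooth as well. It remains only to note that the fan $\me$ consists of these three maximal cones together with their faces: the faces of a smooth cone are spanned by subsets of a lattice basis, so they are automatically smooth, and the trivial cone $\{0\}$ is smooth. Therefore every cone of $\me$ is smooth, which is the assertion of Lemma \ref{lisse}.

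The only genuine content here is the determinant computation, so the main (and essentially only) obstacle is arithmetic: one must check that the $3\times 3$ integer matrix of generators has determinant $\pm 1$. There is no conceptual difficulty beyond this, since smoothness of a cone in the toric dictionary is exactly the unimodularity of its primitive generators, and the $\Gamma$-symmetry of the fan lets us avoid repeating the check for the other two maximal cones.
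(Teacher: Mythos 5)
Your proposal is correct and is essentially identical to the paper's own proof: reduce via the lattice automorphism $\ep$ to checking that $\cat$ alone is smooth, then verify unimodularity of the matrix of generators. The one step you left as "expected" does check out: the determinant of your matrix $A$ equals $1$, exactly as the paper computes.
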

\begin{proof}
It is enough to check that $\cat$ is a smooth cone, and this holds :
$$\left|
\begin{array}{ccc}
5  & -5 &  4  \\
1  & -5 & -1  \\
-5 & 14 &  0  \\
\end{array}
\right|=1.$$
\end{proof}

\begin{prop} \label{strictconv}
The fan $\me$ is not quasi-projective.
\end{prop}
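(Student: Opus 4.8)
The plan is to assume for contradiction that $\me$ is quasi-projective and extract a contradiction from Proposition \ref{quasi}. So suppose there is a family of linear forms $(l_{\mathcal{D}})_{\mathcal{D}\in\me}$ as in that proposition, and write $l_1,l_2,l_3$ for the forms attached to the three maximal cones $\cat,\ep(\cat),\ep^2(\cat)$. Applying $\ep$ and $\ep^2$ to the lemma showing $\cat\cap\ep(\cat)=\{0\}$ gives that any two distinct maximal cones meet only at $0$; hence the first (compatibility) condition of Proposition \ref{quasi} is vacuous among them, and the only input I will use is the second condition restricted to maximal cones: for $i\neq j$ one has $l_i>l_j$ on $\op{Int}$ of the $i$-th maximal cone.

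The first real step is to average over $\Gamma$. Since $\ep$ permutes the three maximal cones cyclically and carries interiors to interiors, replacing $(l_1,l_2,l_3)$ by the transported average $l_i':=\tfrac13\sum_{k=0}^{2} l_{i+k}\circ\ep^{k}$ (indices mod $3$) preserves every strict inequality $l_i'>l_j'$ on the $i$-th interior, because each summand is positive there, and makes the triple $\Gamma$-invariant, i.e. $l'_{i+1}=l'_i\circ\ep^{-1}$. Thus I may assume $l_2=l\circ\ep^2$ and $l_3=l\circ\ep$, where $l:=l_1$ and I used $\ep^{-1}=\ep^2$.

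Now set $a:=l_1-l_2=l-l\circ\ep^2$ and $b:=l_1-l_3=l-l\circ\ep$; a direct check using $\ep^3=\mathrm{id}$ gives $a\circ\ep=l\circ\ep-l=-b$, so $b=-a\circ\ep$. Because $\ep$ fixes $w$, both $a$ and $b$ annihilate $w$, hence descend to linear forms on the plane $P:=V/\langle w\rangle$, on which $\ep$ acts with no nonzero invariant. Writing $\bar a$ for the induced form and $\bar{\cat}$ for the image of $\cat$ in $P$, the inequalities $a\geq0$ and $b\geq0$ on $\cat$ (closures of the strict ones) translate into
$$\bar a\geq 0\ \text{ on }\ \bar{\cat},\qquad \bar a\leq 0\ \text{ on }\ \ep(\bar{\cat}).$$
The heart of the matter is that $\bar{\cat}$ and $\ep(\bar{\cat})$ overlap in a genuinely two-dimensional region: the images in $P$ of $5u$ and of $10u+v$ both lie in $\bar{\cat}$ and in $\ep(\bar{\cat})$ (each is a nonnegative combination of the projected generators of each cone, as one checks directly), and they are linearly independent. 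On these two vectors $\bar a$ is forced to be simultaneously $\geq 0$ and $\leq 0$, hence $0$; therefore $\bar a=0$, so $a=0$ and then $b=-a\circ\ep=0$. This yields $l_1=l_2=l_3$, contradicting $l_1>l_2$ on the nonempty interior of $\cat$, and proves that $\me$ is not quasi-projective.

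The step I expect to be the main obstacle is the passage to the plane $P$: recognizing that after the $\Gamma$-averaging the entire obstruction is precisely the two-dimensional overlap of $\bar{\cat}$ with its $\ep$-translate, and pinning down explicit witnesses of that overlap (the numbers defining $\cat$ are chosen so that the projected cone is wide enough for this overlap to occur). Everything else is formal bookkeeping. Equivalently, once one is reduced to the two variables $x=l(u)$ and $y=l(v)$, the six inequalities $l_i\geq l_j$ on the generators of $\cat$ become linearly infeasible; this can be certified, in place of the overlap argument, by the identity $5(9x+6y)+(-5x-10y)+4(-10x-5y)\equiv 0$, which combines three of the nonnegative quantities into $0$ and thus forces each of them, and finally $l$ itself, to degenerate.
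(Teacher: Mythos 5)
Your proof is correct, and it reaches the contradiction by a genuinely different route than the paper. The paper's argument is shorter and more concrete: it exhibits the single vector $-w$ inside $\cat-\ep(\cat)$ (via $45u-25w=5((5u+v-5w)+(4u-v))\in\cat$ and $45u+126w=9(5u+14w)\in\ep(\cat)$), hence --- applying $\ep$, $\ep^2$ and using $\ep(w)=w$ --- inside all three difference cones $(\cat-\ep(\cat))\cap(\ep(\cat)-\ep^2(\cat))\cap(\ep^2(\cat)-\cat)$; quasi-projectivity would force each cyclic difference $l_{\cat}-l_{\ep(\cat)}$, $l_{\ep(\cat)}-l_{\ep^2(\cat)}$, $l_{\ep^2(\cat)}-l_{\cat}$ to be strictly positive at $-w$, and summing the three strict inequalities yields $0>0$. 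You instead Galois-average the forms (valid, as you note, since each summand of $l'_i-l'_j$ is positive on $\op{Int}(\cat_i)$), which collapses the problem to a single difference $a=l-l\circ\ep^2$ annihilating $w$, and you then read off the contradiction from the two-dimensional overlap of the shadows $\bar\cat$ and $\ep(\bar\cat)$ in $V/\langle w\rangle$ (your witnesses $5u$ and $10u+v$ do lie in both projected cones, as I checked). Both proofs exploit exactly the same geometric feature of the example --- the three cones wrap around the $\ep$-fixed axis $\langle w\rangle$, so their projections overlap --- but they demand different things of Proposition \ref{quasi}: the paper needs strict positivity of $l_{\cat}-l_{\ep(\cat)}$ at a difference of \emph{boundary} points of the two cones ($45u-25w$ and $45u+126w$ lie on proper faces), which tacitly invokes the compatibility and strictness conditions at faces, whereas your version only ever uses strict inequalities on open interiors and closed inequalities after passing to closures, so it is slightly more self-contained on that point. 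What the paper buys is brevity: one vector, three inequalities, no averaging. What you buy is robustness and transparency: the averaging works over any finite group action, the planar picture makes the obstruction visible, and your identity $5(9x+6y)+(-5x-10y)+4(-10x-5y)\equiv 0$ doubles as an explicit infeasibility certificate playing the role of the paper's telescoping sum.
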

\begin{proof}
Observe first that $-w\in \cat-\ep(\cat)$. Indeed,
$45u-25w=5((5u+v-5w)+(4u-v))\in \cat$ and $45u+126w=9(5u+14w)\in\ep(\cat)$, so
that $-151w=(45u-25w)-(45u+126w)\in \cat-\ep(\cat)$, and finally, $-w\in
\cat-\ep(\cat)$. Applying $\ep$ and $\ep^2$, one obtains 
$$-w\in (\cat-\ep(\cat))\cap(\ep(\cat)-\ep^2(\cat))\cap (\ep^2(\cat)-\cat).$$ Now suppose that the fan $\me$ is
quasi-projective, and use the notations of Definition \ref{quasi}. The linear
form $l_{\cat}-l_{\ep(\cat)}$ is strictly positive on
$(\cat-\ep(\cat))\setminus\{0\}$, and then : $(l_{\cat}-l_{\ep(\cat)})(-w)>0$.
But similarly, $l_{\ep(\cat)}-l_{\ep^2(\cat)}$ is strictly positive on
$(\ep(\cat)-\ep^2(\cat))\setminus\{0\}$ (resp.
$(\ep^2(\cat)-\cat)\setminus\{0\})$ and thus
$(l_{\ep(\cat)}-l_{\ep^2(\cat)})(-w)>0$ (resp.
$(l_{\ep^2(\cat)}-l_{\cat})(-w)>0$). This gives a contradiction, proving that
the fan $\me$ is not quasi-projective.
\end{proof}
In view of Theorem \ref{criterion}, one has the following :
\begin{thm}\label{extor}
The toric variety $X_{\me}$ under $T_K$ is smooth and does not admit any
$k$-form.
\end{thm}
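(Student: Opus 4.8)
The plan is to apply Theorem \ref{criterion} directly: the final statement asserts that $X_{\me}$ is smooth and admits no $k$-form, and the machinery assembled in the excerpt reduces both claims to facts already established about the fan $\me$. For smoothness, I would invoke Lemma \ref{lisse}, which says that $\me$ is a smooth fan; since smoothness of a toric variety is detected cone by cone and $X_{\me}$ is built by gluing the affine pieces $X_{\cat}$, $X_{\ep(\cat)}$, $X_{\ep^2(\cat)}$ along their intersections, the variety $X_{\me}$ is smooth over $K$.

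For the nonexistence of a $k$-form, I would verify the two conditions of Theorem \ref{criterion} and show that $(i)$ holds while $(ii)$ fails. Condition $(i)$ requires that $\me$ be $\Gamma$-stable, which is immediate from the very definition of $\me$: it was constructed as the collection of the cones $\cat,\ep(\cat),\ep^2(\cat)$ together with their faces, and since $\Gamma$ is cyclic of order $3$ generated by $\ep$ (so $\ep^3$ acts trivially on $N$), applying any element of $\Gamma$ permutes these three maximal cones and permutes their faces accordingly. Hence $\me$ is $\Gamma$-stable. Condition $(ii)$ demands that for every cone $\cat'\in\me$, the fan consisting of $(\si(\cat'))_{\si\in\Gamma}$ and their faces be quasi-projective. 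Taking $\cat'=\cat$, the fan in question is precisely $\me$ itself (its three maximal cones are the $\Gamma$-translates of $\cat$, and their faces are included). But Proposition \ref{strictconv} establishes that $\me$ is \emph{not} quasi-projective. Therefore condition $(ii)$ fails, and by the ``only if'' direction of Theorem \ref{criterion} the set $F_X$ is empty, i.e.\ $X_{\me}$ has no $k$-form.

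The routine but essential bookkeeping step is checking that Lemma \ref{lisse}, which verifies smoothness only for the single cone $\cat$, indeed yields smoothness of all three maximal cones: since $\ep$ acts on $N$ by an element of $\operatorname{GL}_3(\mz)$ (its matrix has determinant $\pm1$), it carries a smooth cone to a smooth cone, so $\ep(\cat)$ and $\ep^2(\cat)$ are automatically smooth once $\cat$ is. I expect the only genuine subtlety to be correctly matching condition $(ii)$ of Theorem \ref{criterion} with the statement of Proposition \ref{strictconv}. One must observe that the subfan attached to the orbit of $\cat$ under $\Gamma$ coincides with the whole fan $\me$, precisely because $\me$ has no maximal cones other than $\cat,\ep(\cat),\ep^2(\cat)$; this is what lets the non-quasi-projectivity of $\me$ translate into the failure of $(ii)$. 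With these identifications in place, the theorem follows without any further computation.
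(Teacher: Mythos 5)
Your proposal is correct and follows exactly the route the paper intends: the paper states the theorem ``in view of Theorem \ref{criterion},'' leaving implicit precisely the three checks you spell out — smoothness via Lemma \ref{lisse} (with the observation that $\ep\in\operatorname{GL}_3(\mz)$ carries smooth cones to smooth cones), $\Gamma$-stability of $\me$ by construction for condition $(i)$, and the identification of the subfan generated by the $\Gamma$-orbit of $\cat$ with $\me$ itself so that Proposition \ref{strictconv} kills condition $(ii)$. Nothing is missing; you have simply made explicit what the paper compresses into one sentence.
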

In this example, $T$ is of minimal dimension and $\Gamma$ of minimal
order, in view of Theorem \ref{two}.
\begin{rem}
The variety $X_{\me}$ gives an example of a toric variety containing three
points which do not lie on an open affine subset. In \cite{Wl}, it is shown
that any two points in a toric variety lie on a common affine subset.
\end{rem}
\begin{rem}
Using techniques from \cite{Co}, one can "compactify" the fan $\me$ in a
$\Gamma$-equivariant way, and thus produce an example of a smooth complete
toric variety under the action of $T_K$ which has no $k$-form. We first produce
a complete simplicial fan $\Gamma$-stable fan $\me_0$ containing $\me$. For
simplicity, we note 
$$r_1=-5u-5v+14w, \quad s_1=4u-v, \quad t_1=5u+v-5w,$$
$$r_2=\ep(r_1),\quad s_2=\ep(s_1),\quad t_2=\ep(t_1), \quad r_3=\ep^2(r_1),\quad s_3=\ep^2(s_1),\quad t_3=\ep^2(t_1).$$
The fan $\me_0$ has maximal cones 
$$\op{Cone}(r_1,t_3,s_1),\quad\op{Cone}(t_3,s_1,t_1), \quad\cat=\op{Cone}(r_1,s_1,t_1)$$
$$\op{Cone}(r_1,r_2,t_1),\quad\op{Cone}(r_1,r_2,r_3),\quad\op{Cone}(t_1,t_2,t_3)$$
and their images under $\Gamma$. Because the fan $\me_0$ is complete, it gives
a triangulation of the unit sphere in $\mr^3$. By projecting this triangulation
from the South Pole to the tangent plane of the North Pole, one gets Figure \ref{fann}. The maximal cone $\op{Cone}(t_1,t_2,t_3)$ of $\me_0$ is missing in this picture
because it is sent to infinity by the projection. In order to smoothen the fan
$\me_0$ in a $\Gamma$-equivariant way, we use the method described in
\cite{Co}. We first subdivide $\me_0$ using the vectors $r=-5v+28w$, $w$,
$t=u-4v-10w$ and $-w$. We thus obtain a complete, simplicial $\Gamma$-stable
fan $\me'_0$ satisfying Property $(*)$ defined in Proposition $2.$ of
\cite{Co}. This fan $\me'_0$ has maximal cones

$$\op{Cone}(r_1,r,t_1),\quad \op{Cone}(r,r_2,t_1),\quad\op{Cone}(r_1,r,w),\quad\op{Cone}(r,r_2,w),\quad \cat=\op{Cone}(r_1,s_1,t_1)$$
$$\op{Cone}(r_1,s_1,t_3)\quad \op{Cone}(t_3,s_1,t)\quad\op{Cone}(t,t_1,s_1)\quad\op{Cone}(t_3,t,-w)\quad\op{Cone}(t,t_1,-w)$$
and their images under $\Gamma$. We now can apply to $\me'_0$ the algorithm
explained in the proof of Proposition $3.$ of \cite{Co} because it satisfies
Property $(*)$. We thus get a complete, smooth $\Gamma$-stable fan containing
$\me$.
\begin{figure}[!ht]\label{fann}
\centering
\includegraphics[width=8cm,height=8cm]{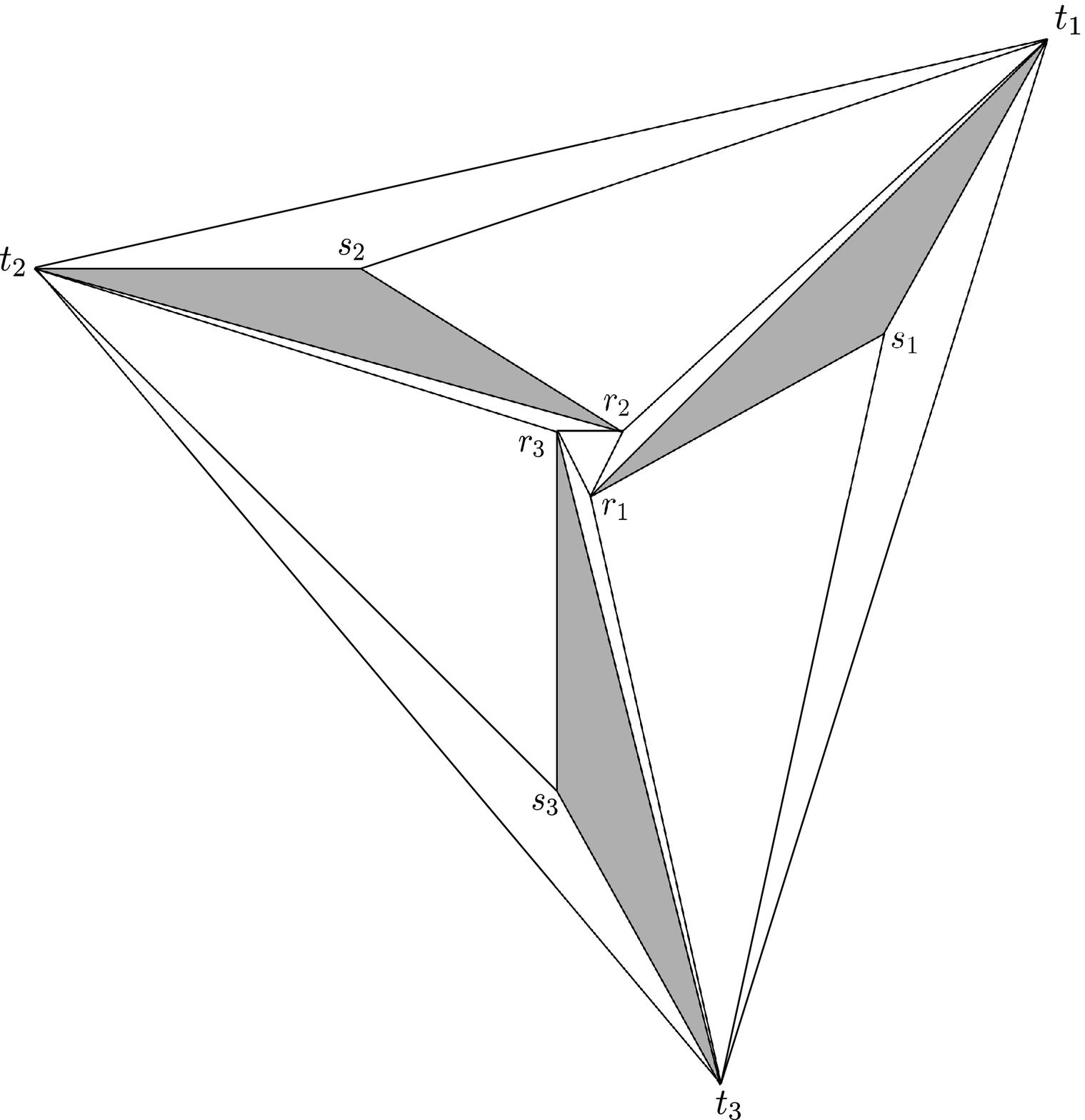}
\caption{The fans $\me$ (grey) and $\me_0$.}
\end{figure}
\end{rem}

\section{Spherical embeddings over an arbitrary field}\label{spherical}
Let $k$ be a field, and $\kb$ a fixed algebraic closure. Throughout this
section, we denote by $\Gamma$ the absolute Galois group of $k$. Let $G$ be a
connected reductive algebraic group over $k$. We note $\Gkb=G\times_k\kb$.
\begin{defi}\label{defsf}
 A spherical homogeneous space under $G$ over $k$ is a pointed $G$-variety $(X_0,x_0)$ over $k$ such that :
\begin{itemize}
\item The group $G(\kb)$ acts transitively on $X_0(\kb)$.
\item $x_0\in X_0(k)$.
\item There exists a Borel subgroup $B$ of $\Gkb$ such that the orbit of $x_0$ under $B(\kb)$ is open in $X_0(\kb)$.
\end{itemize}
We say that $X_0$ is split over $k$ if there exists a split Borel subgroup $B$
of $G$ such that $B x_0$ is open in $\Xokb$. In this case, the group $G$ itself
is split.
\end{defi}
\begin{rem} For simplicity, we will often denote by $X_0$ a spherical homogeneous
space, omitting the base point.
\end{rem}
\begin{defi}
 Let $X_0$ be a spherical homogeneous space under $G$. An embedding of $X_0$ is a pointed normal $G$-variety $(X,x)$ together with a $G$-equivariant immersion $i: X_0\rw X$
preserving base points. A morphism between two embeddings is defined to be a
$G$-equivariant morphism defined over $k$ preserving base points.
\end{defi}
\begin{rem}
For simplicity again, when speaking about an embedding of $X_0$, we will often
omit the base point and the immersion.
\end{rem}
\begin{rem}
If there exists a morphism between two embeddings, then it is unique.
\end{rem}

In this section, we classify the embeddings of a fixed spherical homogeneous
space $X_0$ under $G$ up to isomorphism. We first recall in Section
\ref{warmup} the Luna-Vust classification theory of spherical embeddings,
assuming that the field $k$ is algebraically closed. Fundamental objects named
colored fans are the cornerstone of that theory. In Section \ref{galoisaction}
we let the Galois group $\Gamma$ act on these colored fans, and prove in
Section \ref{kformspheric} that the embeddings of $X_0$ are classified by
$\Gamma$-stable colored fans satisfying an additional condition. In Section
\ref{appli} we provide several situations where this condition is fulfilled
(including the split case), and an example where it is not.

\subsection{Recollections on spherical embeddings}\label{warmup}
We assume that $k$ is algebraically closed. Fix a spherical homogeneous space
$X_0$ under $G$, and a Borel subgroup $B$ such that $B x_0$ is open in $X_0$.
We now list some facts about $X_0$ and give the full classification of its
embeddings (see \cite{Kn} for proofs and more details). We will introduce along
the way notations that will be systematically used later on.
\begin{notation}
We will denote by :
\begin{itemize}
\item $\ka=k(X_0)$ the function field of $X_0$. The group $G$ acts on $\ka$.
\item $\nkb$ the set of $k$-valuations on $\ka$ with values in $\mq$. The group $G$ acts on $\nkb$. We will denote by $\nkb^G$ the set of $G$-invariant
valuations, and by $\nkb^B$ the set of $B$-invariant valuations.
\item $\open$ the orbit of $x_0$ under the action of $B$. This is an affine variety.
\item $\md$ the set of prime divisors in $X_0\setminus\open$. These are finitely many $B$-stable divisors.
\item $\mx$ the set of weights of $B$-eigenfunctions in $\ka$. This is a sublattice of the character lattice of $B$. The rank of $\mx$ is
called the rank of $X_0$ and denoted by $rk(X_0)$.
\item $V=\operatorname{Hom}_{\mathbb{Z}}(\mx,\mathbb{Q})$. This is a $\mq$-vector space of dimension $rk(X_0)$.
\item $\rho$ the  map  $\mv\rightarrow V,\quad\nu\mapsto (\chi\mapsto \nu(f_{\chi})),$
$f_{\chi}\in\ka$ being a $B$-eigenfunction of weight $\chi$ (such an $f_{\chi}$ is
uniquely determined up to a scalar). The restriction of $\rho$ to $\mv^G$ is
injective, and the image $\rho(\mv^G)$ is a finitely generated cone in $V$
whose interior is nonempty.
\end{itemize}
\end{notation}
The Luna-Vust theory of spherical embeddings gives a full classification of
embeddings of $X_0$ in terms of combinatorial data living in the ones we have
just defined.
\begin{defi}
A $\textbf{colored cone}$ inside $V$ with colors in $\md$ is a couple
$(\cat,\mf)$ with $\mf\subseteq \md$, and satisfying :
\begin{itemize}
\item $\cat$ is a cone generated by $\rho(\mf)$ and finitely many elements in
$\rho(\mv^G)$.
\item The relative interior of $\cat$ in $V$ meets $\rho(\mv^G)$.
\item The cone $\cat$ is strictly convex and $0\notin \rho(\mf)$.
\end{itemize}
A colored cone $(\cat',\mf')$ is called a $\textbf{face}$ of $(\cat,\mf)$ if
$\cat'$ is a face of $\cat$ and $\mf'=\mf\cap\rho^{-1}(\cat')$. A $\textbf{
colored fan }$ is a finite set $\me$ of colored cones satisfying :
\begin{itemize}
\item $(0,\emptyset)\in \me$.
\item Every face of $(\cat,\mf)\in\me$ belongs to $\me$.
\item There exists at most one colored cone
$(\cat,\mf)\in\me$ containing a given $\nu\in\rho(\mv^G)$ in its relative
interior.
\end{itemize}
\end{defi}
Let us now recall how a colored fan can be associated to an embedding $X$ of
$X_0$. The open subset $\open$ being affine, its complement $X\setminus \open$
is pure of codimension one, and thus a union of $B$-stable prime divisors. Let
$\omega$ be a $G$-orbit on $X$. We denote by $\mb_{\omega}$ the set of prime
divisors  $D\subseteq X\setminus \open$ with $\omega\subseteq D$, and by
$\mf_{\omega}$ the subset of $D\in\md$ with $\omega\subseteq \overline{D}$. The
elements of $\mf_{\omega}$ are called the $\textbf{colors}$ associated to the
orbit $\omega$. Let $\cat_{\omega}$ be the cone in $V$ generated by the
elements 
$$\rho(\nu_D),\quad D\in\mb_{\omega}$$
where $\nu_D$ is the normalized valuation of $\ka$ associated to the divisor
$D$. We are now able to state the main theorem of this section :
\begin{thm}\label{lunavust}
By mapping $X$ to 
$$\{(\cat_{\omega},\mf_{\omega}),\quad \omega\subseteq X\text{ is a $G$-orbit}\}$$
one gets a bijection between embeddings of $X_0$ and colored fans in $V$, with
colors in the set $\md$. We will denote by $\me_X$ the colored fan associated
to the embedding $X$ of $X_0$, and by $X_{\me}$ the embedding of $X_0$
associated to the colored fan $\me$.
\end{thm}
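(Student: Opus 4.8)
The plan is to follow the strategy of Luna and Vust, reducing the global classification to a local one and then gluing, the combinatorics of colored fans being exactly the bookkeeping of this process. The first step is to decompose an arbitrary embedding $X$ into elementary pieces. Call an embedding \emph{simple} if it has a unique closed $G$-orbit. For a closed orbit $Y \subseteq X$, the set $X_Y = \{x \in X : Y \subseteq \overline{Gx}\}$ is a $G$-stable open subset in which $Y$ is the unique closed orbit, so $X_Y$ is a simple open sub-embedding; since every orbit closure contains a closed orbit, the $X_Y$ (for $Y$ closed) cover $X$. It therefore suffices to (a) classify simple embeddings by colored cones, and (b) describe how simple embeddings glue, showing that the gluing data is precisely a colored fan.

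The heart of the argument is step (a). Here I would invoke the local structure theory of spherical varieties: around the closed orbit $Y$ of a simple embedding there is a parabolic $P \supseteq B$ with Levi factor $L$ and a $B$-stable affine open subset meeting $Y$, on which the embedding acquires a product decomposition reducing matters to an affine (essentially toric-like) model governed by an $L$-action. For such affine simple embeddings the correspondence with colored cones is read off from the algebra of regular functions through its $B$-eigenweights: the $G$-invariant valuations attached to the $G$-stable divisors through $Y$, together with the colors $\mf_Y$ (the images under $\rho$ of the $B$-stable, non-$G$-stable divisors containing $Y$), generate exactly the colored cone $(\cat_Y,\mf_Y)$. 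Injectivity holds because the weight monoid, hence the chart and hence $X_Y$, is reconstructed from $(\cat_Y,\mf_Y)$; surjectivity amounts to building an embedding realizing prescribed weight data and verifying normality and the orbit structure. The defining axioms of a colored cone—strict convexity, $0 \notin \rho(\mf)$, and relative interior meeting $\rho(\mv^G)$—are precisely the conditions ensuring that the construction yields a separated normal variety with $Y$ as a genuine closed $G$-orbit.

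For step (b) I would verify that the colored cones attached to the orbits of a fixed $X$ satisfy the colored-fan axioms: if $\omega' \subseteq \overline{\omega}$ then $\cat_{\omega'} \supseteq \cat_{\omega}$ and $(\cat_{\omega},\mf_{\omega})$ is a face of $(\cat_{\omega'},\mf_{\omega'})$, so the cones of the orbits specializing to a fixed closed orbit $Y$ are exactly the faces of the maximal cone $(\cat_Y,\mf_Y)$; and separatedness of $X$ forces the condition that at most one colored cone contains a given $\nu \in \rho(\mv^G)$ in its relative interior. Conversely, given an abstract colored fan, one attaches to each maximal colored cone the simple embedding produced in step (a) and glues two of these along the open sub-embedding determined by a common face; the face relation guarantees that these gluing maps are open immersions, and the uniqueness axiom guarantees that the result is separated. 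This produces a well-defined inverse $\me \mapsto X_{\me}$ to $X \mapsto \me_X$.

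The main obstacle is the local/simple case, and within it the surjectivity direction together with the control of $G$-invariant valuations that underlies the local structure theorem: one must know that $X_0$ carries only finitely many $B$-stable prime divisors, that every $G$-stable divisor of an embedding determines a point of $\rho(\mv^G)$, and that $\rho$ restricted to $\mv^G$ is injective with full-dimensional image. In the arbitrary-characteristic setting of Knop, the delicate point is that the characteristic-zero arguments of Luna and Vust, which integrate vector fields and use the Lie algebra, must be replaced by characteristic-free valuation-theoretic and representation-theoretic inputs. Granting these, the combinatorial matching above yields the asserted bijection; we refer to \cite{Kn} for the complete proof.
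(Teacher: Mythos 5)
Your outline is correct and matches the approach the paper itself relies on: the paper states this theorem as a recollection of the Luna--Vust/Knop classification and defers the proof entirely to \cite{Kn}, which proceeds exactly as you describe (reduction to simple embeddings via the open sets $X_{\omega,G}$, classification of simple embeddings by colored cones through the local structure theorem, then gluing controlled by the face and separatedness axioms of colored fans). Since your sketch, like the paper, ultimately cites \cite{Kn} for the hard local arguments, there is nothing to correct.
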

\begin{rem}
We will say that the embedding $X$ has no colors if, for every orbit $\omega$
of $G$ on $X$, the set $\mf_{\omega}$ is empty.
\end{rem}
We are not going to explain how an embedding of $X_0$ can be construct out of a
colored fan. The curious reader can find the recipe in \cite{Kn}.

In the rest of the section, we list some properties of spherical embeddings
that will be used later. Firstly, as in the toric case, one is able to say
whether an embedding of $X_0$ is quasi-projective by using the associated
colored fan (see \cite{Br2}).
\begin{prop}\label{kazi}
Let $\me$ be a colored fan. The embedding $X_{\me}$ is quasi-projective if and
only if there exists a collection $(l_{\cat,\mf})_{(\cat,\mf)\in\me}$ of linear
forms on $V$ satisfying :
\begin{itemize}
\item $\forall (\cat,\mf)\in\me,\quad\forall (\cat',\mf')\in\me,\quad
l_{\cat,\mf}=l_{\cat',\mf'}$ over $\cat\cap\cat'$.
\item $ \forall (\cat,\mf)\in\me,\quad \forall x\in \op{Int}(\cat)\cap \rho(\mv^G),\quad \forall (\cat',\mf')\in \me\setminus\{(\cat,\mf)\},\quad
l_{\cat,\mf}(x)>l_{\cat',\mf'}(x).$
\end{itemize}
In this situation, we say that the fan $\me$ is quasi-projective.
\end{prop}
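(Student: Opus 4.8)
The plan is to reduce quasi-projectivity of $X=X_{\me}$ to the existence of an ample $G$-linearized line bundle and to translate such bundles into the data $(l_{\cat,\mf})$, exactly as the toric Proposition \ref{quasi} translates quasi-projectivity into support functions; here the role of \cite{Da} is played by \cite{Br2}. Since $k$ is algebraically closed in this section, $G$ is connected, and $X$ is normal, every line bundle on $X$ acquires a $G$-linearization after passing to a suitable power. The first step is to record the dictionary from \cite{Br2}: the $B$-stable prime divisors of $X$ are the colors $D\in\md$ together with the $G$-stable prime divisors $X_i$, the latter being in bijection with the rays of the cones of $\me$ via $X_i\mapsto\rho(\nu_{X_i})$; and a $G$-linearized line bundle is $\mo_X(\delta)$ for a $B$-stable divisor $\delta=\sum_i a_iX_i+\sum_D n_DD$. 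On the affine chart attached to a colored cone $(\cat,\mf)$ such a $\delta$ is the divisor of a $B$-eigenfunction $f_\chi$, whence $\delta$ is Cartier precisely when, for each $(\cat,\mf)\in\me$, there is a linear form $l_{\cat,\mf}$ on $V$ with $a_i=l_{\cat,\mf}(\rho(\nu_{X_i}))$ on the rays of $\cat$ and $n_D=l_{\cat,\mf}(\rho(D))$ for $D\in\mf$ (conventions of \cite{Br2}). The requirement that the local equations glue is exactly the agreement of the $l_{\cat,\mf}$ on overlaps, i.e. the first bullet.

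For the ``if'' direction, I would start from a family $(l_{\cat,\mf})$ as in the statement and clear denominators so that each $l_{\cat,\mf}$ lies in $\mx$; this only rescales the data (replacing the eventual bundle by a power) and preserves both bullets. Reading off $a_i$ and $n_D$ from the $l_{\cat,\mf}$ as above, the first bullet makes these multiplicities independent of the chosen cone, so $\delta$ is a well-defined $B$-stable Cartier divisor and $\ml=\mo_X(\delta)$ carries a $G$-linearization. It then remains to see that $\ml$ is ample, for which I would invoke the ampleness criterion of \cite{Br2}: the piecewise-linear function $h$ on $|\me|$ with $h|_\cat=l_{\cat,\mf}$ defines an ample bundle exactly when it is strictly convex, meaning that on each cone the linear piece strictly dominates all the others on the region where the $G$-orbit structure is detected. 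This is precisely the second bullet, and strict convexity guarantees that the $B$-eigensections of $\ml$ of weights $l_{\cat,\mf}$ generate $\ml$ and separate the $G$-orbits, so $X$ is quasi-projective.

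For the ``only if'' direction I would run the argument backwards. If $X$ is quasi-projective, choose an ample line bundle, $G$-linearize a power, and write it as $\mo_X(\delta)$ for a $B$-stable Cartier divisor $\delta$. Cartier-ness produces the linear forms $l_{\cat,\mf}$ on each colored cone and their agreement on intersections, which is the first bullet; ampleness, through the criterion of \cite{Br2}, forces the strict inequalities of the second bullet. Passing back from $\mx$ to linear forms on $V$ over $\mq$ changes nothing, and the required family is obtained.

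\emph{The main obstacle} is the precise ampleness criterion, and in particular the fact that the strict inequality of the second bullet is tested on $\op{Int}(\cat)\cap\rho(\mv^G)$ rather than on all of $\op{Int}(\cat)$. This reflects the asymmetry between the $G$-stable divisors $X_i$, whose directions fill the valuation cone $\rho(\mv^G)$ and which govern the incidence of $G$-orbits one must separate, and the colors, which are $B$-stable but not $G$-stable and do not contribute further orbits. Establishing the equivalence between ampleness of $\ml$ and strict convexity restricted to the valuation cone, with the correct sign conventions attaching $a_i$ to rays and $n_D$ to colors, is where the genuine content of \cite{Br2} enters and is the step most in need of care.
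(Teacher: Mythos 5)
The first thing to say is that the paper contains no proof of Proposition \ref{kazi}: it occurs in the section of recollections on the Luna--Vust theory, and the paper simply refers to \cite{Br2} for it, just as it refers to \cite{Kn} for Theorem \ref{lunavust}. So there is no internal argument to compare yours with; the only meaningful comparison is with the argument of \cite{Br2}, which is also what you are reconstructing. In outline your reduction is the right one and is the one carried out there: quasi-projectivity amounts to the existence of an ample line bundle; on a spherical variety every line bundle is $\mo_X(\delta)$ for a $B$-stable divisor $\delta$; $\delta$ is Cartier if and only if, near each $G$-orbit, it is the divisor of a $B$-eigenfunction, whose weight supplies the linear form on the corresponding colored cone; and ampleness is detected by a convexity condition on these data.

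As a proof, however, your text has a genuine gap, and it sits exactly where you yourself place ``the main obstacle''. You assert that the ampleness criterion of \cite{Br2} is strict convexity and that this ``is precisely the second bullet''. It is not, without further argument. The criterion of \cite{Br2} also imposes conditions at the colours: for a closed orbit $\omega$ with character $\chi_\omega$ and a colour $D$ with $\omega\nsubseteq\overline{D}$, one needs $n_D>\langle\chi_\omega,\rho(\nu_D)\rangle$, and this inequality is tested at the point $\rho(\nu_D)$, which in general lies outside $\rho(\mv^G)$ and outside the relative interiors of the cones, hence is not covered by the second bullet. To pass from the two bullets of the statement to an ample divisor one must (a) exploit the freedom in the coefficients $n_D$ of the colours belonging to no $\mf_\omega$ (take them large), and (b) handle the colours belonging to some $\mf_{\omega'}$, whose coefficients are forced to equal $l_{\cat_{\omega'},\mf_{\omega'}}(\rho(\nu_D))$, for which the required strict inequalities relative to the other cones must be derived or arranged, possibly after modifying $\chi_\omega$ by characters orthogonal to $\cat_\omega$ (the Cartier data determine $\chi_\omega$ only modulo $\cat_\omega^{\perp}$). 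Symmetrically, in the ``only if'' direction, Cartier-ness gives agreement of the characters only at the images of the common divisors, while the first bullet demands agreement on all of $\cat\cap\cat'$; two colored cones of a colored fan may intersect in a set larger than a common face once one leaves $\rho(\mv^G)$, so this step also needs justification. None of this bridging work appears in your text: it is delegated back to \cite{Br2}, so that what you have written is an expanded citation --- the same status as the paper's own treatment --- rather than a proof of the equivalence as stated.
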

We now give some results of local nature on spherical embeddings. Knowing the
local structure of toric varieties (as explained in Section \ref{splitt}) was a
crucial point in order to prove Theorem \ref{criterion}. A toric variety $X$
under $T$ is covered by open affine $T$-stable subset. This fact is not true on
a spherical embedding $X$ of $X_0$, but we still have a nice atlas of affine
charts at our disposal. If $\omega$ is a $G$-orbit on $X$, we denote by 
$$X_{\omega,G}:=\{y\in X, \quad \omega\subseteq \overline{G y}\}.$$
This is a $G$-stable open subset of $X$ containing $\omega$ as its unique
closed orbit. It is quasi-projective by a result of Sumihiro \cite{Sum}. We define 
$$X_{\omega,B}:=X_{\omega,G}\setminus \bigcup_{D\in \md\setminus \mf_{\omega}} \overline{D}.$$
This is a $B$-stable affine open subset of $X$. Moreover, the $G$-translates of
$X_{\omega,B}$ cover $X_{\omega,G}$. One has
$$k[X_{\omega,B}]=\{f\in k[\open],\quad\forall D\in \mb_{\omega} \quad\nu_D(f)\geqslant 0\}.$$
Let 
 $$P:=\bigcap_{D\in\md\setminus\mf_{\omega}}\op{Stab}(D).$$
 This is a
parabolic subgroup of $G$ containing $B$, and the open subset $X_{\omega,B}$ is
$P$-stable. The following theorem describes the action of $P$ in
$X_{\omega,B}$. See \cite{Br4} for a proof.
\begin{thm}\label{strucloc}
There exists a Levi subgroup $L$ of $P$ and a closed $L$-stable subvariety $S$
of $X_{\omega,B}$ containing $x_0$, such that the natural
map 
$$R_u(P)\times S\rw X_{\omega,B},\quad (g,x)\mapsto gx $$
is a $P$-equivariant isomorphism.
\end{thm}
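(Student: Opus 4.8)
The plan is to prove Theorem \ref{strucloc} by the classical eigensection method of Brion--Luna--Vust: embed the ambient $G$-chart in a projective space, cut out $X_{\omega,B}$ as the nonvanishing locus of a single $B$-eigensection, and read the product decomposition off the resulting affine $P$-module. Throughout $k$ is algebraically closed, so Sumihiro's results apply directly. First I would make the ambient geometry projective: since $X_{\omega,G}$ is a normal $G$-variety with the single closed orbit $\omega$, Sumihiro's theorem makes it quasi-projective and provides a $G$-equivariant immersion $\phi\colon X_{\omega,G}\rw\proj(W)$ for a finite-dimensional $G$-module $W$, with $\mo(1)$ restricting to an ample $G$-linearized line bundle $\ml$ (possibly after replacing $G$ by a finite covering, exactly as in the linearization step of the proof of Proposition \ref{Su}; this changes neither $R_u(P)$ nor the statement).

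Next I would produce the eigensection. The complement $Z:=X_{\omega,G}\setminus X_{\omega,B}=\bigcup_{D\in\md\setminus\mf_{\omega}}\overline{D}$ is a $P$-stable divisor, and $X_{\omega,B}$ is affine by hypothesis. Using ampleness of $\ml$, after passing to a suitable power I would find $s\in H^0(X_{\omega,G},\ml^{\otimes m})$, a $B$-eigenvector of some weight $\chi$, whose divisor is supported exactly on $Z$, so that $X_{\omega,B}=\{s\neq 0\}$. The stabilizer $\op{Stab}_G(ks)$ contains $B$, hence is a connected parabolic subgroup; being connected it fixes each irreducible component of $Z=\op{div}(s)$, so it coincides with $\bigcap_{D\in\md\setminus\mf_{\omega}}\op{Stab}(D)=P$. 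Via $\phi$ the section $s$ becomes a $B$-semiinvariant linear form $\xi$ on $W$ with $X_{\omega,B}=\phi^{-1}(\{\xi\neq 0\})$, landing in the affine chart $A:=\{w\in W:\xi(w)=1\}$.

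Finally I would extract the slice. Fix a Levi $L\subseteq P$ containing the maximal torus $T\subseteq B$, and write $P=R_u(P)\rtimes L$. Since $\chi$ is trivial on the unipotent group $R_u(P)$, the affine space $A$ is $R_u(P)$-stable and $R_u(P)$ acts on it by unipotent affine transformations fixing $\xi$. I would choose an $L$-stable affine subspace $\Lambda\subseteq A$ through $\phi(x_0)$, transverse to the $R_u(P)$-orbits so that $R_u(P)\times\Lambda\rw A$ is an isomorphism, and set $S:=X_{\omega,B}\cap\phi^{-1}(\Lambda)$; one then checks $S$ is closed, $L$-stable and contains $x_0$. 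It remains to prove that the action map $R_u(P)\times S\rw X_{\omega,B}$ is an isomorphism.

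I expect this last point to be the main obstacle, splitting into two parts. Constructing $s$ with nonvanishing locus exactly $X_{\omega,B}$ and identifying $\op{Stab}_G(ks)=P$ uses the description of $X_{\omega,B}$ as the complement of the non-adjacent colors together with the normality of $X$. More seriously, one must show the action map is an isomorphism and not merely a bijective $P$-equivariant morphism: the plan is to verify that its differential is everywhere an isomorphism, the key input being that $\xi$ is $R_u(P)$-invariant so that $R_u(P)$ acts freely along exactly the directions complementary to $\Lambda$, whence the map is \'etale, and being a bijective \'etale morphism onto the normal variety $X_{\omega,B}$ it is an isomorphism. The freeness of $R_u(P)$ on $X_{\omega,B}$, equivalently the transversality of $S$, is the genuinely spherical ingredient and the heart of the argument.
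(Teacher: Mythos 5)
The statement you set out to prove is not proved in the paper at all: it is the local structure theorem of Brion--Luna--Vust, quoted with the reference \cite{Br4}, so the comparison must be with that classical proof. Your skeleton does match it (Sumihiro quasi-projectivity plus linearization, a $B$-eigensection cutting out $X_{\omega,B}$, a product decomposition of the ambient affine chart, then intersect with $X$), but there is a genuine gap exactly at the point you yourself call ``the heart'', and you point at the wrong ingredient to fill it. The existence of an $L$-stable $\Lambda\subseteq A$ such that $R_u(P)\times\Lambda\rw A$ is an isomorphism cannot be folded into a ``choice'': it is the entire content of the theorem, and it is a statement of pure representation theory about the $G$-module $W$ in which sphericity of $X_0$ plays no role whatsoever (the local structure theorem holds for arbitrary $G$-varieties in characteristic $0$). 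The actual argument runs: since $\xi$ is a $B$-eigenvector it is killed by $\op{Lie}(R_u(P))$, so the nontrivial derivatives come from the opposite side; one sets $\Lambda=A\cap\{[w]:\ \eta(w)=0 \ \forall \eta\in \op{Lie}(R_u(P^-))\cdot\xi\}$ (an $L$-stable linear condition, since $L$ normalizes $R_u(P^-)$ and scales $\xi$), grades $W$ by a one-parameter subgroup $\lambda$ of the center of $L$ with $P=\{g:\lim_{t\rw 0}\lambda(t)g\lambda(t)^{-1}\text{ exists}\}$, and observes that the action map $R_u(P)\times\Lambda\rw A$ is triangular with identity diagonal for this grading; bijectivity and \'etaleness both fall out of that triangularity. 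Your plan --- freeness of $R_u(P)$ plus ``bijective \'etale onto a normal variety is an isomorphism'' --- presupposes bijectivity, and neither surjectivity nor the differential computation follows from the $R_u(P)$-invariance of $\xi$ alone; hunting for a ``genuinely spherical ingredient'' to supply them would send you down a dead end.

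There is also a logical inversion worth flagging: once the ambient isomorphism $R_u(P)\times\Lambda\rw A$ is established, the statement for $X$ is formal --- an $R_u(P)$-stable (locally) closed subvariety of $R_u(P)\times\Lambda$ is automatically of the form $R_u(P)\times S$ with $S$ its trace on $\Lambda$ --- so the step you defer to the end (``it remains to prove the action map $R_u(P)\times S\rw X_{\omega,B}$ is an isomorphism'') costs nothing, while the step you treat as a choice is everything; nothing about $X$ can help prove the ambient statement. Two smaller but real gaps: producing a $B$-eigensection $s$ with divisor supported exactly on $Z$ is not just Lie--Kolchin applied to the sections vanishing on $Z$, since a $B$-eigenvector there may vanish on extra divisors through $\omega$ (the standard fix lifts a highest-weight section from the closed orbit using complete reducibility in characteristic $0$, as in \cite{Kn}); and you only prove $\op{Stab}_G(ks)\subseteq P$, whereas the theorem as stated needs equality, the reverse inclusion requiring an argument that $p\cdot s/s$ is constant for $p\in P$ (Rosenlicht on invertible regular functions of $X_{\omega,G}$). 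In short: the scaffolding is the right one, but the representation-theoretic core that makes the theorem true is absent from the proposal.
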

Finally, let us introduce the class of horospherical homogeneous spaces :
\begin{defi}
A homogeneous space $X_0$ is said to be horospherical if the isotropy group of
$x_0$ contains a maximal unipotent subgroup of $G$.
\end{defi}
Any horospherical homogeneous space is spherical. Moreover, thanks to the
following proposition, we are able to recognize horospherical homogeneous
spaces among spherical ones in a very simple way (see \cite{Kn}).
\begin{prop}\label{horosph}
A spherical homogeneous space $X_0$ is horospherical if and only if
$\rho(\mv^G)=V$.
\end{prop}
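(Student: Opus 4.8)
The plan is to prove the two implications separately; the direction ``horospherical $\Rightarrow\rho(\mv^G)=V$'' is the straightforward one, and the converse is the substantial part.

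\emph{Horospherical implies $\rho(\mv^G)=V$.} Write $X_0=G/H$ with $H$ the isotropy group of $x_0$. If $H$ contains a maximal unipotent subgroup, then $P:=N_G(H)$ is a parabolic subgroup and $A:=P/H$ is a torus acting on $X_0=G/H$ on the right, commuting with the $G$-action; thus $A$ embeds into $\operatorname{Aut}_G(X_0)$. I would first identify $X^*(A)$ with $\mx$: a $B$-eigenfunction $f_\chi$ of weight $\chi$ is also an $A$-eigenfunction for the right $A$-action, which gives an isomorphism $X^*(A)\simeq\mx$ and hence $X_*(A)\otimes\mq\simeq V$. Then, to each cocharacter $\lambda\in X_*(A)$ I attach the valuation $\nu_\lambda$ recording the order in $t$ of $t\mapsto f(\lambda(t)\cdot)$. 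Since each $\lambda(t)$ is a $G$-automorphism, $\nu_\lambda\in\mv^G$, and by construction $\nu_\lambda(f_\chi)=\langle\chi,\lambda\rangle$, so $\rho(\nu_\lambda)=\lambda$. Letting $\lambda$ range over $X_*(A)$ yields $\rho(\mv^G)\supseteq X_*(A)\otimes\mq=V$, whence equality.

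\emph{$\rho(\mv^G)=V$ implies horospherical.} Here I would apply the local structure theorem (Theorem \ref{strucloc}) to the open orbit $\omega=X_0$ itself, for which $X_{\omega,B}=\open$. It produces a parabolic $P\supseteq B$, a Levi $L$ of $P$, and a closed $L$-stable $S\subseteq\open$ with $\open\simeq R_u(P)\times S$ as $P$-varieties. Since $\open=Bx_0$ is a single $B$-orbit and $R_u(P)$ acts freely on the first factor, $S=Lx_0$ is a single orbit under $B_L:=B\cap L$; thus $S=L/(L\cap H)$ is an affine $L$-homogeneous space equal to one Borel orbit. Restriction of functions identifies the $B$-eigenfunctions of $k(X_0)$ with the $B_L$-eigenfunctions of $k(S)$ preserving weights, so the data $(\mx,V)$ for $(X_0,G)$ agrees with that for $(S,L)$, and $G$-invariant valuations correspond to $L$-invariant valuations of $k(S)$. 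This reduces the claim to: an affine, $L$-homogeneous $S$ that is a single Borel orbit and satisfies $\rho(\mv^L_S)=V$ must be a torus, i.e. $[L,L]\subseteq L\cap H$. Granting this, $L\cap H$ contains a maximal unipotent subgroup $U_L^-$ of $L$, and together with $R_u(P^-)\subseteq H$ (for the opposite parabolic $P^-$) this gives that $H$ contains the maximal unipotent subgroup $R_u(P^-)\cdot U_L^-$ of $G$, so $X_0$ is horospherical.

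The main obstacle is precisely this reduced statement for $S$, namely the two facts ``$\rho(\mv^L_S)=V\Rightarrow S$ is a torus'' and ``$R_u(P^-)\subseteq H$'': both require controlling the structure of $N_G(H)$ rather than just the open $B$-orbit. The clean conceptual input is Knop's description of the valuation cone $\rho(\mv^G)$ as a fundamental domain for the action on $V$ of a finite reflection group, the little Weyl group of $X_0$, generated by the reflections in the spherical roots. From this viewpoint $\rho(\mv^G)=V$ is equivalent to the little Weyl group being trivial, i.e. to the set of spherical roots being empty, which is exactly the horospherical condition; the spherical roots of $S$ are contributed by the simple factors of $L$ not absorbed into $L\cap H$, and they vanish precisely when $S$ is a torus. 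Since the present section only recollects the Luna--Vust--Knop theory, I would invoke this description directly from \cite{Kn} to close the converse, the reduction above making transparent why the horospherical case is the one where the cone fills all of $V$.
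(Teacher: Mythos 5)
First, a point of comparison: the paper does not prove Proposition \ref{horosph} at all. It is stated in the recollection Section \ref{warmup} as part of the Luna--Vust--Knop theory, with the indication ``(see \cite{Kn})''; the result is Knop's, and the paper simply cites it. So there is no proof in the paper to measure your argument against, only a citation.

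Your forward direction is correct and is the standard argument: for horospherical $X_0=G/H$ the normalizer $N_G(H)$ is parabolic, $A=N_G(H)/H$ is a torus acting on $X_0$ by $G$-equivariant automorphisms, $X^*(A)$ identifies with $\mx$, and the valuations attached to cocharacters of $A$ are $G$-invariant with $\rho(\nu_\lambda)=\lambda$, giving $\rho(\mv^G)\supseteq X_*(A)\otimes_{\mz}\mq=V$. This is genuine content beyond what the paper records. The converse, however, is where the gap lies. Your reduction via Theorem \ref{strucloc} leaves unproved exactly the substantive points, as you yourself acknowledge: that $G$-invariant valuations of $k(X_0)$ correspond to $L$-invariant valuations of $k(S)$, that $\rho(\mv_S^L)=V$ forces $S$ to be a torus, and that $R_u(P^-)\subseteq H$. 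The ``clean conceptual input'' you then invoke to close the argument is circular as stated: with the usual definition, the spherical roots are precisely the data cutting out $\rho(\mv^G)$ inside $V$, so ``the set of spherical roots is empty'' is a tautological reformulation of $\rho(\mv^G)=V$, and its equivalence with horosphericity is exactly the statement to be proved. Moreover, the fundamental-domain property of the valuation cone for the little Weyl group is a deeper theorem (due to Brion in characteristic $0$, and to Knop in later work), not contained in the cited \cite{Kn}. So your converse, in the end, amounts to citing Knop for the proposition itself. That is legitimate --- it is what the paper does, since this section is a recollection --- but it means your proposal should be read as a correct proof of one implication plus the same citation the paper makes, not as an independent proof of the equivalence.
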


\subsection{Galois actions}\label{galoisaction}
We return to an arbitrary field $k$ with algebraic closure $\kb$. Recall that
$\Gamma$ is the absolute Galois group of $k$, and $G$ is a connected reductive
algebraic group over $k$. Fix a spherical homogeneous space $X_0$ under the
action of $G$, and a Borel subgroup $B$ of $\Gkb$ such that $B x_0$ is open in
$X_0(\kb)$. In the previous section, we introduced some data attached to
$\Xokb$. In this section, we let the group $\Gamma$ act on these data.
\subsubsection{Action on $\ka$}
The group $\Gamma$ acts on $\ka$ by the following formula 
$$\forall \si\in\Gamma,\quad \forall f\in \ka,\quad\forall x\in X_0(\kb),\quad \si(f)(x)=\si(f(\si^{-1}(x))).$$
\begin{prop}\label{equiv2}
One has
$$\forall\si\in\g,\quad\forall g\in G(\kb),\quad\forall f\in\kbXo,\quad
\si(g f)=\si(g) \si(f).$$
\end{prop}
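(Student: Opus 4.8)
The plan is to verify the identity by evaluating both sides at an arbitrary geometric point $x\in X_0(\kb)$ and unwinding the two explicit formulas for the actions involved. Recall that the action of $G(\kb)$ on $\ka$ is given by the standard convention $(gf)(x)=f(g^{-1}x)$, while the Galois action is the one introduced at the start of this section, namely $\si(f)(x)=\si(f(\si^{-1}(x)))$. The whole computation will rest on a single structural fact: since $X_0$ is a $G$-variety over $k$, the action morphism $G\times X_0\rw X_0$ is defined over $k$, hence $\g$-equivariant. Concretely this means
$$\forall \si\in\g,\quad\forall h\in G(\kb),\quad\forall y\in X_0(\kb),\quad \si(h y)=\si(h)\si(y),$$
and, since $\si$ restricts to a group automorphism of $G(\kb)$, one also has $\si(h)^{-1}=\si(h^{-1})$.

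First I would compute the left-hand side. Applying the two definitions in turn,
$$\si(gf)(x)=\si\big((gf)(\si^{-1}(x))\big)=\si\big(f(g^{-1}\si^{-1}(x))\big).$$
Next I would expand the right-hand side, using first the $G(\kb)$-action and then the Galois action,
$$(\si(g)\si(f))(x)=\si(f)\big(\si(g)^{-1}x\big)=\si\Big(f\big(\si^{-1}(\si(g)^{-1}x)\big)\Big).$$
Here is where the structural fact enters. Writing $\si(g)^{-1}=\si(g^{-1})$ and applying $\g$-equivariance of the action to $h=\si(g^{-1})$ and $y=x$, one obtains
$$\si^{-1}(\si(g^{-1})x)=\si^{-1}(\si(g^{-1}))\si^{-1}(x)=g^{-1}\si^{-1}(x).$$
Substituting this back shows that the right-hand side equals $\si(f(g^{-1}\si^{-1}(x)))$, which is exactly the expression obtained for the left-hand side. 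As $x$ was arbitrary, the two functions $\si(gf)$ and $\si(g)\si(f)$ coincide, which is the claim.

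The computation is entirely formal once the conventions are fixed; the only genuine content is the equivariance of the action morphism, and that is automatic because the action is defined over $k$. I therefore expect no real obstacle. The one point demanding care is the bookkeeping of inverses and of the direction of the action: one must consistently use $(gf)(x)=f(g^{-1}x)$ and remember that $\si$ commutes with inversion on $G(\kb)$, so that the $\si^{-1}$ produced by the Galois action on functions cancels against the outer $\si$ precisely after invoking equivariance of the action map.
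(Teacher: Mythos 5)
Your computation is correct and is exactly the ``straightforward computation'' the paper leaves implicit: unwind both actions pointwise, use that $\si$ is a group automorphism of $G(\kb)$, and invoke Galois-equivariance of the action morphism (which holds since the $G$-action on $X_0$ is defined over $k$). No discrepancy with the paper's approach; your version simply writes out the bookkeeping.
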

\begin{proof}
Straightforward computation.
\end{proof}
\subsubsection{Action on $\mv^B$}
If $\si\in \Gamma$, then the group $\si(B)$ is a Borel subgroup of $G$, so that
(see \cite{Bo} Chap.IV, Theorem $11.1$) there exists $g_{\si}\in G(\kb)$
satisfying
$$\si(B)=\gsi B \gsii.$$
Moreover, $\gsi$ is unique up to right multiplication by an element of $B$. We
let the group $\Gamma$ act on $\mv$ by 
$$\forall \si\in \Gamma,\quad \forall \nu\in\mv,\quad \forall f\in\ka,\quad \si(\nu)(f)=\nu(\si^{-1}(\gsi f)).$$

\begin{prop}\label{action}
We define in this way an action of $\Gamma$ on $\mv^B$ which does not depend on
the particular choice of the $(\gsi)_{\si\in\Gamma}$.
\end{prop}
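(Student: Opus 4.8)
The plan is to verify, in order, that the formula $\si(\nu)(f)=\nu(\si^{-1}(\gsi f))$ produces an element of $\mv^B$, that this element is insensitive to the choice of $\gsi$, and finally that the assignment $\nu\mapsto\si(\nu)$ obeys the axioms of a group action. Throughout, the two facts I would lean on are Proposition \ref{equiv2} (semilinearity of the $G$-action on $\ka$ under $\g$) and the defining relation $\si(B)=\gsi B\gsii$.

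First I would check that $\si(\nu)$ is again a valuation in $\mv$. For fixed $\si$ the map $\ka\rw\ka$, $f\mapsto\si^{-1}(\gsi f)$, is the composite of the $G$-action of $\gsi$, a $\kb$-automorphism of $\ka$, with the Galois automorphism $\si^{-1}$; it is therefore a field automorphism of $\ka$, semilinear over $\si^{-1}|_{\kb}$. Precomposing the $\mq$-valued valuation $\nu$ with such an automorphism yields a valuation, which is trivial on $\kb$ since $\gsi$ fixes the constants and $\nu$ is trivial on $\kb$. To see that $\si(\nu)$ is moreover $B$-invariant, I would compute $\si(\nu)(bf)$ for $b\in B(\kb)$: writing $\gsi(bf)=(\gsi b\gsii)(\gsi f)$ and applying Proposition \ref{equiv2}, one obtains $\si^{-1}(\gsi(bf))=\si^{-1}(\gsi b\gsii)\,\si^{-1}(\gsi f)$, and since $\gsi b\gsii\in\si(B)$ the factor $\si^{-1}(\gsi b\gsii)$ lies in $B(\kb)$; the $B$-invariance of $\nu$ then gives $\si(\nu)(bf)=\si(\nu)(f)$.

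The independence of the choice of $\gsi$ follows from the very same manipulation. Replacing $\gsi$ by another representative $\gsi b$ (recall $\gsi$ is determined up to right multiplication by $B$, as $B$ is self-normalizing) turns $\gsi f$ into $\gsi(bf)$, which by the computation above differs from $\gsi f$ only by a left factor in $B(\kb)$ after applying $\si^{-1}$; the $B$-invariance of $\nu$ absorbs this factor, so the value $\si(\nu)(f)$ is unchanged. I would stress that $B$-invariance of $\nu$ is indispensable at both of these steps.

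It remains to check the action axioms. For the identity, $e(B)=B$ forces $g_e\in N_G(B)=B$, whence $e(\nu)(f)=\nu(g_e f)=\nu(f)$ by $B$-invariance. The substantive point, and the step I expect to demand the most care, is the composition law, which rests on a cocycle relation for the $\gsi$. From $(\si\tau)(B)=\si(\tau(B))=\big(\si(\gtau)\gsi\big)B\big(\si(\gtau)\gsi\big)^{-1}$ one may choose $g_{\si\tau}=\si(\gtau)\gsi$; the freedom to do so is exactly what the already-established independence of choice licenses. Feeding this representative into the formula and using Proposition \ref{equiv2} repeatedly to distribute $\si^{-1}$ and $\tau^{-1}$ across products $g\cdot h$ with $g\in G(\kb)$, both $(\si\tau)(\nu)(f)$ and $\si(\tau(\nu))(f)$ collapse to $\nu\big((\tau^{-1}(\gtau)\,\tau^{-1}\si^{-1}(\gsi))\cdot\tau^{-1}\si^{-1}(f)\big)$. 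Their coincidence finishes the verification.
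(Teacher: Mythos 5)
Your proposal is correct and follows essentially the same route as the paper: both rest on Proposition \ref{equiv2} and the relation $\si(B)=\gsi B\gsii$ to move $B$-factors across the Galois action and absorb them by $B$-invariance of $\nu$. The only organizational difference is that you establish independence of the choice of $\gsi$ first and then take $g_{\si\tau}=\si(\gtau)\gsi$ as a representative, whereas the paper proves the cocycle lemma $\si(\gtau)\gsi=g_{\si\tau}b_{\si,\tau}$ with $b_{\si,\tau}\in B$ and absorbs $b_{\si,\tau}$ at the end; these amount to the same computation.
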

\begin{proof}
Fix $\nu\in\mv^B$, $\si\in \Gamma$ and $f\in\ka$. Let $b\in B$, and $b'\in B$
such that : $\gsi b^{-1}=\si(b')\gsi$. Then one has 
$$(b\si(\nu))(f)=\nu(\si^{-1}(\gsi b^{-1}f))=\nu(b'\si^{-1}(\gsi f))=\si(\nu)(f)$$
because $\nu\in\mv^B$. Thus we have proved that for all $\si\in\Gamma$ and
$\nu\in\mv^B$, $\si(\nu)\in\mv^B$. Now let us check that one defines an action
of $\Gamma$ on $\mv^B$. For this we will need the following lemma :
\begin{lemma}
$$\forall (\si,\tau)\in\g^2, \quad \exists  b_{\si,\tau}\in B \text{ with }\si(\gtau)\gsi=g_{\si\tau}b_{\si,\tau}.$$
\end{lemma}
\begin{proof}
Express $\si(\tau(B))$ in two different ways.
\end{proof}
Fix $(\si,\tau)\in\g^2$, $\nu\in\mv^B$ and $f\in\ka$. Then 
$$(\si\tau)(\nu)(f)=\nu(\tau^{-1}(\si^{-1}(g_{\si\tau}f)))=\nu(\tau^{-1}(\si^{-1}(\si(g_{\tau})\gsi b_{\si,\tau}^{-1})f)))=\nu(\tau^{-1}(g_{\tau}\si^{-1}(\gsi b_{\si,\tau}^{-1}f))).$$
This shows that 
$$(\si\tau)(\nu)=b_{\si,\tau}(\si(\tau(\nu)))=\si(\tau(\nu))$$
because $\si(\tau(\nu))\in \mv^B$. The fact that this action does not depend on
the choice of the $\gsi$ readily follows from the fact that we are working with
$B$-invariant valuations.

\end{proof}

\subsubsection{Action on $\mx$}
Let us denote by $\mx(B)$ the character lattice of the group $B$. For
$\si\in\g$ and $\chi\in\mx(B)$ we define 
$$\si(\chi):B\rightarrow \mathbb{G}_{m,\kb}, \quad b\mapsto \si(\chi(\gsiiii \si^{-1}(b)\gsiii)).$$
As in the proof of Proposition \ref{action}, one checks that this defines an
action of $\Gamma$ on $\mx(B)$ which does not depend on the choice of the
$(\gsi)_{\si\in\Gamma}.$ The following proposition shows that this action
restricts to an action on $\mx$ :
\begin{prop}\label{calcul}
Let $\chi\in\mx$ and $f_{\chi}\in\ka$ a $B$-eigenfunction of weight $\chi$. Fix
$\si\in\g$. Then $\si(\gsiii f)$ is a $B$-eigenfunction of weight $\si(\chi)$.
\end{prop}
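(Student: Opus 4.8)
The plan is to verify the proposition directly: setting $h:=\si(\gsiii f_\chi)$, I fix an arbitrary $b\in B$ and compute the transform $bh$, aiming to obtain exactly $\si(\chi)(b)\,h$. The three ingredients I will use are the semilinearity relation of Proposition \ref{equiv2}, namely $\si(gf)=\si(g)\,\si(f)$ for $g\in G(\kb)$ and $f\in\ka$; the defining relation $\si^{-1}(B)=\gsiii B\,\gsiiii$ for the element $\gsiii$; and the eigenfunction hypothesis $b'f_\chi=\chi(b')f_\chi$ for $b'\in B$. I also use that the $G(\kb)$-action on $\ka$ is $\kb$-linear and that $\si$ is semilinear over scalars, i.e. $\si(cf)=\si(c)\,\si(f)$ for $c\in\kb$, both of which are immediate from the defining formulas for the actions.

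First I push $b$ across $\si$. Applying Proposition \ref{equiv2} with the group element $\si^{-1}(b)$ and the function $\gsiii f_\chi$, and using $\si(\si^{-1}(b))=b$, I obtain
$$bh=\si(\si^{-1}(b))\,\si(\gsiii f_\chi)=\si\big(\si^{-1}(b)\,\gsiii\, f_\chi\big).$$
Next I factor the group element on the right: since $\si^{-1}(b)\in\si^{-1}(B)=\gsiii B\,\gsiiii$, the element $b':=\gsiiii\,\si^{-1}(b)\,\gsiii$ lies in $B$ and satisfies $\si^{-1}(b)\,\gsiii=\gsiii\, b'$. Using $\kb$-linearity of the action together with the eigenfunction relation, this gives
$$\si^{-1}(b)\,\gsiii\, f_\chi=\gsiii\,(b' f_\chi)=\chi(b')\,\gsiii f_\chi.$$
Finally, applying $\si$ and its scalar-semilinearity,
$$bh=\si\big(\chi(b')\,\gsiii f_\chi\big)=\si(\chi(b'))\,h=\si\big(\chi(\gsiiii\,\si^{-1}(b)\,\gsiii)\big)\,h=\si(\chi)(b)\,h,$$
where the last equality is precisely the definition of the character $\si(\chi)$ on $B$ recalled just before the statement. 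Since $h\neq0$ (because $f_\chi\neq0$ and both the $G$-action and $\si$ act bijectively on $\ka$), this exhibits $h$ as a $B$-eigenfunction of weight $\si(\chi)$.

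There is no genuine obstacle here; the proposition is essentially a bookkeeping identity. The only points demanding care are to apply Proposition \ref{equiv2} in the correct direction (moving $b$ inside $\si$ at the cost of replacing it by the conjugate $\si^{-1}(b)$), to confirm $b'=\gsiiii\,\si^{-1}(b)\,\gsiii\in B$ from the defining relation for $\gsiii$, and to match the resulting scalar $\si(\chi(b'))$ with the definition of $\si(\chi)$. It is exactly this last matching that motivated the particular formula chosen for the $\g$-action on $\mx(B)$.
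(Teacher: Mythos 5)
Your proof is correct, and it is essentially the paper's argument: the paper's own proof is just the words ``straightforward computation,'' and your calculation (pushing $b$ inside $\si$ via Proposition \ref{equiv2}, conjugating by $\gsiii$ using $\si^{-1}(B)=\gsiii B\gsiiii$, and matching the resulting scalar with the definition of $\si(\chi)$) is precisely the computation being alluded to.
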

\begin{proof}
Straightforward computation.
\end{proof}

\subsubsection{Action on $V$}
Being dual to $\mx$, the vector space
$V=\operatorname{Hom}_{\mathbb{Z}}(\mx,\mathbb{Q})$ inherits a linear action of
$\Gamma$ defined by 

$$\forall \varphi\in V, \quad\forall \si\in\g,\quad \forall\chi\in \mx,\quad
\si(\varphi)(\chi)=\varphi(\si^{-1}(\chi)).$$

\begin{prop}\label{equiva}
The map 
$$\rho :\mv^B\rightarrow V$$
is $\Gamma$-equivariant.
\end{prop}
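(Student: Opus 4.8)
The goal is to show that $\rho:\mv^B\to V$ is $\Gamma$-equivariant, i.e. that $\rho(\si(\nu))=\si(\rho(\nu))$ for every $\si\in\Gamma$ and every $\nu\in\mv^B$. The natural plan is to unwind both sides against an arbitrary weight $\chi\in\mx$, since $V=\operatorname{Hom}_{\mz}(\mx,\mq)$ and a map into $V$ is determined by its pairing with characters. Thus I would fix $\si\in\Gamma$, $\nu\in\mv^B$, and $\chi\in\mx$, and compute $\rho(\si(\nu))(\chi)$ and $\si(\rho(\nu))(\chi)$ separately, then match them.

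First I would expand the right-hand side using the definition of the $\Gamma$-action on $V$: $\si(\rho(\nu))(\chi)=\rho(\nu)(\si^{-1}(\chi))$, and then use the definition of $\rho$, namely $\rho(\nu)(\si^{-1}(\chi))=\nu(f_{\si^{-1}(\chi)})$, where $f_{\si^{-1}(\chi)}$ is any $B$-eigenfunction of weight $\si^{-1}(\chi)$. For the left-hand side, the definition of $\rho$ gives $\rho(\si(\nu))(\chi)=\si(\nu)(f_{\chi})$ for a $B$-eigenfunction $f_{\chi}$ of weight $\chi$, and then the definition of the $\Gamma$-action on $\mv^B$ gives $\si(\nu)(f_{\chi})=\nu(\si^{-1}(\gsi f_{\chi}))$. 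So the identity to establish reduces to
$$\nu(\si^{-1}(\gsi f_{\chi}))=\nu(f_{\si^{-1}(\chi)}).$$

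The key input bridging the two sides is Proposition \ref{calcul}, which identifies the weight of the transformed eigenfunction. Applying that proposition (with $\si^{-1}$ in place of $\si$, or reading it in the appropriate direction) tells us precisely that $\si^{-1}(\gsi f_{\chi})$ is a $B$-eigenfunction, and its weight is exactly $\si^{-1}(\chi)$. Since any two $B$-eigenfunctions of the same weight differ only by a nonzero scalar, and a valuation $\nu$ takes the same value on scalar multiples of a function, the value $\nu(\si^{-1}(\gsi f_{\chi}))$ depends only on the weight $\si^{-1}(\chi)$ and equals $\nu(f_{\si^{-1}(\chi)})$. This closes the computation. I would also note at the outset that $\si(\nu)\in\mv^B$ and $\si^{-1}(\chi)\in\mx$, so that both sides are well-defined; these facts are supplied by Proposition \ref{action} and Proposition \ref{calcul} respectively.

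The only genuine obstacle is bookkeeping with the cocycle elements $\gsi$ and the inverse $\si^{-1}$: one must apply Proposition \ref{calcul} in the correct variance and confirm that the $\gsi$-twist in the definition of the $\Gamma$-action on $\mv^B$ matches the $\gsiii$-twist in the definition of the action on $\mx$, so that the weights line up to give exactly $\si^{-1}(\chi)$ rather than some $B$-translate of it. Since all the quantities are $B$-invariant or $B$-eigen (and the actions were already shown independent of the choice of $\gsi$), this matching is forced, and no delicate case analysis is needed; the verification is a short, direct calculation once the definitions are substituted in the order above.
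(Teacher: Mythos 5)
Your proof is correct and follows essentially the same route as the paper's: unwind both sides against a character $\chi\in\mx$, use the definitions of the actions on $\mv^B$ and on $V$, and invoke Proposition \ref{calcul} (applied to $\si^{-1}$) to identify $\si^{-1}(\gsi f_{\chi})$ as a $B$-eigenfunction of weight $\si^{-1}(\chi)$, so that both sides compute the same value of $\nu$. The paper's proof is just a more condensed version of this chain of equalities, leaving implicit the point you spell out, namely that $\rho(\nu)$ evaluated at a weight is independent of the chosen eigenfunction because valuations are trivial on scalars.
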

\begin{proof}
Let $\nu\in\mv^B$, $\si\in\g$ and $\chi\in\mx$. Denote by $f_{\chi}\in\ka$ a
$B$-eigenfunction of weight $\chi$. By Proposition \ref{calcul}, $\si^{-1}(\gsi
f)$ is a $B$-eigenfunction of weight $\si^{-1}(\chi)$. Then 
$$\rho(\si(\nu))(\chi)=\si(\nu)(f_{\chi})=\nu(\si^{-1}(\gsi f))=\rho(\nu)(\si^{-1}(\chi)),$$
completing the proof.

\end{proof}

\subsubsection{Action on $\md$}
Let $\si\in\Gamma$. Observe that $\gsi^{-1}\si(\open)$ is an open $B$-orbit in
$\Xokb$. Thus $\gsi^{-1}\si(\open)=\open,$ and the map 
$$\md\rightarrow\md,\quad D\mapsto \si\cdot D=\gsii\si(D)$$
is a bijection. The elements of $\md$ being $B$-invariant divisors, this map
does not depend on the choice of $\gsi$. As in the proof of Proposition
\ref{action}, one checks that this defines an action of $\Gamma$ on $\md$.
Moreover, one has the following :
\begin{prop}\label{equivar}
The natural map 
$$\md\rw \mv^B,\quad D\mapsto \nu_D$$
is $\Gamma$-equivariant.
\end{prop}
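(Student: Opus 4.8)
The plan is to verify the pointwise identity $\nu_{\si\cdot D}=\si(\nu_D)$ for every $D\in\md$ and every $\si\in\g$, where $\si\cdot D=\gsii\si(D)$ is the action defined above and $\si(\nu_D)$ is computed via $\si(\nu)(f)=\nu(\si^{-1}(\gsi f))$. Since both sides lie in $\mv^B$ and a valuation is determined by its values on $\ka$, it suffices to check that the two valuations agree on an arbitrary $f\in\ka$.

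The argument rests on two elementary transformation rules for the normalized valuation $D\mapsto\nu_D$, both expressing that an isomorphism of $X_0$ preserves orders of vanishing. First, for the semi-linear automorphism $x\mapsto\si(x)$ of $\Xokb$, which carries the prime divisor $D$ onto $\si(D)$, I would record the invariance
$$\nu_{\si(D)}(\si(f))=\nu_D(f),\qquad\text{equivalently}\qquad \nu_{\si(D)}(f)=\nu_D(\si^{-1}(f)).$$
This is the only genuinely geometric input: the automorphism induced by $\si$ is an isomorphism of schemes over $\si:\spec(\kb)\rw\spec(\kb)$, it identifies the local ring at the generic point of $D$ with the one at $\si(D)$, sending a uniformizer to a uniformizer, and the integer order of vanishing is unaffected by the Galois twist on scalars. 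Second, for $g\in G(\kb)$ the left $G$-action on $\ka$ gives
$$\nu_{gD}(f)=\nu_D(g^{-1}f),$$
again because $x\mapsto gx$ is an automorphism carrying $D$ to $gD$.

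Granting these, the computation is a short chain. Writing $\si\cdot D=\gsii\si(D)$ and applying the $G$-rule with $g=\gsii$, then the $\si$-rule, one gets
$$\nu_{\si\cdot D}(f)=\nu_{\si(D)}(\gsi f)=\nu_D(\si^{-1}(\gsi f))=\si(\nu_D)(f),$$
the last equality being exactly the definition of the $\Gamma$-action on $\mv$. As this holds for all $f\in\ka$, we conclude $\nu_{\si\cdot D}=\si(\nu_D)$. One should also note that $\si\cdot D\in\md$, established in the preceding paragraph on the action on $\md$, so that $\nu_{\si\cdot D}$ is well defined and $\si(\nu_D)$ is indeed the normalized valuation attached to $\si\cdot D$, normalization being preserved by the isomorphisms involved.

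The main obstacle is purely one of bookkeeping: keeping the conventions for the left $G$-action $(g,f)\mapsto gf$ and the semi-linear $\Gamma$-action $(\si,f)\mapsto\si(f)$ consistent, so that the inverses and the insertion of $\gsi$ land in the right places to match the formula $\si(\nu)(f)=\nu(\si^{-1}(\gsi f))$. Once the two transformation rules are fixed with the correct variance, the proof is, exactly as in the proofs of Propositions \ref{action} and \ref{equiva}, a one-line substitution, and no further input on the structure of spherical varieties is needed.
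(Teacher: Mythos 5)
Your proof is correct and is essentially the paper's argument: the paper's one-line proof invokes exactly your geometric input, namely that an automorphism $\varphi$ of $\Xokb$ (semi-linear ones included) pushes a valuation centered on $Y$ to the valuation $f\mapsto\nu(\varphi^{-1}(f))$ centered on $\varphi(Y)$. You merely factor the relevant automorphism $x\mapsto\gsii\si(x)$ into its semi-linear and $G$-translation parts and write out the resulting chain of identities, which the paper leaves implicit.
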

\begin{proof}
Indeed, if a valuation $\nu$ on $\ka$ has a center $Y$ on $\Xokb$ and if
$\varphi$ is an automorphism of $\Xokb$, then the valuation 
$$\varphi(\nu) :\ka\rw \mq,\quad f\mapsto \nu(\varphi^{-1}(f))$$
is centered on $\varphi(Y)$.
\end{proof}
\subsection{$k$-forms of embeddings}\label{kformspheric}
We assume in this section that $k$ is a perfect field. We keep the notations
from Section \ref{galoisaction}. If $X$ is an embedding of $\Xokb$, we call a
$\textbf{$k$-form}$ of $X$ an embedding of $X_0$ isomorphic to $X$ after
extending scalars to $\kb$. We obtain in this section a criterion for an
embedding $X$ of $\Xokb$ to admit a $k$-form, in terms of its associated
colored fan.
\begin{prop}
If a $k$-form of $X$ exists, then it is unique up to isomorphism.
\end{prop}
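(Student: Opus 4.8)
The statement is a Galois descent result, and the essential input is the rigidity of pointed embeddings already recorded above: if there exists a morphism between two embeddings, then it is unique. The plan is to exploit this rigidity to show that a $\kb$-isomorphism between any two $k$-forms of $X$ is automatically Galois-equivariant, and hence descends to $k$.

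Concretely, I would start from two $k$-forms $Y$ and $Y'$ of $X$, with base points $y_0\in Y(k)$ and $y_0'\in Y'(k)$. By the definition of a $k$-form, I may fix isomorphisms of embeddings $\phi\colon\Ykb\rw X$ and $\phi'\colon Y'_{\kb}\rw X$ over $\kb$. Composing them, I obtain an isomorphism of embeddings of $\Xokb$
$$\psi=(\phi')^{-1}\circ\phi\colon\Ykb\rw Y'_{\kb}.$$
For every $\si\in\Gamma$, I would then form the Galois twist ${}^{\si}\psi=\si\circ\psi\circ\si^{-1}$, using the semi-linear actions on $\Ykb$ and $Y'_{\kb}$ coming from the $k$-structures of $Y$ and $Y'$. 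The point is that ${}^{\si}\psi$ is again a morphism of embeddings of $\Xokb$: it is $\Gkb$-equivariant because the $G$-action is defined over $k$, it preserves base points because $y_0$ and $y_0'$ are $k$-rational and hence Galois-fixed, and it is compatible with the open immersion of $\Xokb$ because $X_0$, its base point, and the immersion are all defined over $k$.

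By rigidity, the two morphisms of embeddings $\psi$ and ${}^{\si}\psi$ from $\Ykb$ to $Y'_{\kb}$ must coincide, so $\psi$ is $\Gamma$-equivariant. Since $k$ is perfect, Galois descent for morphisms between $k$-varieties then shows that $\psi$ is defined over $k$, yielding an isomorphism $Y\rw Y'$ of embeddings over $k$. I expect the main point to be the verification that ${}^{\si}\psi$ remains a morphism of embeddings, so that rigidity genuinely applies, together with the passage, in the descent step, to a finite subextension of $\kb$ over which $Y$, $Y'$ and $\psi$ are all defined; the descent itself is harmless, since effectivity for morphisms between fixed $k$-varieties is automatic once Galois-equivariance is established.
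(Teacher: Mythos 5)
Your proof is correct and is essentially the paper's own argument: the paper likewise composes the two $\kb$-isomorphisms, invokes the uniqueness of morphisms between embeddings to conclude the resulting isomorphism is unchanged under Galois twisting, and deduces it is defined over $k$. Your write-up merely makes explicit the verification that the twist ${}^{\si}\psi$ is again a morphism of embeddings, which the paper leaves implicit.
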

\begin{proof}
Take $Y$ and $Z$ two $k$-forms of $X$. There exists an isomorphism of
embeddings 
$$f : Y_{\kb}\rw Z_{\kb}$$
because they are both isomorphic to $X$. But a morphism between two embeddings,
if exists, is unique. This shows that $f$ is unchanged under twisting by
$\Gamma$. In other words, $f$ is defined over $k$.
\end{proof}

The following proposition gives a characterization of embeddings of $\Xokb$
admitting a $k$-form :
\begin{prop}\label{char}
The embedding $X$ admits a $k$-form if and only if it satisfies the following
two conditions :
\begin{enumerate} [(i)]
\item The semi-linear action of $\Gamma$ on $\Xokb$ extends to $X$.
\item $X$ is covered by $\Gamma$-stable affine open sets.
\end{enumerate}
\end{prop}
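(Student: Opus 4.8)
The plan is to prove this equivalence in the style of the toric case (Proposition \ref{classical} and Theorem \ref{criterion}), using the classical Galois descent philosophy: an embedding $X$ over $\kb$ descends to $k$ precisely when a suitable semi-linear $\Gamma$-action exists and the quotient by this action can be formed in the category of schemes. First I would establish the \emph{necessity} of the two conditions. If a $k$-form $Y$ of $X$ exists, fix a $G_{\kb}$-equivariant isomorphism $\phi : X \rw Y_{\kb}$ of embeddings preserving base points. The natural $\Gamma$-action on $Y_{\kb}$ (coming from $Y$ being defined over $k$) transports via $\phi$ to a semi-linear action on $X$ that is compatible with the $G$-action and restricts to the canonical semi-linear action on $\Xokb$; this gives condition $(i)$. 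For condition $(ii)$, I would use that $Y$, being a variety over the perfect field $k$, is covered by affine open $k$-subsets (one can even take the $X_{\omega,G}$ of Theorem \ref{strucloc}, which are quasi-projective by Sumihiro's result, and cover by affines inside them). The preimages under $\phi$ of the base-changes of these $k$-open affines are $\Gamma$-stable affine open subsets covering $X$.

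For the \emph{sufficiency} direction, assume conditions $(i)$ and $(ii)$. Condition $(i)$ furnishes a semi-linear action of $\Gamma$ on $X$ extending the one on $\Xokb$. The goal is to show the quotient $X/\Gamma$ exists as a $k$-variety and is a $k$-form of $X$. The standard descent criterion (as invoked for the toric case in Proposition \ref{classical}) states that effective descent holds as soon as $X$ can be covered by $\Gamma$-stable quasi-projective open subsets; condition $(ii)$ gives precisely such a cover, since affine implies quasi-projective. I would then form the quotient $Y := X/\Gamma$, check that the descent of the $G_{\kb}$-action yields a $G$-action over $k$ (the descent data are $G$-compatible because the semi-linear action commutes appropriately with the group action, as encoded in the extension of the action on $\Xokb$), and verify that the base point $x_0 \in X_0(k)$ descends to a $k$-point of $Y$ making it an embedding of $X_0$. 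Normality of $Y$ follows from normality of $X$ by faithfully flat descent, and the open orbit $X_0$ descends because its defining semi-linear action is the canonical one.

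The main obstacle I expect is the descent argument itself, i.e. passing from a $\Gamma$-stable affine (or quasi-projective) cover to an actual $k$-scheme $Y$ with $Y_{\kb} \simeq X$. The subtlety is that Galois descent for schemes is effective for quasi-projective varieties but not for arbitrary ones; this is exactly why condition $(ii)$, rather than merely condition $(i)$, is needed, and it is the analogue of the role played by quasi-projectivity in the toric Theorem \ref{criterion}. Concretely, one descends each $\Gamma$-stable quasi-projective piece $U$ of the cover to a $k$-variety $U/\Gamma$ using effective descent for quasi-projective schemes, and then glues these local descents along their overlaps, which are again $\Gamma$-stable; the gluing is canonical because morphisms of embeddings are unique (as already noted in the excerpt), so the patching data automatically satisfy the cocycle condition. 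I would lean on the same reference and mechanism underlying Proposition \ref{classical}, adapting it from the toric setting to the spherical one; the reductive group action adds only the routine verification that all descent data are $G$-equivariant, which follows formally from the compatibility built into condition $(i)$.
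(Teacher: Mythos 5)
Your proposal is correct and takes essentially the same approach as the paper, whose entire proof consists of invoking classical Galois descent --- conditions (i) and (ii) are precisely the criterion for $X$ to admit a $k$-form \emph{as a variety} --- and then observing that the $\Gkb$-semi-linearity of the canonical action on $\Xokb$ forces that $k$-form to be naturally an embedding of $X_0$; your argument simply unfolds this black box (transport of structure for necessity, local descent plus gluing for sufficiency). One small correction: the canonicity of your gluing data follows from full faithfulness of Galois descent for morphisms (a descended morphism is unique), not from uniqueness of morphisms of embeddings, since the $\Gamma$-stable affine pieces are open subsets of $X$ that are not $G$-stable and hence are not themselves embeddings of $X_0$.
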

\begin{rem}\label{quasip}
Condition $(ii)$ is automatically satisfied if $X$ is quasi-projective or
covered by $\Gamma$-stable quasi-projective subsets. By a result of Sumihiro (\cite{Sum}), this is the
case if there is only one closed orbit of $\Gkb$ on $X$, or, in other words, if
the embedding is simple.
\end{rem}
\begin{proof}
These two conditions are satisfied exactly when $X$ admits a $k$-form as a
variety. Because the semi-linear action of $\Gamma$ on $\Xokb$ is
$\Gkb$-semi-linear in the sense that 
$$\forall \si\in\Gamma,\quad \forall g\in G(\kb),\quad \forall x\in X_0(\kb),\quad \si(gx)=\si(g)\si(x),$$
the $k$-form is naturally an embedding of $X_0$.
\end{proof}
Condition $(i)$ in this proposition can be made very explicit in terms of the
colored fan associated to $X$.
\begin{thm}\label{semi}
The $\Gkb$-semi-linear action of $\Gamma$ on $\Xokb$ extends to $X$ if and only
if the colored fan of $X$ is $\Gamma$-stable. In that case, for every
$\Gkb$-orbit $\omega$ on $X$ and every $\si\in\Gamma$ one has 
$$\si(\cat_{\omega})=\cat_{\si(\omega)}\text{ and }\si(\mf_{\omega})=\mf_{\si(\omega)}.$$
\end{thm}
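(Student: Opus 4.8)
The strategy follows the pattern of the toric Proposition~\ref{res}: the whole argument is driven by the $\g$-equivariance of $\rho$ (Proposition~\ref{equiva}) and of the map $D\mapsto \nu_D$ (Proposition~\ref{equivar}), together with the bijection of Theorem~\ref{lunavust}. Throughout, the relevant geometric operation is not $\si$ alone but the $\gsi$-corrected semi-linear automorphism $\si\cdot(-)=\gsii\circ\si$ of $\Xokb$, which sends the open $B$-orbit $\open$ to itself and hence preserves all the $B$-data; this is precisely the operation underlying the actions on $\md$, $\mv^B$ and $V$ defined in Section~\ref{galoisaction}. Note that on $G$-orbits the correction by $\gsii$ is invisible, since $\gsi\in G(\kb)$ preserves each $G$-orbit; thus $\si\cdot\omega=\si(\omega)$ for every $G$-orbit $\omega$.

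For the direct implication, assume the action extends and fix a $G$-orbit $\omega$ and $\si\in\g$. Then $\si(\omega)$ is again a $G$-orbit, so $(\cat_{\si(\omega)},\mf_{\si(\omega)})$ is a colored cone of $\me_X$, and it suffices to prove the two displayed formulas. For the colors, a divisor $D\in\md$ lies in $\mf_\omega$ exactly when $\omega\subseteq\overline{D}$; applying the semi-linear automorphism $\si$ and then translating by $\gsii$ (which fixes the $G$-orbit $\si(\omega)$) yields $\si(\omega)\subseteq\overline{\gsii\si(D)}=\overline{\si\cdot D}$, so $\si\cdot D\in\mf_{\si(\omega)}$; the reverse inclusion is symmetric, giving $\si(\mf_\omega)=\mf_{\si(\omega)}$. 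For the cone, $\si\cdot(-)$ carries the boundary divisors of $\mb_{\omega}$ bijectively onto those of $\mb_{\si(\omega)}$, and the center argument of Proposition~\ref{equivar} gives $\si(\nu_D)=\nu_{\si\cdot D}$ for each such $D$; combined with the $\g$-equivariance of $\rho$ this shows that $\si$ sends the generators $\rho(\nu_D)$ of $\cat_\omega$ to the generators $\rho(\nu_{\si\cdot D})$ of $\cat_{\si(\omega)}$, whence $\si(\cat_\omega)=\cat_{\si(\omega)}$. Since every colored cone of $\me_X$ is thereby carried to another colored cone of $\me_X$, the colored fan is $\g$-stable.

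For the converse, suppose $\me_X$ is $\g$-stable. Transporting the embedding $X$ through the semi-linear automorphism $\si\cdot(-)$ of $\Xokb$ produces a new embedding of $\Xokb$ whose colored fan, computed directly from the equivariances of $\rho$ and $D\mapsto\nu_D$, is the image $\si(\me_X)$ of $\me_X$ under the $\g$-action on colored cones. As $\me_X$ is $\g$-stable we have $\si(\me_X)=\me_X$, so by the bijection of Theorem~\ref{lunavust} this transported embedding is isomorphic, as an embedding of $\Xokb$, to $X$ itself. The resulting isomorphism is a semi-linear automorphism of $X$ lying over $\si$ and restricting to the given action on $\Xokb$; because a morphism of embeddings is unique when it exists, the family obtained as $\si$ runs over $\g$ automatically satisfies the cocycle relation and defines an extension of the semi-linear action to $X$. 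Concretely this extension can be built chart by chart: on the affine piece $X_{\omega,B}$ one uses the description $k[X_{\omega,B}]=\{f\in k[\open]:\nu_D(f)\geqslant 0\ \forall D\in\mb_{\omega}\}$, the corrected map $\si\cdot(-)$ transforming the conditions $\nu_D\geqslant 0$ into the conditions $\nu_{\si\cdot D}\geqslant 0$ cutting out $X_{\si(\omega),B}$, after which one extends by $\Gkb$-semi-linearity over the $G$-translates and patches across orbits.

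The crux of the proof is the compatibility $\si(\nu_D)=\nu_{\si\cdot D}$ tying the abstractly defined $\g$-actions on $\mv^B$, $V$ and $\md$ to the genuine geometric semi-linear action on $X$; once the bookkeeping of the $\gsi$-correction is handled, so that leaving and re-entering the $B$-frame is transparent and independent of the choice of $\gsi$, both implications reduce to the equivariances already recorded in Section~\ref{galoisaction} and to the Luna--Vust bijection.
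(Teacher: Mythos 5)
Your forward implication coincides with the paper's: both deduce $\si(\cat_{\omega})=\cat_{\si(\omega)}$ and $\si(\mf_{\omega})=\mf_{\si(\omega)}$ from the bijections $D\mapsto\si\cdot D$ on boundary divisors combined with the $\Gamma$-equivariance of $\rho$ and of $D\mapsto\nu_D$ (Propositions \ref{equiva} and \ref{equivar}). Your converse, however, is a genuinely different route. The paper does not conjugate the embedding and does not invoke the injectivity of the Luna--Vust correspondence: given the $\Gamma$-stable fan, it picks the orbit $\omega'$ whose colored cone is $(\si(\cat_{\omega}),\si(\mf_{\omega}))$, proves as a key lemma that $\si$ carries $\{\nu_D,\ D\in\mb_{\omega}\}$ onto $\{\nu_D,\ D\in\mb_{\omega'}\}$ --- which for $\Gkb$-stable divisors requires Lemma $2.4$ of \cite{Kn} (extremal rays of $\cat_{\omega}$ avoiding $\rho(\mf_{\omega})$ come from $\Gkb$-stable divisors) together with the injectivity of $\rho$ on $\mv^G$ --- and then extends $x\mapsto\gsii\si(x)$ explicitly from $\open$ to a morphism $X_{\omega,B}\rw X_{\omega',B}$ via the algebra description of these charts, spreading out by $\Gkb$-translates to all of $X$. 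You instead transport the whole embedding through $\si$, identify the fan of the conjugate embedding with $\si(\me_X)=\me_X$ (a computation which is your forward implication in relative form), and let the bijection of Theorem \ref{lunavust} produce the isomorphism, uniqueness of embedding morphisms then giving the cocycle identity; this is shorter and more conceptual, but it uses the full classification theorem as a black box exactly where the paper's argument stays chart-level and self-contained. One caution: your closing remark that the extension ``can be built chart by chart,'' with the conditions $\nu_D\geqslant 0$ transformed into $\nu_{\si\cdot D}\geqslant 0$ cutting out $X_{\si(\omega),B}$, glosses over precisely the difficulty the paper's lemma addresses --- in the converse direction the action does not yet exist on $X$, so $\si(\omega)$ and $\si\cdot D$ have only combinatorial meaning there, and matching the combinatorial images $\si(\nu_D)$ with the genuine boundary valuations of the target orbit is exactly the step that needs Knop's extremal-ray lemma; your main transport argument avoids this, but the chart-by-chart variant as written does not.
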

\begin{rem}
We say that a colored fan $\me$ is $\textbf{$\Gamma$-stable}$ if for every
colored cone $(\cat,\mf)\in\me$, the colored cone $(\si(\cat),\si(\mf))$ still
belongs to $\me$.
\end{rem}

\begin{proof}
Assume $(i)$ and let $\omega$ be a $\Gkb$-orbit on $X$. Fix $\si\in \Gamma$.
Observe that $\si(\omega)$ is also a $\Gkb$-orbit on $X$. By mapping a divisor
$D$ to $\si\cdot D$, one gets bijections 
$$\mb_{\omega}\rw \mb_{\si(\omega)}\text{ and } \mf_{\omega} \rw \mf_{\si(\omega)}.$$
Thus, 
$$(\si(\cat_{\omega}),\si(\mf_{\omega}))=(\cat_{\si(\omega)},\mf_{\si(\omega)})$$
because the map $\rho$ is $\Gamma$-equivariant (Proposition \ref{equiva}).

Assume now that the colored fan of $X$ is $\Gamma$-stable. Let $\omega$ be a
$\Gkb$-orbit on $X$. Fix $\si\in\Gamma$, and denote by $\omega'$ the
$\Gkb$-orbit on $X$ satisfying 
$$(\si(\cat_{\omega}),\si(\mf_{\omega}))=(\cat_{\omega'},\mf_{\omega'}).$$
\begin{lemma}
The automorphism $\si$ of $\mv^B$ sends the set $\{\nu_D, D\in\mb_{\omega}\}$
onto the set $\{\nu_D,D\in\mb_{\omega'}\}.$
\end{lemma}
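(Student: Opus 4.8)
\emph{Proof plan.} The plan is to split $\mb_{\omega}$ according to whether a divisor is $\Gkb$-stable, treat the two resulting families by different tools, and then reassemble. Concretely, a prime divisor $D\in\mb_{\omega}$ is either $\Gkb$-stable, in which case $\nu_D\in\mv^G$ and $D$ is a boundary divisor of the embedding, or it is not $\Gkb$-stable, in which case $D=\overline{D_0}$ is the closure of a color $D_0\in\md$ with $\nu_D=\nu_{D_0}$, and $\omega\subseteq\overline{D_0}$ forces $D_0\in\mf_{\omega}$. By the very definition of $\omega'$ one has $\si(\cat_{\omega})=\cat_{\omega'}$ and $\si(\mf_{\omega})=\mf_{\omega'}$; these two identities govern the two families. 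Throughout I would use that $\rho$ is $\Gamma$-equivariant (Proposition \ref{equiva}), that $D\mapsto\nu_D$ is $\Gamma$-equivariant on $\md$ (Proposition \ref{equivar}), and that $\rho$ is injective on $\mv^G$.

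For the colored part the matching is immediate. The non-$\Gkb$-stable divisors in $\mb_{\omega}$ are exactly the $\overline{D_0}$ with $D_0\in\mf_{\omega}$, so the corresponding valuations form the set $\{\nu_{D_0} : D_0\in\mf_{\omega}\}$. Proposition \ref{equivar} gives $\si(\nu_{D_0})=\nu_{\si\cdot D_0}$, and since $\si(\mf_{\omega})=\mf_{\omega'}$, as $D_0$ ranges over $\mf_{\omega}$ the divisor $\si\cdot D_0$ ranges over $\mf_{\omega'}$. Hence $\si$ carries the colored valuations of $\omega$ bijectively onto those of $\omega'$, which are precisely the valuations attached to the non-$\Gkb$-stable divisors of $\mb_{\omega'}$.

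For the boundary part I would first record that the $\Gamma$-action on $\mv^B$ preserves $\mv^G$: for $\nu\in\mv^G$, $g\in\Gkb$ and $\si\in\Gamma$, a short computation with the $\gsi$ entirely parallel to Proposition \ref{equiv2} shows $g\,\si(\nu)=\si(\nu)$, so $\si(\nu)\in\mv^G$; as $\Gamma$ acts by a group one gets $\si(\mv^G)=\mv^G$, and $\si$ respects the integral normalization because it acts $\mz$-linearly on the lattice dual to $\mx$. Now, by the Luna--Vust dictionary the $\Gkb$-stable divisors in $\mb_{\omega}$ correspond, via $D\mapsto\rho(\nu_D)$, to the rays of $\cat_{\omega}$ whose primitive generator lies in $\rho(\mv^G)$ and is not the image of a color. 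The map $\si$ is a linear automorphism of $V$ sending $\cat_{\omega}$ onto $\cat_{\omega'}$, preserving $\rho(\mv^G)$ and sending $\rho(\mf_{\omega})$ onto $\rho(\mf_{\omega'})$ (again by Proposition \ref{equiva} together with $\si(\mf_{\omega})=\mf_{\omega'}$); it therefore induces a bijection between the boundary rays of $\cat_{\omega}$ and those of $\cat_{\omega'}$. Since $\si(\nu_D)\in\mv^G$, since $\rho(\si(\nu_D))=\si(\rho(\nu_D))$, and since $\rho$ is injective on $\mv^G$, this forces $\si(\nu_D)=\nu_{D'}$ for the $\Gkb$-stable divisor $D'\in\mb_{\omega'}$ associated to the image ray.

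Combining the two parts gives $\si\bigl(\{\nu_D : D\in\mb_{\omega}\}\bigr)\subseteq\{\nu_D : D\in\mb_{\omega'}\}$, and applying the same reasoning to $\si^{-1}$ with the roles of $\omega$ and $\omega'$ exchanged yields the reverse inclusion, hence the asserted surjectivity. I expect the boundary part to be the main obstacle: the colored part is a direct consequence of Proposition \ref{equivar}, whereas the $\Gkb$-stable part requires knowing that a boundary divisor is recovered from its $G$-invariant valuation, that this valuation is pinned down by its image under the injective map $\rho|_{\mv^G}$, and that $\si$ genuinely preserves $\mv^G$ together with its integral normalization.
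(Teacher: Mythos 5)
Your proof is correct and follows essentially the same route as the paper's: the same split of $\mb_{\omega}$ into colors and $\Gkb$-stable divisors, the colors handled by Proposition \ref{equivar} together with $\si(\mf_{\omega})=\mf_{\omega'}$, the $\Gkb$-stable divisors handled by the dictionary between such divisors and the rays of $\cat_{\omega}$ avoiding $\rho(\mf_{\omega})$ (which the paper makes precise by citing Lemma $2.4$ of \cite{Kn}) plus injectivity of $\rho$ on $\mv^G$, and the same $\si^{-1}$ trick to upgrade the inclusion to a bijection. Your only addition is the explicit verification that $\Gamma$ preserves $\mv^G$, a point the paper leaves implicit.
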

\begin{proof}
Let $D\in \mb_{\omega}$. If $D$ is not $\Gkb$-stable, then $D\in \mf_{\omega}$.
In this case $\si\cdot D\in\mf_{\omega'}$, and thus Proposition \ref{equivar}
shows that 
 $$\si(\nu_D)\in \{\nu_{D'},\quad D'\in\mb_{\omega'}\}.$$
Assume now that $D$ is $\Gkb$-stable. We know from Lemma $2.4$ \cite{Kn} that
$\mq^+\rho(\nu_D)$ is an extremal ray of $\cat_{\omega}$ which contains no
element of $\rho(\mf_{\omega})$. The map $\rho$ being equivariant, this proves
that $\mq^+\rho(\si(\nu_D))$ is an extremal ray of $\cat_{\omega'}$ which
contains no element of $\rho(\mf_{\omega'})$. Using Lemma $2.4$ \cite{Kn} again
and the injectivity of $\rho$ on $\mv^G$, we get that $\si(\nu_D)=\nu_{D'}$,
for some $\Gkb$-stable divisor $D'$ in $\mb_{\omega'}$. So far we have proved
that $\si$ sends the set $\{\nu_D, D\in\mb_{\omega}\}$ into the set
$\{\nu_D,D\in\mb_{\omega'}\}$. Using this result for $\si^{-1}$ and
$\omega'$ instead of $\omega$, one obtains the lemma.
\end{proof}
Using the description of $\kb[X_{\omega,B}]$ and $\kb[X_{\omega',B}]$ given in
Section \ref{warmup} and the previous lemma, we get that the morphism 
$$\open\rw\open,\quad x\mapsto \gsii \si(x)$$
extends to a morphism 
$$X_{\omega,B}\rw X_{\omega',B}.$$
Let $U$ be the largest open subset of $X$ on which this morphism extends. The
action of $\Gamma$ on $\Xokb$ being $\Gkb$-semi-linear, $U$ is $\Gkb$-stable.
But $X$ is covered by the $\Gkb$-translates of $X_{\omega,B}$, $\omega$ being a
$\Gkb$-orbit on $X$. We conclude that $U=X$, completing the proof of the
theorem.
\end{proof}

Assuming that the condition $(i)$ of Proposition \ref{char} holds, we now make
$(ii)$ explicit.
\begin{thm}\label{criterion2}
Let $X$ be an embedding of $\Xokb$ with $\Gamma$-stable colored fan $\me_X$.
Then $X$ admits a $k$-form if and only if for every colored cone
$(\cat,\mf)\in\me_X$, the colored fan consisting of the cones
$(\si(\cat),\si(\mf))_{\si\in\Gamma}$ and their faces is quasi-projective.
\end{thm}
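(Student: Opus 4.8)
The plan is to mirror the proof of Theorem~\ref{criterion} from the toric setting, replacing cones by colored cones and $T$-stable affine charts by the $B$-stable affine charts $X_{\omega,B}$ furnished by Theorem~\ref{strucloc}. By Proposition~\ref{char}, since the colored fan is already assumed $\Gamma$-stable, condition~$(i)$ holds by Theorem~\ref{semi}, so the existence of a $k$-form is equivalent to condition~$(ii)$: that $X$ is covered by $\Gamma$-stable affine open sets, equivalently by $\Gamma$-stable quasi-projective subsets (Remark~\ref{quasip}). The whole proof therefore reduces to showing that this covering condition is equivalent to the stated quasi-projectivity of the sub-colored-fans $\{(\si(\cat),\si(\mf))_{\si\in\Gamma}\}$ together with their faces.

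First I would prove the easy (sufficiency) direction. Fix a $\Gkb$-orbit $\omega$ with associated colored cone $(\cat_\omega,\mf_\omega)$, and consider the open subset $U=\bigcup_{\si\in\Gamma}X_{\si(\omega),G}$. By Theorem~\ref{semi} the colored cone of $\si(\omega)$ is $(\si(\cat_\omega),\si(\mf_\omega))$, so $U$ is the embedding of $X_0$ whose colored fan is exactly the sub-fan generated by the $\Gamma$-orbit of $(\cat_\omega,\mf_\omega)$ and its faces. The hypothesis says this colored fan is quasi-projective, so by Proposition~\ref{kazi} the variety $U$ is quasi-projective. Because the $\Gkb$-semi-linear action permutes the $X_{\si(\omega),G}$ among themselves, $U$ is $\Gamma$-stable. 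Letting $\omega$ range over all $\Gkb$-orbits, these $U$ cover $X$, giving a cover by $\Gamma$-stable quasi-projective open subsets; hence condition~$(ii)$ of Proposition~\ref{char} holds and a $k$-form exists.

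For the converse (necessity), I would argue exactly as in the proof of Theorem~\ref{criterion}. Assume a $k$-form exists, so $X$ is covered by $\Gamma$-stable affine open sets. Fix a $\Gkb$-orbit $\omega$ and set $U=\bigcup_{\si\in\Gamma}X_{\si(\omega),G}$, a $\Gamma$-stable open subset. The closed $\Gkb$-orbits in $U$ form a single $\Gamma$-orbit (they are the translates $\si(\omega)$), so from the $\Gamma$-stable affine cover one extracts an affine open subset meeting every closed $\Gkb$-orbit of $U$. Then Proposition~\ref{Su}, applied with $G=\Gkb$ to the normal $\Gkb$-variety $U$, shows $U$ is quasi-projective; by Proposition~\ref{kazi} this says precisely that the colored fan of the $\Gamma$-orbit of $(\cat_\omega,\mf_\omega)$ (with faces) is quasi-projective. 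Running this over all orbits $\omega$ yields condition~$(ii)$ of the theorem.

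The main obstacle, and the point where the argument genuinely departs from the toric case, is the necessity direction: in the toric setting every point lies in a $T$-stable affine chart $X_\cat$, whereas here a spherical embedding need not be covered by $G$-stable affine opens. The substitute is the local structure theorem (Theorem~\ref{strucloc}) together with the fact that the $\Gkb$-translates of the $B$-stable affine chart $X_{\omega,B}$ cover the simple open $X_{\omega,G}$, and that $X_{\omega,G}$ contains $\omega$ as its unique closed orbit. I would need to check carefully that the $\Gamma$-stable affine cover restricts to give an affine subset of $U$ meeting every closed $\Gkb$-orbit, so that Proposition~\ref{Su} applies; this is where the coloring data $\mf_\omega$ must be tracked, since the charts $X_{\omega,B}$ depend on which colors are removed. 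Once this covering and closed-orbit bookkeeping is in place, the quasi-projectivity criterion of Proposition~\ref{kazi} translates the geometric statement into the combinatorial one verbatim.
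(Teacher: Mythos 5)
Your proof is structurally the same as the paper's: both directions translate the combinatorial hypothesis, via Proposition \ref{kazi} and Theorem \ref{semi}, into the statement that each open set $U=\bigcup_{\si\in\Gamma}X_{\si(\omega),\Gkb}$ is quasi-projective; sufficiency then follows from Remark \ref{quasip} and Proposition \ref{char}, and necessity from Proposition \ref{Su} applied to $U$ with an affine open meeting all of its closed orbits. So you have reproduced the intended argument.

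The one loose step is precisely the point you flag at the end, but your diagnosis of how to fix it is off. In the necessity direction you "extract" from the $\Gamma$-stable affine cover of $X$ an affine open meeting every closed orbit of $U$; however, a $\Gamma$-stable affine open $W$ of $X$ meeting $\omega$ need not be contained in $U$, and $W\cap U$ need not be affine, so as written Proposition \ref{Su} cannot be applied to $U$. The repair is not the local structure theorem or any bookkeeping of colors (Theorem \ref{strucloc} plays no role in this theorem; it is used earlier, in proving Theorem \ref{semi}). The paper's fix is one line: since $U$ is a $\Gamma$-stable open subset of $X$ and $X$ admits a $k$-form, $U$ descends to an open subvariety of that $k$-form, hence $U$ admits its own $k$-form and therefore its own cover by $\Gamma$-stable affine opens; any member of that cover meeting $\omega$ meets every $\si(\omega)$, i.e. every closed orbit of $U$, and Proposition \ref{Su} applies. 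This is what the paper means by "one can clearly replace $X$ by $\bigcup_{\si\in\Gamma}X_{\si(\omega),\Gkb}$." With that substitution, your argument coincides with the paper's proof.
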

\begin{proof}
The condition given in the proposition is equivalent to the following : for
every $\Gkb$-orbit $\omega$ on $X$, the open subset 
$$\bigcup_{\si\in\Gamma}X_{\si(\omega),\Gkb}$$
is quasi-projective. This set being $\Gamma$-stable, if this condition is
fulfilled then $X$ admits a $k$-form (see Remark \ref{quasip}). To prove the
converse statement, one can clearly replace $X$ by 
$$\bigcup_{\si\in\Gamma}X_{\si(\omega),\Gkb},$$
and thus suppose that maximal cones in $\me_X$ form a single orbit under the
action of $\Gamma$. But maximal cones correspond to closed orbits, so using
Theorem \ref{semi}, one deduces that closed orbits are permuted by $\Gamma$.
Because $X$ admits a $k$-form, there exists an affine open subset $U$ of $X$
meeting every closed $\Gkb$-orbit on $X$. We conclude by Proposition \ref{Su}.
\end{proof}

\subsection{Applications}\label{appli}

\subsubsection{Some situations where $(i)\Rightarrow (ii)$}
We keep notations from Section \ref{galoisaction}, and denote by $X$ an
embedding of $\Xokb$. In Proposition \ref{char} we introduced two conditions
called $(i)$ and $(ii)$ on $X$ which were reformulated in terms of the colored
fan $\me_X$ in Theorems \ref{semi} and \ref{criterion2}. In this section, we
prove that $(i)\Rightarrow (ii)$ under some additional assumptions on $G$,
$X_0$ or $X$..
\begin{prop}\label{nice}
In each of the following situations, one has $(i)\Rightarrow (ii)$ :
\begin{enumerate}
\item $X_0$ is split.
\item $X_0$ is of rank $1$.
\item $X_0$ is horospherical and of rank $2$.
\item $X_0$ is horospherical, and $G$ is split by a quadratic extension $K$ of
$k$.
\item $X$ has no colors, and $X_0$ is of rank $2$.
\item $X$ has no colors, and $G$ is split by a quadratic extension $K$ of
$k$.
\end{enumerate}
\end{prop}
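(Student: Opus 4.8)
The plan is to prove each of the six implications $(i)\Rightarrow(ii)$ by showing that the colored fan generated by a single orbit of colored cones $(\si(\cat),\si(\mf))_{\si\in\Gamma}$ is quasi-projective, using the criterion of Proposition \ref{kazi}. In several cases the argument is parallel to the toric situation, so I would organize the proof around two guiding principles: first, that a colored fan with very few maximal colored cones is automatically quasi-projective (analogous to Remark \ref{deuxprime}); and second, that when $\rho(\mv^G)=V$ the colored-fan theory collapses onto ordinary fan theory in $V$, so that the toric-case results of Section \ref{toric} apply verbatim.

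Concretely, I would treat the cases in groups. For case $(1)$, where $X_0$ is split, the base point $x_0$ is a $k$-point and the $\Gkb$-semi-linear action descends the whole orbit structure; here one can exhibit the linear forms $l_{\cat,\mf}$ directly from descent data, or observe that splitness forces the colored fan to already be defined over $k$, so quasi-projectivity follows from Sumihiro as invoked in Remark \ref{quasip}. For case $(2)$, rank $1$ means $V$ is one-dimensional, so the cone $\cat$ is either $\{0\}$ or a ray, and the single-orbit colored fan has at most two maximal colored cones; the same explicit construction as in Remark \ref{deuxprime} (take $l_{\cat,\mf}$ positive on one cone, zero on the other) produces the required linear forms. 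Cases $(3)$ and $(5)$ use $\dim V=2$: when $X_0$ is horospherical, Proposition \ref{horosph} gives $\rho(\mv^G)=V$, so colors impose no constraint and the colored fan is literally a two-dimensional fan, hence quasi-projective by Remark \ref{deux}; when $X$ has no colors, every $\mf_{\omega}$ is empty and again the colored fan reduces to an honest two-dimensional fan in $V$, so Remark \ref{deux} applies. Cases $(4)$ and $(6)$ use the quadratic hypothesis: $\Gamma$ has order $2$, so the single-orbit fan has at most two maximal cones, and — once colors are eliminated either by horosphericity (via Proposition \ref{horosph}) or by the no-colors assumption — Remark \ref{deuxprime} again yields quasi-projectivity.

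The main obstacle I anticipate is verifying that the second condition in Proposition \ref{kazi}, which quantifies over $x\in\op{Int}(\cat)\cap\rho(\mv^G)$ and over \emph{all} other colored cones $(\cat',\mf')$ (not merely those in a single face relation), is genuinely implied by the corresponding toric inequality when one passes from the colored setting to $V$. In the horospherical and no-colors cases this is where I must argue that the extra color data $\mf$ does not obstruct the existence of the separating linear forms: the key point is that strict convexity of each $\cat$ together with $0\notin\rho(\mf)$ guarantees that the colored cones meet only along common faces in $V$, so the purely geometric construction of the $l_{\cat,\mf}$ from the toric lemmas transfers. I would isolate this as the technical heart of the argument and handle the split case $(1)$ somewhat separately, since there the cleanest route is to note that the semi-linear action already descends and invoke Remark \ref{quasip} directly rather than constructing linear forms by hand.
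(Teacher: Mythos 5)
Your case decomposition is exactly the paper's (split case via triviality of the $\Gamma$-action on $\mx$ plus Sumihiro, rank $1$ via $\dim V=1$, cases $3$ and $5$ via Remark \ref{deux}, cases $4$ and $6$ via Remark \ref{deuxprime}), but the lemma you isolate as the ``technical heart'' is false, and it is precisely the point on which cases $3$--$6$ stand or fall. You claim that strict convexity of each $\cat$ together with $0\notin\rho(\mf)$ guarantees that the colored cones of a colored fan meet only along common faces in $V$. Every colored cone satisfies these two conditions by definition, so your claim would apply to \emph{all} colored fans; yet the paper's own $SU(2,1)$ example refutes it: there the two maximal colored cones $(\cat,\{D_1\})$ and $(\si(\cat),\{D_2\})$ intersect in a full-dimensional cone which is a face of neither (their overlap is allowed because it avoids $\rho(\mv^G)$). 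Worse, if your lemma were true, your own argument via Remark \ref{deuxprime} would show that this colored fan, having two maximal cones permuted by $\Gamma$, is quasi-projective --- contradicting the paper's theorem that $X_{\me}$ admits no $\mr$-form. The colored-fan axioms only forbid overlaps \emph{inside} the valuation cone $\rho(\mv^G)$; outside it, cones may cross arbitrarily.

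The correct substitute, which is what the paper uses, is containment in the valuation cone: in cases $3$ and $4$, horosphericity gives $\rho(\mv^G)=V$ by Proposition \ref{horosph}, and in cases $5$ and $6$, the absence of colors means each $\cat_{\omega}$ is generated by images $\rho(\nu_D)$ of $G$-invariant valuations, hence lies in $\rho(\mv^G)$. Once all cones lie in $\rho(\mv^G)$, the axiom that at most one colored cone contains a given point of $\rho(\mv^G)$ in its relative interior forces the relative interiors to be pairwise disjoint, and a face-closed collection of cones with pairwise disjoint relative interiors is a genuine fan; moreover $\op{Int}(\cat)\cap\rho(\mv^G)=\op{Int}(\cat)$, so the criterion of Proposition \ref{kazi} reduces to the toric criterion of Proposition \ref{quasi}, and Remarks \ref{deux} and \ref{deuxprime} then finish exactly as you intend. (Two smaller points: in cases $4$ and $6$ the absolute Galois group $\Gamma$ is not of order $2$; it acts on $V$ through a quotient of order at most $2$, which is what bounds the number of maximal cones in a single $\Gamma$-orbit fan. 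And in case $1$ you should make explicit that splitness lets you take $\gsi=1$ and that characters of a split Borel are Galois-fixed, which is why $\Gamma$ acts trivially on $\mx$ and hence fixes every colored cone.)
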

\begin{proof}
We suppose that condition $(i)$ is satified, and prove that $(ii)$ holds. Let
us first consider situation $1.$ We fix a split Borel subgroup $B$ of $G$ with
$Bx_0$ open in $\Xokb$. One can choose $\gsi=1$ for every $\si\in\Gamma$.
Looking at the very definition of the action of $\Gamma$ on $\mx$ and using the
fact that $B$ is split, one deduces that $\Gamma$ acts trivially on $\mx$. Let
$\omega$ be a $\Gkb$-orbit on $X$. For every $\si\in\Gamma$ one has 
 $$(\si(\cat_{\omega}),\si(\mf_{\omega}))=(\cat_{\omega},\mf_{\omega}).$$
But the colored fan consisting of $(\cat_{\omega},\mf_{\omega})$ and its faces
is quasi-projective, so condition $(ii)$ is satisfied.

We now turn to situation $2.$ One can check using Proposition \ref{kazi} and
the fact that $V$ is of dimension $1$ that $X$ is automatically
quasi-projective. Condition $(ii)$ is thus fulfilled.

In the remaining situations, observe that for every closed orbit $\omega$ of
$\Gkb$ on $X$ the cone $\cat_{\omega}$ is contained in $\rho(\mv^G)$ (this is
obvious if $X$ has no colors; if $X_0$ is horospherical use Proposition
\ref{horosph}), so that the collection of cones 
$$\{\cat_{\omega}, \text{ $\omega$ is an orbit of }\Gkb\text{ on }X\}$$
is a fan. Moreover, this fan is quasi-projective if and only if the colored fan
$\me_X$ is.

Every $2$-dimensional fan is quasi-projective, so condition $(ii)$ is satisfied
in situations $3$ and $5.$

If $G$ is split by a quadratic extension $K$ of $k$, then the Galois group
$\Gamma$ acts through a quotient of order $2$ on $V$, and thus for every orbit
$\omega$ of $\Gkb$ on $X$, the fan consisting of the cones
$(\si(\cat_{\omega}))_{\si\in\Gamma}$ and their faces has only one or two
maximal cones. By Remark \ref{deuxprime}, this fan is automatically
quasi-projective. We thus see that condition $(ii)$ is also satisfied in
situations $4$ and $6.$

\end{proof}
\begin{rem}
Situations $1,2,3$ and $4$ don't depend on $X$. This means that in these
situations, the embeddings of $X_0$ are classified by $\Gamma$-stable colored
fans. In the split case, the colored fan $\me_X$ is $\Gamma$-stable if and only
if for every $\Gkb$-orbit $\omega$ on $X$, the set of colors $\mf_{\omega}$ is
$\Gamma$-stable.
\end{rem}

\subsubsection{A spherical embedding with no $k$-form}
In this section $k=\mathbb{R}$. Thus $\kb=\mc$ and $\Gamma=\mz/2\mz$. We let
$\si$ be the non trivial element of $\Gamma$. We construct here a connected
reductive group $G$ over $\mr$, a spherical homogeneous space $X_0$ of rank $2$
under the action of $G$ and an embedding $X$ of $X_{0,\mc}$ whose colored fan
is $\Gamma$-stable, but which admits no $\mr$-form.
\subsubsection{The group $G$}
We denote by $E$ a $3$-dimensional $\mc$-vector space with an $\mr$-structure
$E_{\mr}$. Let $\varepsilon$ be a bilinear form on $E$ defined over $\mr$ and
of signature $(1,2)$. We denote by $q$ its associated quadratic form. If $g$ is
an element of $GL(E)$, we denote by $g^*$ the inverse of its adjoint. We
consider the semi-linear action of $\Gamma$ on $SL(E)$ given by 
$$\forall g\in SL(E),\quad \si(g)=\overline{g}^*.$$
Here the conjugation is relative to the $\mr$-form $SL(E_{\mr })$ of $SL(E)$.
One checks that $\Gamma$ acts by automorphisms of the group $SL(E)$, so that
this action corresponds to a $\mr$-form $G$ of $SL(E)$, which is isomorphic to $SU(2,1)$. Fix an isotropic line
$l$ in $E$ defined over $\mr$, and denote by $B$ the stabilizer of the complete flag 
$$l\subset l^{\perp}$$
in $E$. Then $B$ is a $\Gamma$-stable Borel subgroup of $SL(E)$. Thus $G$ is
quasi-split, and we can choose $g_{\si}=1$ in what follows. The character
lattice of $B$ is given by 
$$\mx(B)=(\mz \chi_1\oplus \mz \chi_2\oplus \mz \chi_3)/\mz
(\chi_1+\chi_2+\chi_3),$$ where the group $B$ acts through the character
$\chi_{1}$ on $l$,
 $\chi_2$ on $l^{\perp}/l$ and
 $\chi_3$ on $E/l^{\perp}$.

We denote by $V$ the dual $\op{Hom}_{\mz}(\mx(B),\mq)$. We thus have 
$$V=\{r_1\mu_1+r_2\mu_2+r_3\mu_3,\quad (r_1,r_2,r_3)\in\mq^3\text{ and }\quad r_1+r_2+r_3=0\}$$
where $\mz \mu_1\oplus \mz \mu_2\oplus \mz \mu_3$ is the dual lattice of $\mz
\chi_1\oplus \mz \chi_2\oplus \mz \chi_3$.
\subsubsection{The homogeneous space $X_0$}
Consider the following affine variety over $\mc$ :
$$Y=\{(p,\plan),\quad p\in E,\quad \plan\subset E\text{ of dimension }2,\quad p\notin \plan\}.$$
The group $SL(E)$ acts naturally and transitively on $Y$. Fix a point $p_0\in
E_{\mr}\setminus\{l\}$ with $q(p_0)=1$. We note $\plan_0=p_0^{\perp}$ and
$x_0=(p_0,\plan_0)$.
\begin{rem}\label{sl2}
The stabilizer of $x_0$ in $SL(E)$ is 
$$\{g\in G(\mc),\quad g(p_0)=p_0\text{ and }g(\plan_0)=\plan_0\},$$
and is therefore isomorphic to $SL_2$. The homogeneous space $Y$ is thus
isomorphic to $SL_3/SL_2$.
\end{rem}
We let the group $\Gamma$ act semi-linearly on $Y$ by 
$$\si(p,\plan)=(p',\overline{p}^{\perp})$$ where $p'\in \overline{\plan}^{\perp}$
satisfies $\varepsilon(\overline{p},p')=1$. We call $X_0$ the $\mr$-form of
this variety corresponding to this semi-linear action. There is no problem to
perform the quotient because $Y$ is an affine variety. Observe that $x_0\in
X_0(\mr)$.
\begin{prop}\label{sph}
The homogeneous space $X_0$ is spherical. More precisely, $B x_0$ is open in
$X_0(\mc)$.
\end{prop}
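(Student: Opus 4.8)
The plan is to prove the stronger, purely geometric statement that $Bx_0$ is open in $Y=X_0(\mc)$, which then gives sphericity directly from Definition \ref{defsf} (transitivity of $SL(E)$ on $Y$ and $x_0\in X_0(\mr)$ being already given). By Remark \ref{sl2} the stabilizer $H=\op{Stab}_{SL(E)}(x_0)$ is isomorphic to $SL_2$, so $Y\cong SL(E)/H$ has dimension $8-3=5$. Since $Y$ is irreducible and any orbit is locally closed, it suffices to show that $Bx_0$ has dimension $5$: an orbit of full dimension is dense, hence open in its closure $Y$. I will compute this dimension infinitesimally. Writing $\mathfrak{b}$ and $\mathfrak{h}$ for the Lie algebras of $B$ and $H$, the tangent space to the orbit at $x_0$ is the image of $\mathfrak b$ in $T_{x_0}Y\cong\mathfrak{sl}(E)/\mathfrak h$, so $Bx_0$ is open if and only if $\mathfrak b+\mathfrak h=\mathfrak{sl}(E)$. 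As $\dim\mathfrak b=5$, $\dim\mathfrak h=3$ and $\dim\mathfrak{sl}(E)=8$, this is equivalent to the identity $\mathfrak b\cap\mathfrak h=0$, which is what I will establish.

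First I will make the two subalgebras explicit. Because $p_0$ is anisotropic we have $E=\mc p_0\oplus\plan_0$, and any element of $\mathfrak{sl}(E)$ that annihilates $p_0$ and preserves $\plan_0$ must vanish on $\mc p_0$; hence $\mathfrak h=\{0\}\oplus\mathfrak{sl}(\plan_0)$. The algebra $\mathfrak b$ is that of the endomorphisms preserving the flag $l\subset l^\perp$.

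The geometric heart of the argument is the relative position of $p_0$ and the flag, and this is exactly where the signature $(1,2)$ enters. The key claim is that $p_0\notin l^\perp$, equivalently $l\not\subseteq\plan_0$. Indeed, completing the real isotropic line $l$ to a hyperbolic plane $H_0\supseteq l$ yields a Witt decomposition $E=H_0\perp H_0^\perp$ over $\mr$ in which $H_0$ has signature $(1,1)$ and the line $H_0^\perp$ is negative definite; since $l^\perp=l\oplus H_0^\perp$, the form $q$ is negative semidefinite on the real plane $l^\perp$. As $q(p_0)=1>0$, the real vector $p_0$ cannot lie in $l^\perp$.

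It remains to deduce $\mathfrak b\cap\mathfrak h=0$. I will work in a basis $(e_1,e_2,e_3)$ adapted to the flag, with $l=\mc e_1$, $l^\perp=\mc e_1\oplus\mc e_2$ and $\varepsilon(e_1,e_3)=\varepsilon(e_2,e_2)=1$ (all remaining products of basis vectors being zero), so that $\mathfrak b$ is the algebra of upper triangular traceless matrices and $\plan_0=p_0^\perp$ is the kernel of the functional $(c,b,a)$ when $p_0=ae_1+be_2+ce_3$. The claim $p_0\notin l^\perp$ reads $c\neq0$, and anisotropy reads $2ac+b^2=q(p_0)=1$. Imposing on an upper triangular $X$ that it kill $p_0$ and preserve $\plan_0$, one solves for the off-diagonal entries in terms of the single free diagonal parameter and finds that the last remaining constraint multiplies that parameter by the scalar $2ac+b^2=1\neq0$, forcing $X=0$. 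This gives $\mathfrak b\cap\mathfrak h=0$, so $Bx_0$ is open in $Y$ and $X_0$ is spherical. The one delicate point, and the expected obstacle, is the claim $p_0\notin l^\perp$: over $\mc$ one could perfectly well have $p_0$ orthogonal to $l$, in which case the $B$-orbit drops dimension, and it is precisely the definiteness forced by the signature $(1,2)$ that excludes this.
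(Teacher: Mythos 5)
Your proof is correct, but it takes a genuinely different route from the paper's. The paper argues globally: it exhibits the explicit subset $\open=\{(p,\plan)\in X_0(\mc):\ p\notin l^{\perp}\text{ and } l\nsubseteq \plan\}$, asserts (leaving the verification to the reader) that it is a single $B$-orbit, open in $X_0(\mc)$, and that $x_0\in\open$. You instead argue infinitesimally: identifying $\mathfrak h=\operatorname{Lie}(\op{Stab}(x_0))$ via Remark \ref{sl2} and reducing openness of $Bx_0$ to $\mathfrak b\cap\mathfrak h=0$ by a dimension count, which you then verify in an adapted basis; I checked the matrix computation (the last constraint does reduce to $x(2ac+b^2)=0$ with $2ac+b^2=q(p_0)=1$), and it goes through. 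The two proofs share the same geometric pivot, namely that $x_0$ is in general position relative to the flag ($p_0\notin l^{\perp}$, equivalently $l\nsubseteq\plan_0$): the paper needs it for the unproved claim $x_0\in\open$, while you isolate it and prove it by the Witt-decomposition/signature argument, which is a genuine plus of your write-up since it makes visible exactly where the signature $(1,2)$ is used. What your route gives up is the explicit description of the open orbit $\open$ and of its complement: the paper's next proposition (that $\md=\{D_1,D_2\}$ and that $\si$ exchanges them) is deduced directly from that description, so the paper's proof is doing double duty, whereas from your argument alone one would still have to determine $\open$ and the $B$-stable divisors afterwards.
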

\begin{proof}
As one can check, the subset 
$$\open:=\{(p,\plan)\in X_0(\mc),\quad p\notin l^{\perp}\text{ and } l\nsubseteq \plan\}$$
of $X_0(\mc)$ is an orbit of $B$, and is open in $X_0(\mc)$. Moreover $x_0\in\open$.
\end{proof}
\subsubsection{Data attached to $X_0$}
We define 
$$D_1:=\{(p,\plan)\in X_0(\mc),\quad p\in l^{\perp}\}\text{ and }D_2:=\{(p,\plan)\in X_0(\mc),\quad l\subseteq \plan\}.$$
These are two $B$-stable divisors on $X_0(\mc)$ and :
\begin{prop}
The set $\md$ equals $\{D_1,D_2\}$, and $\si$ exchanges $D_1$ and $D_2$.
\end{prop}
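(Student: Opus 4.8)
The plan is to prove two separate claims: first that $\md = \{D_1, D_2\}$, and second that the Galois action exchanges $D_1$ and $D_2$. For the first claim, recall that $\md$ is the set of prime divisors in $X_0(\mc) \setminus \open$. Since $\open = \{(p,\plan) : p\notin l^\perp \text{ and } l\nsubseteq\plan\}$ was identified in Proposition \ref{sph} as the open $B$-orbit, its complement is the locus where $p\in l^\perp$ \emph{or} $l\subseteq\plan$. First I would verify that $D_1$ and $D_2$ are indeed irreducible of codimension one in $X_0(\mc)$ (each is cut out by a single incidence condition), and that their union is exactly the complement of $\open$. Since $\md$ consists of the $B$-stable prime divisors in this complement, it then suffices to check that $D_1$ and $D_2$ exhaust the irreducible components, which follows once one sees that the complement has exactly these two components.

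For the second claim, I would compute $\si\cdot D_i = \gsii\si(D_i)$ directly, using that $\gsi = 1$ (the group $G$ is quasi-split, as established in the construction of $G$), so that $\si\cdot D = \si(D)$ for the semi-linear action on $Y$ defined by $\si(p,\plan) = (p',\overline{p}^\perp)$ with $p'\in\overline{\plan}^\perp$ and $\varepsilon(\overline{p},p')=1$. The heart of the matter is to track where the two incidence conditions go under this twisted conjugation. Concretely I would take a generic point $(p,\plan)\in D_1$, so $p\in l^\perp$, and show that its image $\si(p,\plan) = (p', \overline{p}^\perp)$ satisfies the defining condition of $D_2$, namely $l\subseteq \overline{p}^\perp$; and symmetrically that $(p,\plan)\in D_2$ maps into $D_1$.

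The key computation rests on the interplay between the line $l$ (defined over $\mr$, hence $\si$-stable as a subspace: $\overline{l}=l$) and orthogonality with respect to $\varepsilon$. The condition $l\subseteq\overline{p}^\perp$ is equivalent to $\varepsilon(\overline{p}, v)=0$ for all $v\in l$, i.e. $\overline{p}\in l^\perp$ (using that $l^\perp$ is also defined over $\mr$ since $l$ and $\varepsilon$ are). Thus $(p,\plan)\in D_1$ means $p\in l^\perp$, hence $\overline{p}\in l^\perp$, hence $l\subseteq\overline{p}^\perp$, which is exactly the $D_2$-condition on the image. Running the analogous argument in reverse, starting from $(p,\plan)\in D_2$ with $l\subseteq\plan$ and examining the first coordinate $p'\in\overline{\plan}^\perp$, shows the image lies in $D_1$. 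Since $\si$ is an involution on the finite set $\md=\{D_1,D_2\}$ and fixes neither $D_1$ nor $D_2$, it must exchange them.

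The main obstacle I expect is the careful bookkeeping of conjugate-linearity: the semi-linear action mixes the genuine conjugation on $E$ with the $\varepsilon$-orthogonality, and one must be attentive that $l^\perp$ and $l$ are $\si$-stable precisely because they are defined over $\mr$, whereas generic $p$ and $\plan$ are not. Getting the direction of each implication right, and confirming that the normalization $\varepsilon(\overline{p},p')=1$ does not interfere with the incidence conditions (it only fixes a scaling), is the delicate part; the rest is a routine unwinding of the definitions of $D_1$, $D_2$, and the action $\si$.
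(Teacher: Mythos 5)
Your proof is correct and follows essentially the same route as the paper, whose proof is just the one-line remark that the statement is "a direct consequence of the description of $\open$ and of the definition of the action of $\Gamma$ on $\md$"; you have simply filled in those details (complement of $\open$ equals $D_1\cup D_2$, and the computation with $\gsi=1$ showing $\si(D_1)\subseteq D_2$ and $\si(D_2)\subseteq D_1$ via reality of $l$, $l^\perp$ and $\varepsilon$). The key points you identify — quasi-splitness allowing $\gsi=1$, and $\mr$-rationality of $l$ and $\varepsilon$ making the incidence conditions swap under conjugation — are exactly what the paper's terse proof relies on.
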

\begin{proof}
This is a direct consequence of the description of $\open$ given in the proof
of Proposition \ref{sph} and of the definition of the action of $\Gamma$ on
$\md$.
\end{proof}
Before we continue, we need to specify a particular basis of $E_{\mr}$.
\begin{notation}
We denote by $e_1$ the unique vector in $l$ satisfying
$\varepsilon(e_1,p_0)=1$, by $e_2 $ the unique vector in $l^{\perp}\cap
\plan_0$ with $q(e_2)=1$ and by $e_3$ the vector $p_0-e_1$. The vectors
$e_1,e_2,e_3$ give a basis of $E$ defined on $\mr$ and  we denote by
$e_1^*,e_2^*,e_3^*$ the dual basis.
\end{notation}
With these notations, we have 
$$x_0=(e_1+e_3,\langle e_2,e_3\rangle)\text{ and } l^{\perp}=\langle e_1,e_2\rangle.  $$
We define the following two functions on $X_0(\mc)$ 
$$f_1 : (p,\plan) \mapsto e_3^*(p)\text{ and } f_2 : (p,\plan) \mapsto
\frac{\varphi_{\plan}(e_1)}{\varphi_{\plan}(p)}$$ where $\varphi_{\plan}$ is an
equation of the $2$-plane $\plan$. They are respectively the equations of $D_1$
and $D_2$, and $B$-eigenfunctions of weight $-\chi_3$ and $\chi_1$. We are now
able to prove :
\begin{prop}
The lattice $\mx$ is $\mx(B)$ itself. We have 
$$\rho(\nu_{D_1})=\mu_2-\mu_3,\quad \rho(\nu_{D_2})=\mu_1-\mu_2.$$
The automorphism $\si$ of $V$ is the reflection exchanging $\mu_2-\mu_3$
and $\mu_1-\mu_2$.
\end{prop}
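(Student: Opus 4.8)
The plan is to treat the three assertions in turn: the first two by direct computation with the two explicit $B$-eigenfunctions $f_1$ and $f_2$, and the last by invoking the $\g$-equivariance of $\rho$.

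First I would establish $\mx=\mx(B)$. Since $f_1$ and $f_2$ are $B$-eigenfunctions of weights $-\chi_3$ and $\chi_1$ respectively, both weights lie in $\mx$. In $\mx(B)$ one has $\chi_1+\chi_2+\chi_3=0$, so $-\chi_3=\chi_1+\chi_2$, and therefore $\chi_2=(-\chi_3)-\chi_1\in\mx$. Thus $\mx$ contains $\chi_1$ and $\chi_2$, which generate $\mx(B)$ (as $\chi_3=-\chi_1-\chi_2$); combined with $\mx\subseteq\mx(B)$ and $rk(X_0)=2=\op{rk}\mx(B)$, this forces $\mx=\mx(B)$.

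Next, for the images under $\rho$, I would use that $\rho(\nu)(\chi)=\nu(f_{\chi})$ for any $B$-eigenfunction $f_{\chi}$ of weight $\chi$, and write $\rho(\nu_{D_i})=r_1\mu_1+r_2\mu_2+r_3\mu_3$ with $r_j=\rho(\nu_{D_i})(\chi_j)$ and $r_1+r_2+r_3=0$ automatic, since $\rho(\nu_{D_i})\in V$. So I only need the four numbers $\nu_{D_i}(f_j)$. The diagonal values $\nu_{D_1}(f_1)=\nu_{D_2}(f_2)=1$ are exactly the statement that $f_i$ is the equation of $D_i$. For the off-diagonal ones I would argue geometrically on $Y=X_0(\mc)$: the function $f_1=e_3^*(p)$ is regular on $Y$ and vanishes only where $p\in l^{\perp}=\langle e_1,e_2\rangle$, i.e. precisely along $D_1$, so it is a unit at the generic point of $D_2$ and $\nu_{D_2}(f_1)=0$; likewise $f_2$ is regular on $Y$ because its denominator $\varphi_{\plan}(p)$ never vanishes (one has $p\notin\plan$ on $Y$), and it vanishes only where $\varphi_{\plan}(e_1)=0$, i.e. $l\subseteq\plan$, which is $D_2$, giving $\nu_{D_1}(f_2)=0$. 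Feeding these into the definition (using that $f_1$ has weight $-\chi_3$ and $f_2$ weight $\chi_1$) gives $r_1=0,\ r_3=-1$ for $D_1$, hence $r_2=1$ and $\rho(\nu_{D_1})=\mu_2-\mu_3$; and $r_1=1,\ r_3=0$ for $D_2$, hence $r_2=-1$ and $\rho(\nu_{D_2})=\mu_1-\mu_2$.

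Finally, for the reflection statement I would combine the $\g$-equivariance of $\rho$ (Proposition \ref{equiva}) with that of $D\mapsto\nu_D$ (Proposition \ref{equivar}) and the fact, shown just above, that $\si$ exchanges $D_1$ and $D_2$: this yields $\si(\rho(\nu_{D_1}))=\rho(\nu_{\si\cdot D_1})=\rho(\nu_{D_2})$, i.e. $\si(\mu_2-\mu_3)=\mu_1-\mu_2$, and symmetrically $\si(\mu_1-\mu_2)=\mu_2-\mu_3$. Since $\mu_2-\mu_3$ and $\mu_1-\mu_2$ are linearly independent they form a basis of the $2$-dimensional space $V$, and $\si$, being a linear involution (as $\g$ has order $2$) that swaps the two basis vectors, has eigenvalues $+1$ and $-1$ (with eigenvectors their sum and difference); it is therefore the reflection exchanging them. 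I expect the only genuinely delicate point to be the geometric verification that $f_1$ and $f_2$ do not vanish along $D_2$ and $D_1$ respectively — that is, correctly reading off the off-diagonal valuations — everything else being bookkeeping with the dual bases and the equivariance already proved.
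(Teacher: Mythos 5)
Your proof is correct and follows essentially the same route as the paper: deduce $\mx=\mx(B)$ from the fact that the eigenfunction weights $\chi_1$ and $-\chi_3$ generate $\mx(B)$, compute $\rho(\nu_{D_i})$ by evaluating the valuations $\nu_{D_i}(f_j)$, and get the reflection statement from the $\Gamma$-equivariance of $\rho$ and $D\mapsto\nu_D$ together with $\si$ exchanging $D_1$ and $D_2$. The only difference is that you spell out details the paper leaves implicit (the vanishing loci giving the off-diagonal valuations $\nu_{D_2}(f_1)=\nu_{D_1}(f_2)=0$, and the linear-algebra check that a linear involution swapping two basis vectors of the $2$-dimensional $V$ is the stated reflection), which is harmless.
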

\begin{proof}
The first point follows from the fact that $-\chi_3$ and $\chi_1$ generate
$\mx(B)$. Moreover,
$$\rho(\nu_{D_1})(-\chi_3)=\nu_{D_1}(f_1)=1\text{ and }\rho(\nu_{D_1})(\chi_1)=\nu_{D_1}(f_2)=0,$$
proving that $\rho(\nu_{D_1})=\mu_2-\mu_3$. We compute $\rho(\nu_{D_2})$ in the
same way. For the remaining point, recall that the automorphism $\si$ of $\md$
exchanges $D_1$ and $D_2$, and the map $\rho$ is equivariant.
\end{proof}
\begin{prop}
We have 
$$\rho(\mv^G)=\{r_1\mu_1+r_2\mu_2+r_3\mu_3\in V,\quad r_3\geqslant r_1\}.$$
\end{prop}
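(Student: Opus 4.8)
The plan is to work over $\kb=\mc$ on the explicit model $X_{0,\mc}=Y=\{(p,\plan):p\notin\plan\}$ and compute $\rho(\mv^G)$ by hand. On $Y$ introduce the rational functions $x_j:=e_j^*(p)$ and $u_i:=\varphi_{\plan}(e_i)/\varphi_{\plan}(p)$; then $f_1=x_3$, $f_2=u_1$, and expanding $p=\sum_i e_i^*(p)\,e_i$ and using linearity of $\varphi_{\plan}$ gives the single relation $\sum_{i=1}^3 u_i x_i=1$. For $\nu\in\mv^G$, writing $\rho(\nu)=r_1\mu_1+r_2\mu_2+r_3\mu_3$ and evaluating on $\chi_1$ and $-\chi_3$ yields $r_1=\nu(f_2)$ and $r_3=-\nu(f_1)$, so that $\nu(f_1f_2)=\nu(f_1)+\nu(f_2)=r_1-r_3$. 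Hence the desired description is equivalent to the two inclusions $\rho(\mv^G)\subseteq\{r_3\geqslant r_1\}$ and $\rho(\mv^G)\supseteq\{r_3\geqslant r_1\}$, the first of which amounts to $\nu(f_1f_2)\leqslant 0$ for every $\nu\in\mv^G$.

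For the inclusion $\subseteq$, which I regard as the conceptual heart, fix $\nu\in\mv^G$. The group $\Gkb=\op{SL}(E)$ contains the cyclic permutation matrix of the basis $(e_1,e_2,e_3)$, which has determinant $1$; this element permutes the $x_j$ among themselves and the $u_i$ among themselves, so $G$-invariance of $\nu$ forces $\nu(x_1)=\nu(x_2)=\nu(x_3)=:a$ and $\nu(u_1)=\nu(u_2)=\nu(u_3)=:b$. Each product $u_ix_i$ then has valuation $a+b$, and since a valuation of a sum is at least the minimum of the valuations, the relation $\sum_i u_ix_i=1$ gives $0=\nu(1)\geqslant a+b$. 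Therefore $\nu(f_1f_2)=\nu(x_3)+\nu(u_1)=a+b\leqslant 0$, i.e. $r_3\geqslant r_1$.

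For the inclusion $\supseteq$ I would exhibit enough invariant valuations. Embed $Y$ equivariantly in $\proj(E\oplus\mc)\times\proj(E^*)$, on which $\op{SL}(E)$ acts (trivially on the summand $\mc$); the two $\Gkb$-stable boundary divisors, namely $Z_\infty$ where $p$ degenerates to the hyperplane at infinity and the incidence divisor $Z_{\mathrm{inc}}=\{\varphi_{\plan}(p)=0\}$, give $G$-invariant divisorial valuations. Computing the orders of vanishing of $f_1$ and $f_2$ along each (in bihomogeneous coordinates) gives $\rho(\nu_{Z_\infty})=\mu_1-2\mu_2+\mu_3$, lying on the wall $r_1=r_3$, and $\rho(\nu_{Z_{\mathrm{inc}}})=\mu_2-\mu_1$, which is interior since $r_3-r_1=1>0$. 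Finally, because $G$ is quasi-split and one may take $g_{\si}=1$, one checks that $\mv^G$ is $\Gamma$-stable and $\rho$ is $\Gamma$-equivariant (Proposition \ref{equiva}); applying $\si$, which acts as the reflection exchanging $\mu_2-\mu_3$ and $\mu_1-\mu_2$, to $\nu_{Z_\infty}$ produces an invariant valuation with image $-(\mu_1-2\mu_2+\mu_3)$. Thus the convex cone $\rho(\mv^G)$ contains $\pm(\mu_1-2\mu_2+\mu_3)$ together with $\mu_2-\mu_1$, hence the full line $r_1=r_3$ and a point with $r_3>r_1$, and therefore the whole closed half-plane $\{r_3\geqslant r_1\}$. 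Combined with the first inclusion this gives equality.

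The step I expect to require the most care is the inclusion $\subseteq$: its only genuinely new input is the elementary but decisive observation that $G$-invariance equalizes the valuations of the $x_i$ and of the $u_i$, which is exactly what makes the affine relation $\sum_i u_ix_i=1$ usable. The risks there are bookkeeping ones — confirming that $r_1=\nu(f_2)$ and $r_3=-\nu(f_1)$, and that the chosen permutation lies in $\op{SL}(E)$ and acts as claimed. By contrast, the inclusion $\supseteq$ is routine once the two divisorial order computations are done, with the Galois symmetry furnishing the opposite wall ray $-(\mu_1-2\mu_2+\mu_3)$ for free.
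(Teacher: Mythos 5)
Your proof of the inclusion $\subseteq$ is essentially the paper's own argument: the paper uses the same cyclic permutation $\tau\in SL(E)$ and the identity $\tau(f_1)f_2+\tau^2(f_1)\tau^2(f_2)+f_1\tau(f_2)=1$, which is your relation $\sum_i u_ix_i=1$ written in terms of $\tau$-translates of $f_1$ and $f_2$; $G$-invariance equalizes the valuations of the translates and the ultrametric inequality gives $\nu(f_1)+\nu(f_2)\leqslant 0$, i.e.\ $r_3\geqslant r_1$. Where you genuinely diverge is the reverse inclusion. The paper invokes a structural result of Knop (\cite{Kn}): the dimension of the linear part of $\rho(\mv^G)$ equals the codimension of $\op{Stab}(x_0)$ in its normalizer; by Remark \ref{sl2} that normalizer is one dimension larger than the $SL_2$-stabilizer, so the linear part is a line, which by $\subseteq$ must be the wall $\{r_1=r_3\}$, and the quoted fact that $\rho(\mv^G)$ is a cone with nonempty interior then forces the whole half-plane. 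You instead exhibit invariant valuations explicitly: the divisorial valuations of the two $G$-stable boundary divisors of the compactification $\proj(E\oplus\mc)\times\proj(E^*)$, whose images $\mu_1-2\mu_2+\mu_3$ and $\mu_2-\mu_1$ are correct (I checked the order-of-vanishing computations: $\nu_{Z_\infty}(f_1)=-1$, $\nu_{Z_\infty}(f_2)=1$, $\nu_{Z_{\mathrm{inc}}}(f_1)=0$, $\nu_{Z_{\mathrm{inc}}}(f_2)=-1$), together with the Galois reflection $\si$ — which does preserve $\mv^G$, by Proposition \ref{equiv2} and the choice $\gsi=1$, and sends $\mu_1-2\mu_2+\mu_3$ to its negative — to obtain the opposite wall ray; convexity of $\rho(\mv^G)$ then yields the half-plane. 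Your route needs less of the general theory (only that $\rho(\mv^G)$ is a convex cone, not Knop's normalizer criterion nor the nonempty-interior statement) and has the merit of displaying explicit divisorial valuations spanning the cone on a natural equivariant compactification; the paper's route is shorter, trading your compactification and vanishing-order computations for a one-line identification of the normalizer. Both arguments are complete and correct.
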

\begin{proof}
Denote by $\tau$ the permutation $(1, 2, 3)$ and also by $\tau\in SL(E)$ the
automorphism sending $e_i$ to $e_{\tau(i)}$. Then one checks that
$$\tau(f_1)f_2+\tau^2(f_1)\tau^2(f_2)+f_1\tau(f_2)=1$$
in the field $\ka$. This proves that if $\nu\in\mv$ is $G$-invariant, then 
$$\langle\rho(\nu),\chi_1-\chi_3\rangle\leqslant 0.$$
So far we have proved $"\subseteq"$. For the reverse inclusion, we use the
following fact (see \cite{Kn}) : the dimension of the linear part of the cone
$\rho(\mv^G)$ is equal to the codimension of $\op{Stab}(x_0)$ in its
normalizer. Using Remark \ref{sl2} one sees that this normalizer is given by 
$$\{g\in G(\mc),\quad g(p_0)\in \langle p_0\rangle \text{ and }g(\plan_0)=\plan_0\}.$$
Thus the linear part of $\rho(\mv^G)$ is of dimension $1$, completing the proof
of the proposition.
 \end{proof}

\begin{figure}[!ht]
\centering
\includegraphics[width=8cm,height=4.8cm]{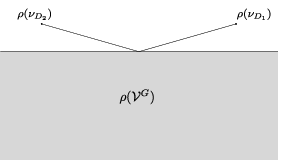}
\caption{The combinatorial data attached to $X_0$}
\end{figure}

\subsubsection{The embedding $X$}
We denote by $\cat$ the cone in $V$ spanned by $\mu_2-\mu_3$ and
$\mu_1-2\mu_2+\mu_3$. Let $\me$ be the colored fan in $V$ with colors in $\md$
whose maximal cones are $(\cat,\{D_1\})$ and $(\si(\cat),\{D_2\})$. The fan
$\me$ is $\Gamma$-stable. It is depicted in Figure $3.$

\begin{figure}[!ht]
\centering
\includegraphics[width=8cm,height=4cm]{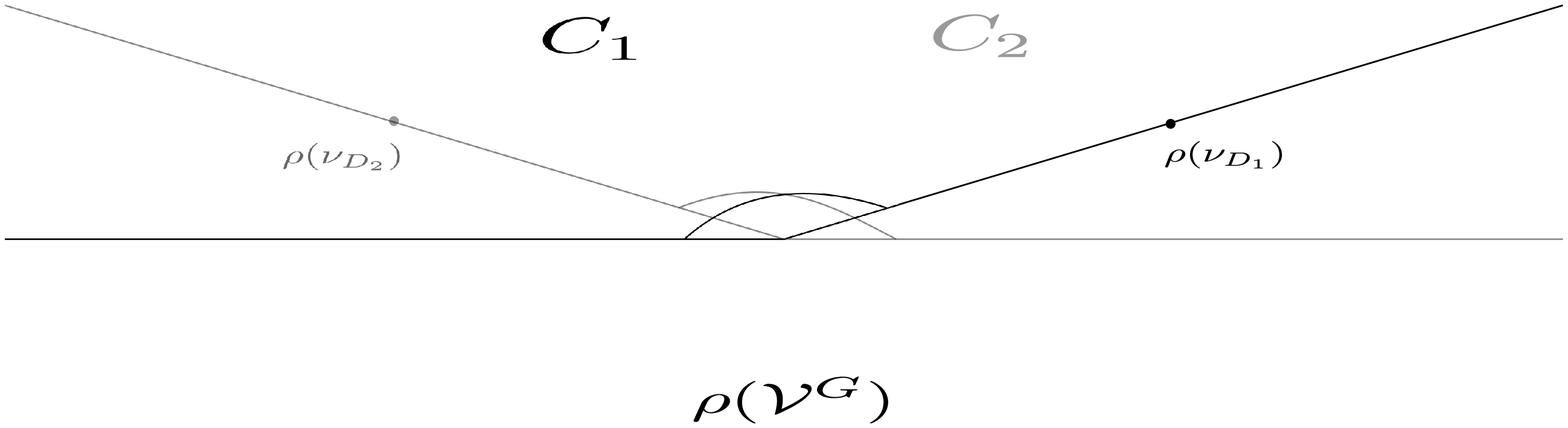}
\caption{The colored fan $\me$.}
\end{figure}

\begin{thm}
The embedding $X=X_{\me}$ of $X_{0,\mc}$ admits no $\mr$-form.
Moreover, $X$ is smooth.
\end{thm}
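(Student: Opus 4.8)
The plan is to derive both assertions purely combinatorially from the colored fan $\me$, using the criteria of Theorems \ref{semi} and \ref{criterion2} for the first assertion and the local structure theorem \ref{strucloc} for the second.

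Since $\me$ is $\Gamma$-stable, Theorem \ref{semi} shows that the semi-linear action of $\Gamma$ on $X_{0,\mc}$ extends to $X$, i.e.\ condition $(i)$ of Proposition \ref{char} holds; so only condition $(ii)$ can obstruct descent. By Theorem \ref{criterion2}, $X$ admits an $\mr$-form if and only if, for each colored cone of $\me$, the colored fan formed by its $\Gamma$-orbit and the faces is quasi-projective. The maximal colored cones $(\cat,\{D_1\})$ and $(\si(\cat),\{D_2\})$ are exchanged by $\si$ and thus form a single $\Gamma$-orbit, whose associated colored fan is $\me$ itself. Everything therefore reduces to proving that $\me$ is \emph{not} quasi-projective.

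For this I would argue by contradiction with Proposition \ref{kazi}. The decisive point, which has no counterpart in the toric two-cone situation of Remark \ref{deuxprime}, is that $\cat$ and $\si(\cat)$ overlap in a full-dimensional cone: one computes that $\cat\cap\si(\cat)$ is the cone spanned by the two color rays $\rho(\nu_{D_1})$ and $\rho(\nu_{D_2})$, and that this cone lies outside $\rho(\mv^G)$ — so the colored-fan axioms (in particular that at most one colored cone contains a given element of $\rho(\mv^G)$ in its relative interior) are respected. Now suppose linear forms $l_{\cat,\{D_1\}}$ and $l_{\si(\cat),\{D_2\}}$ as in Proposition \ref{kazi} existed. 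The compatibility condition forces them to agree on $\cat\cap\si(\cat)$; being two-dimensional, this intersection spans $V$, so the two forms are equal on all of $V$. But the separation condition requires $l_{\cat,\{D_1\}}(x)>l_{\si(\cat),\{D_2\}}(x)$ for every $x\in\op{Int}(\cat)\cap\rho(\mv^G)$, a set that is nonempty precisely because $(\cat,\{D_1\})$ is a genuine colored cone. This contradiction shows $\me$ is not quasi-projective, hence $X$ has no $\mr$-form.

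For smoothness I would argue locally. By Theorem \ref{strucloc} each affine chart $X_{\omega,B}$ is $P$-equivariantly isomorphic to $R_u(P)\times S$ with $R_u(P)$ an affine space, so smoothness of $X$ reduces to that of the slices $S$, a toric computation on the cones. By the $\Gamma$-symmetry exchanging the two maximal cones it suffices to treat $\cat$: in the basis $(\mu_1-\mu_2,\mu_2-\mu_3)$ of $\op{Hom}_{\mz}(\mx,\mz)$ its generators $\mu_2-\mu_3$ and $\mu_1-2\mu_2+\mu_3$ read $(0,1)$ and $(1,-1)$, of determinant $-1$; thus the cone is unimodular, the slice is an affine plane, and $X_{\omega,B}$ is smooth. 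I expect the non-quasi-projectivity step to be the main obstacle: the whole effect rests on correctly identifying the two colored cones' full-dimensional overlap and checking it avoids $\rho(\mv^G)$, this being exactly the colored phenomenon — impossible for a plain two-cone toric fan — that makes descent fail.
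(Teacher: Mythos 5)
Your handling of the first assertion is correct and is the paper's own route, with the details filled in: the paper disposes of non-quasi-projectivity in one line (``the two maximal cones are permuted by $\Gamma$ and meet''), whereas you make the mechanism explicit --- $\cat\cap\si(\cat)$ is the full-dimensional cone spanned by $\rho(\nu_{D_1})$ and $\rho(\nu_{D_2})$, so the compatibility condition of Proposition \ref{kazi} forces $l_{\cat,\{D_1\}}=l_{\si(\cat),\{D_2\}}$ as linear forms on the $2$-dimensional space $V$, contradicting the strict separation required at points of $\op{Int}(\cat)\cap\rho(\mv^G)$, a set which is nonempty by the colored-cone axioms. Combined with Theorems \ref{semi} and \ref{criterion2}, this is a complete proof that $X$ has no $\mr$-form.

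The smoothness argument, however, has a genuine gap. After correctly reducing, via Theorem \ref{strucloc} and translation by $G(\mc)$ and $\Gamma$, to smoothness of the slice $S$, you assert that this is ``a toric computation on the cones'' and conclude from the unimodularity of the generators of $\cat$ that $S$ is an affine plane. This step has no foundation: the colored cone $(\cat,\{D_1\})$ has a nonempty set of colors, so the chart $X_{\omega,B}$ is not toroidal and $S$ is \emph{not} the toric variety of $\cat$. Concretely, here $P=\op{Stab}(l)$ is a maximal parabolic whose Levi $L$ is isomorphic to $GL_2$ and $\dim R_u(P)=2$, so $S$ is a $3$-dimensional variety carrying a nonabelian $L$-action; your claim that $S$ is a surface already fails a dimension count, since it would give $\dim X=2+2=4$ instead of $\dim X_{0,\mc}=5$. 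Unimodularity of cones is a smoothness criterion for toric varieties and, more generally, for spherical embeddings \emph{without} colors; once colors are present no such determinant test is available in the Luna--Vust--Knop theory, and you neither prove nor cite one. (That colors genuinely matter is visible already in rank one: simple embeddings whose cone is the same ray can be smooth or singular depending on the color data.) What is actually needed --- and what the paper does in Lemmas \ref{lem} and \ref{lemm} --- is to identify $\mc[S]=\mc[X_{\omega,B}]^{R_u(P)}$ as the $L$-submodule of $\mc[Lx_0]$ generated by the $B\cap L$-eigenfunctions whose weights lie in the monoid generated by $\chi_1$ and $\chi_1-\chi_3$, and then to use multiplicity-freeness of $\mc[Lx_0]$ to show that $\mc[S]=\mc[f_2,f_1f_2,f_3f_2]$ is a polynomial ring in three variables, so that $S\simeq\mc^3$. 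Without an argument of this kind, which uses the color and the $L$-module structure and not only the lattice geometry of $\cat$, the second assertion of the theorem remains unproved in your proposal.
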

\begin{proof}
\renewcommand{\qedsymbol}{}
The colored fan $\me$ has two maximal cones that are permuted by $\Gamma$.
Moreover, these maximal cones meet, so the colored fan $\me$ cannot be
quasi-projective. By Theorem \ref{criterion2} this proves the first point. Let
us denote by $\omega$ the closed orbit of $G(\mc)$ on $X$ corresponding to the
colored cone $(\cat,\{D_1\})$. Translating the open subset $X_{\omega,B}$ by
elements of $\Gamma$ and $G(\mc)$ one covers $X$, so in order to prove the
second point, one only has to see that $X_{\omega,B}$ is smooth. Denote by $P$
the stabilizer of $D_2$, or, in other words, the stabilizer of $l$ in $E$. By
Theorem \ref{strucloc}, there exists a Levi subgroup $L$ of $P$ and a closed
$L$-stable subvariety $S$ of $X_{\omega,B}$ containing $x_0$ and such that the
natural map 
$$R_u(P)\times S\rw X_{\omega,B}$$
is a $P$-equivariant isomorphism. By the following lemma, $S$ is isomorphic to
$\mc^3$, and thus smooth. Let $f_3$ be the following function on $X_0(\mc)$ :
$$f_3 : (p,\plan)\mapsto e_2^*(p).$$
\end{proof}
\begin{lemma}\label{lem}
The functions $f_2,f_1 f_2$ and $f_3 f_2$ are algebraically independent in
$\mc[S]$, and 
$$\mc[S]=\mc[f_2,f_1 f_2,f_3 f_2]$$
\end{lemma}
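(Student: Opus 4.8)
The plan is to use the local structure theorem (Theorem \ref{strucloc}) to reduce the computation of $\mc[S]$ to an explicit identification of $S$ with an affine subvariety, and then to exhibit $f_2, f_1 f_2, f_3 f_2$ as coordinate functions on it. By Theorem \ref{strucloc}, the open subset $X_{\omega,B}$ decomposes as $R_u(P)\times S$, where $P=\op{Stab}(l)$ and $L$ is a Levi subgroup. First I would recall the explicit description of the coordinate ring of the affine open chart:
$$\mc[X_{\omega,B}]=\{f\in\mc[\open],\quad \forall D\in\mb_{\omega}\quad \nu_D(f)\geqslant 0\}.$$
Here the colored cone attached to $\omega$ is $(\cat,\{D_1\})$, where $\cat$ is spanned by $\mu_2-\mu_3=\rho(\nu_{D_1})$ and $\mu_1-2\mu_2+\mu_3$. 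The set $\mb_{\omega}$ consists of the $G$-stable divisor whose valuation has image generating the extremal ray $\mq^+(\mu_1-2\mu_2+\mu_3)$ of $\cat$ together with $D_1$ (the color). So the condition defining $\mc[X_{\omega,B}]$ amounts to nonnegativity of $\nu_{D_1}$ and of the $G$-invariant valuation corresponding to the other ray.

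The key step is to translate these valuation inequalities into explicit monomial conditions on $B$-eigenfunctions. Since $\mx$ is freely generated by $-\chi_3$ and $\chi_1$, every $B$-eigenfunction is a Laurent monomial in $f_1$ (weight $-\chi_3$) and $f_2$ (weight $\chi_1$), and I would compute the values of $\nu_{D_1}$ and of the invariant valuation spanning the second ray on $f_1$ and $f_2$. The pairing of $\mu_1-2\mu_2+\mu_3$ against $-\chi_3$ and $\chi_1$ (using $\mu_i(\chi_j)=\delta_{ij}$) gives the weights of $f_1, f_2$ along that ray; combined with $\rho(\nu_{D_1})=\mu_2-\mu_3$ this pins down which monomials $f_1^a f_2^b$ extend regularly. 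I expect this to force precisely the semigroup generated by $f_2$, $f_1 f_2$, and the additional coordinate $f_3 f_2$. The function $f_3$, of weight $-\chi_2$, is introduced exactly to supply the third algebraically independent coordinate transverse to the two rays, reflecting that $S$ is three-dimensional (matching $\dim X_0=\dim G/\op{SL}_2=8-3=5$, with $\dim R_u(P)=2$).

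The main obstacle will be verifying the structure of $S$ via the Levi action rather than just computing $\mc[X_{\omega,B}]$: Theorem \ref{strucloc} only gives $S$ as an $L$-stable slice, so I must check that the three functions $f_2, f_1 f_2, f_3 f_2$ actually descend to (and generate) $\mc[S]$, i.e. that they are $R_u(P)$-invariant or, more precisely, that their restrictions to $S$ generate its coordinate ring and remain algebraically independent. Concretely I would argue that $S$ maps isomorphically onto $\spec\mc[f_2, f_1 f_2, f_3 f_2]$ by showing these three functions restrict to a coordinate system on the slice through $x_0$, using that the retraction $X_{\omega,B}\rw S$ along $R_u(P)$ does not alter the relevant $B$-weights. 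Once algebraic independence is established (e.g. by checking that their weights $\chi_1$, $\chi_1-\chi_3$, $\chi_1-\chi_2$ are linearly independent in $\mx(B)\otimes\mq$, which they are since $\chi_1,\chi_2,\chi_3$ span a rank-$2$ lattice but these three differences span it fully over $\mq$), the equality $\mc[S]=\mc[f_2, f_1 f_2, f_3 f_2]$ follows from the monomial description of $\mc[X_{\omega,B}]$ together with $S\simeq\mc^3$, completing the proof that $X_{\omega,B}\simeq R_u(P)\times\mc^3$ is smooth.
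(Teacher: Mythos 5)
Your valuative computation is on the right track as far as it goes: pairing the two rays of $\cat$ against a weight $\chi=a\chi_1-b\chi_3$ gives $b\geqslant 0$ and $a-b\geqslant 0$, i.e.\ the monoid $\mathbb{N}\chi_1+\mathbb{N}(\chi_1-\chi_3)$, whose eigenfunctions are the monomials $f_2^{m+n}f_1^{n}$. But this computation can never produce $f_3f_2$. Since $X_0$ is spherical, the $B$-eigenfunctions in $\ka$ are, up to scalar, exactly the Laurent monomials in $f_1$ and $f_2$; the function $f_3f_2$ is not one of them and is not a $B$-eigenfunction at all (only a torus-weight vector). This is the genuine gap: $\mc[S]$ is \emph{not} the algebra generated by its $B\cap L$-eigenfunctions, it is the $L$-\emph{module} generated by them, and the third generator enters only through the $L$-action. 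The paper's proof proceeds exactly there: $\mc[S]=\mc[X_{\omega,B}]^{R_u(P)}$ is the sub-$L$-module of $\mc[Lx_0]$ generated by the $B\cap L$-eigenfunctions of the weights above; multiplicity-freeness of $\mc[Lx_0]$ (since $Lx_0$ is $L$-spherical) yields $\mc[S]=\bigoplus_{(m,n)}\langle L f_2^{m+n}f_1^n\rangle$; and the companion Lemma \ref{lemm} computes $\langle Lf_1^n\rangle$ as having basis $f_1^n,f_1^{n-1}f_3,\dots,f_3^n$, because $e_2^*,e_3^*$ span $\langle Le_3^*\rangle$. That is the mechanism by which $f_3$ appears; your proposal has none.

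Moreover, your criterion for algebraic independence cannot work: the lattice $\mx(B)=(\mz\chi_1\oplus\mz\chi_2\oplus\mz\chi_3)/\mz(\chi_1+\chi_2+\chi_3)$ has rank $2$, so the three weights $\chi_1$, $\chi_1-\chi_3$, $\chi_1-\chi_2$ can never be linearly independent in $\mx(B)\otimes\mq$; indeed $(\chi_1-\chi_3)+(\chi_1-\chi_2)=3\chi_1$. Worse, if $f_3f_2$ really were a $B$-eigenfunction of weight $\chi_1-\chi_2$, then $(f_1f_2)(f_3f_2)$ and $f_2^3$ would be $B$-eigenfunctions of the same weight $3\chi_1$, hence proportional by multiplicity-freeness, i.e.\ $f_1f_3$ would be a multiple of $f_2$ --- contradicting the very independence the lemma asserts. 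So the independence has to be obtained differently: in the paper it falls out of the direct-sum decomposition above, the monomials $f_2^{m+n}f_1^{n-i}f_3^{i}$ forming a linear basis of $\mc[S]$, with the linear independence of $f_1,f_3$ (and of their powers) checked by hand in Lemma \ref{lemm}.
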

\begin{proof}
 Observe that :
 $$\mc[S]=\mc[X_{\omega,B}]^{R_u(P)}$$ so that the algebra
$\mc[S]$ is the sub-$L$-module of $\mc[Lx_0]$ generated by the $B\cap
L$-eigenfunctions in $\mc(Lx_0)$ of weight $\chi$ satisfying 
$$\langle \mu_2-\mu_3,\chi\rangle\geqslant 0,\quad \langle\mu_1-2\mu_2+\mu_3,\chi\rangle\geqslant 0.$$
In other words, $\chi$ belongs to the monoid generated by $\chi_1$ and
$\chi_1-\chi_3$. The functions $f_2$ and $f_1 f_2$ are $B\cap L$-eigenfunctions
of respective weights $\chi_1$ and $\chi_1-\chi_3$. Moreover, the $L$-module
$\mc[Lx_0]$ is multiplicity-free, because the homogeneous space $Lx_0$ is
spherical. One deduces that
$$\mc[S]=\bigoplus_{(m,n)\in\mathbb{N}^2}\langle L f_2^{m+n}f_1^n\rangle$$
Using the fact that $f_2$ is a $L$-eigenfunction, the following lemma enables
us to conclude.
\renewcommand{\qedsymbol}{}
\end{proof}
\begin{lemma}\label{lemm}
Let $n\in\mathbb{N}$. A basis of the linear span of $L$-translates of $f_1^n$
in $\mc[Lx_0]$ is given by 
 $$f_1^n,f_1^{n-1}f_3,...,f_1 f_3^{n-1},f_3^n.$$
\end{lemma}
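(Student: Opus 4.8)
The plan is to make the action of $L$ on $Lx_0$ fully explicit and thereby reduce the statement to a one-variable interpolation. Since $P=\op{Stab}(l)$, I take for $L$ the stabiliser of the splitting $E=\langle e_1\rangle\oplus\langle e_2,e_3\rangle$, so that $L\cong S(GL_1\times GL_2)$ acts by a scalar $a$ on $\langle e_1\rangle$ and by some $h\in GL(\langle e_2,e_3\rangle)$ with $a\det(h)=1$; this Levi contains $B\cap L$ and is compatible with $x_0$. A direct computation gives $g\cdot x_0=(a e_1+h(e_3),\plan_0)$ for $g\in L$, so that $Lx_0=\{(a e_1+c e_2+d e_3,\plan_0):a\neq 0,\ (c,d)\neq(0,0)\}$, and on this orbit $f_1=e_3^*(p)=d$ and $f_3=e_2^*(p)=c$ are two of the three coordinate functions. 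In particular $f_1$ and $f_3$ are algebraically independent, so the $n+1$ monomials $f_1^{n-i}f_3^i$ $(0\le i\le n)$ are automatically linearly independent, and only the spanning statement remains.

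For the inclusion $\langle L f_1^n\rangle\subseteq\langle f_1^n,\dots,f_3^n\rangle$ I would observe that $\langle e_2^*,e_3^*\rangle\subseteq E^*$ is the annihilator of $\langle e_1\rangle$, hence $L$-stable because $L$ preserves $\langle e_1\rangle$; restricting functions to $Lx_0$ this says that $\langle f_1,f_3\rangle$ is a two-dimensional $L$-submodule of $\mc[Lx_0]$. As $L$ acts by algebra automorphisms, $g\cdot f_1^n=(g\cdot f_1)^n$ lies in the degree-$n$ piece $\op{Sym}^n\langle f_1,f_3\rangle=\langle f_1^{n-i}f_3^i:0\le i\le n\rangle$ for every $g\in L$.

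For the reverse inclusion I would avoid appealing to the irreducibility of $\op{Sym}^n$ of the standard representation and argue by interpolation instead. Taking the one-parameter unipotent subgroup $g_t\in L$ acting trivially on $e_1$ and by $e_2\mapsto e_2+te_3$, $e_3\mapsto e_3$, one computes $g_t\cdot f_1=f_1-tf_3$, and therefore $g_t\cdot f_1^n=\sum_{i=0}^n\binom{n}{i}(-t)^i f_1^{n-i}f_3^i$. Evaluating at $n+1$ distinct values of $t$ yields a Vandermonde-type system whose invertibility expresses each $f_1^{n-i}f_3^i$ as a linear combination of translates of $f_1^n$; hence $\langle L f_1^n\rangle$ is exactly $\langle f_1^n,\dots,f_3^n\rangle$. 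Together with the linear independence noted above, this gives the asserted basis.

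The only delicate point is the bookkeeping of the first step: correctly identifying $L$ and $Lx_0$, checking that $f_1,f_3$ restrict to the coordinates $d,c$, and verifying the transformation law $g_t\cdot f_1=f_1-tf_3$. Once these are in place, the representation-theoretic content reduces to the elementary interpolation above.
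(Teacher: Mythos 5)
Your proof is correct and takes essentially the same route as the paper: both arguments identify $\langle f_1,f_3\rangle$ with the two-dimensional $L$-module given by the annihilator of $l$ in $E^*$ (spanned by $e_2^*,e_3^*$), and both obtain linear independence of the monomials from the algebraic independence of these coordinate functions on the orbit. The only place you go beyond the paper is the spanning step, which the paper compresses into the assertion that the monomials generate $\langle Lf_1^n\rangle$ by the case $n=1$; your one-parameter unipotent subgroup $g_t$ together with the Vandermonde interpolation supplies exactly that missing detail, so this is an expansion of the same argument rather than a different one.
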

\begin{proof} We first prove the case $n=1$. Observe that
$l^{\perp}\subseteq V^*$ is generated by $e_2^*$ and $e_3^*$. Then one easily
checks that $e_2^*,e_3^*$ is a basis of $\langle Le_3^*\rangle$. Moreover, if
there exist $\lambda_1,\lambda_3\in \mc^2$ such that 
$$\lambda_1 f_1+\lambda_3 f_3=0$$
in $\mc[S]$, then the same is true in $\mc[X_{\omega,B}]$. The linear forms
$e_2^*$ and $e_3^*$ are thus linearly dependent in $V^*$, which is a
contradiction. Thus $f_1$ and $f_3$ are linearly independent in $\mc[S]$,
completing the proof in the case $n=1$. The same argument proves that
$f_1^n,f_1^{n-1}f_3,...,f_1 f_3^{n-1},f_3^n$ are linearly independent in
$\mc[S]$. By the case $n=1$, they also generate $\langle Lf_1^n\rangle$.
\end{proof}
\begin{rem}
The embedding $X$ gives an example of a spherical variety containing two points
which do not lie on a common affine open subset. Indeed, there are two closed
orbits of $\Gkb$ on $X$. If the point $y$ belongs to the first orbit and $z$ to
the second, then $y$ and $z$ cannot lie on a common affine open subset, because
of Proposition \ref{Su}. Moreover, in view of Theorem \ref{nice}, this example
has minimal rank.
\end{rem}

\begin{rem}
With a little more work, one can compactify the previous example, and thus
obtains a complete smooth embedding $X$ of $X_0$ with $\Gamma$-stable fan, and
with no $\mr$-form.
\end{rem}

\bibliographystyle{alpha}
\bibliography{ma}

\end{document}